\numberwithin{equation}{section}
\theoremstyle{plain}
\newtheorem{prop}{Proposition}[section]
\newtheorem{coro}[prop]{Corollary}
\newtheorem{fact}[prop]{Fact}
\newtheorem{lemm}[prop]{Lemma}
\newtheorem{ques}[prop]{Question}
\newtheorem*{propA}{Proposition}
\theoremstyle{definition}
\newtheorem{defi}[prop]{Definition}
\newtheorem{rema}[prop]{Remark}
\newtheorem{exam}[prop]{Example}
\renewcommand\aa{a}
\newcommand\bb{b}
\newcommand\BP[1]{B_{#1}^{\scriptscriptstyle\pmb+}}
\newcommand\BPr[1]{\widetilde{B}_{#1}^{\scriptscriptstyle\pmb+}}
\newcommand\card[1]{\mathtt{\#}#1}
\newcommand\cc{c}
\newcommand\CC{C}
\newcommand\Cond{\diamondsuit}
\newcommand\Conds[1]{\diamondsuit^*_{\HS{-0.3}#1}}
\newcommand\dd{d}
\newcommand\distR{\mathrm{dist}_\RRR}
\newcommand\divR{\mathrel{\prec_{\scriptscriptstyle\HS{-0.3}R}}}
\newcommand\divL{\mathrel{\prec_{\scriptscriptstyle\HS{-0.3}L}}}
\newcommand\ee{e}
\newcommand\eqR{\equiv_\RRR}
\newcommand\ew{\varepsilon}
\newcommand\ff{f}
\let\ge=\geqslant
\renewcommand\gg{g}
\newcommand\hh{h}
\newcommand\HS[1]{\leavevmode\null\hspace{#1mm}}
\newcommand\ie{{\it i.e.}}
\newcommand\ii{i}
\newcounter{ITEM}
\newcommand\ITEM[1]{\setcounter{ITEM}{#1}\leavevmode\hbox{\rm(\roman{ITEM})}}
\newcommand\jj{j}
\newcommand\kk{k}
\let\le=\leqslant
\newcommand\MM{M}
\newcommand\MON[2]{\langle#1\,\vert\,\nobreak#2\rangle^{\scriptscriptstyle+}}
\newcommand\multR{\mathrel{\succ_{\scriptscriptstyle\HS{-0.6}R}}}
\newcommand\nn{n}
\newcommand\Ord{\mathbf{Ord}}
\newcommand\pdots{\mathbin{{\cdot}{\cdot}{\cdot}\HS{0.5}}}
\newcommand\pp{p}
\newcommand\qq{q}
\newcommand\rev{\curvearrowright}
\newcommand\revR{\rev_\RRR}
\newcommand\rr{r}
\newcommand\RRR{\mathcal{R}}
\newcommand\sig[1]{\sigma_{\!#1}^{\relax}}
\newcommand\sigg[2]{\sigma_{\!#1}^{\HS{-0.2}(\HS{-0.2}#2\HS{-0.2})\HS{-0.2}}}
\renewcommand\ss{s}
\newcommand\SSS{\mathcal{S}}
\newcommand\SSSb{\overline\SSS}
\newcommand\SSSs{\SSS^*}
\newcommand\tta{\mathtt{a}}
\newcommand\ttb{\mathtt{b}}
\newcommand\ttc{\mathtt{c}}
\newcommand\ttd{\mathtt{d}}
\newcommand\ttt{t}
\newcommand\uu{u}
\def\VR(#1,#2){\vrule width0pt height#1mm depth#2mm}
\newcommand\vv{v}
\newcommand\wdots{, ..., }
\newcommand\weq{\equiv}
\newcommand\ww{w}
\newcommand\xx{x}
\newcommand\yy{y}
\begin{document}

\author{Patrick DEHORNOY}

\address{Laboratoire de Math\'ematiques Nicolas Oresme, UMR 6139 CNRS, Universit\'e de Caen, 14032 Caen, France}
\email{patrick.dehornoy@unicaen.fr}
\urladdr{dehornoy.users.lmno.cnrs.fr}

\title{A cancellativity criterion for presented monoids}

\keywords{semigroup presentation, van Kampen diagram, rewrite system, cancellativity, word problem, Garside monoid, group of fractions, monoid embeddability, Artin--Tits groups}

\subjclass{20M05, 20M12, 20F36}

\begin{abstract}
We establish a new, fairly general cancellativity criterion for a presented monoid that properly extends the previously known related criteria. It is based on a new version of the word transformation called factor reversing, and its specificity is to avoid any restriction on the number of relations in the presentation. As an application, we deduce the cancellativity of some natural extension of Artin's braid monoid in which crossings are colored.
\end{abstract}

\maketitle

Establishing that a presented monoid (or semigroup) is cancellative is in general a nontrivial task, for which not so many methods are known~\cite[sec.\,5.3]{Hig}. If a distinguished expression (``normal form'') has been identified for each element of the monoid, and if, for each element~$\aa$ of the considered monoid~$\MM$ and every generator~$\ss$ of the considered presentation, the normal form of~$\aa$ can be retrieved from that of~$\ss\aa$ and~$\ss$, then one can indeed conclude that $\ss\aa = \ss\bb$ implies $\aa = \bb$. But, when no normal form is known, no generic method is available. Adjan's criterion based on the left graph~\cite{Adj, Rem} is useful, but, by definition, it applies only to presentations with (very) few defining relations. Ultimately relying on Garside's analysis of the braid monoids~$\BP\nn$~\cite{Gar}, the so-called reversing method~\cite{Dff, Dia} provides a simple criterion, which proved to be useful for many concrete presentations, typically those of all Artin--Tits monoids. However, an intrinsic limitation of the method is that it only applies to monoid presentations~$(\SSS, \RRR)$ that contain a limited number of relations, namely those such that, for all~$\ss, \ttt$ in~$\SSS$, there exists at most one relation of the form $\ss ... \,{=}\, \ttt ...$ in~$\RRR$ (``right-complemented'' presentations). The aim of this paper is to extend the previous criterion by developing a new approach that requires no limitation on the number of defining relations. The result we prove takes the following form:

\begin{propA}
Assume that $(\SSS, \RRR)$ is a monoid presentation such that 

\ITEM1 there exists an $\eqR$-invariant map~$\lambda$ from~$\SSSs$ to ordinals satisfying $\lambda(\ss\ww) >\nobreak \lambda(\ww)$ for all~$\ss$ in~$\SSS$ and~$\ww$ in~$\SSSs$, and

\ITEM2 for every~$\ss$ in~$\SSS$, for every relation~$\ww \,{=}\, \ww'$ in~$\RRR$, and for every $(\SSS, \RRR)$-grid from~$(\ss, \ww)$, there exists an equivalent grid from~$(\ss, \ww')$, and vice versa, and

\ITEM3 there is no relation~$\ss\ww \,{=}\, \ss\ww'$ in~$\RRR$ with $\ww, \ww'$ distinct. 

\noindent Then the monoid associated with~$(\SSS, \RRR)$ admits left cancellation.
\end{propA}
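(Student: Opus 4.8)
The plan is to reduce left cancellation to a completeness property of grids and then prove that property by induction. Throughout, write $\eqR$ for the congruence on $\SSSs$ generated by $\RRR$ and $\ee$ for the empty word, and I shall use three features of $(\SSS,\RRR)$-grids that do not depend on~(i)--(iii). First, \emph{soundness}: a grid from $(\uu,\vv)$ with right output $\gg$ and bottom output $\hh$ satisfies $\uu\gg \eqR \vv\hh$, so that a grid from $(\uu,\vv)$ with both outputs $\ee$ witnesses $\uu \eqR \vv$; moreover equivalent grids have $\eqR$-equal outputs. Second, \emph{existence}: every pair of words is the source of a grid --- this is the one place~(i) is used, its ordinal rank making the stepwise completion of a grid terminate; note also that~(i) forces $\lambda(\ww) > \lambda(\ee)$ for every nonempty~$\ww$, so that no nonempty word is $\eqR$-equivalent to~$\ee$. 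Third, \emph{surgery}: a grid from $(\uu_1\uu_2,\vv)$ splits along the line separating the $\uu_1$- and $\uu_2$-columns into a grid from $(\uu_1,\vv)$ and a grid from $(\uu_2,\vv_1)$, where $\vv_1$ is the word read on that line; compatible grids glue back; and reflecting a grid across its main diagonal sends a grid from $(\uu,\vv)$ to one from $(\vv,\uu)$ with the outputs exchanged --- all symmetrically in the two coordinates.

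\textbf{Reduction.} Suppose $\ss\aa \eqR \ss\bb$ with $\ss \in \SSS$. It suffices to exhibit a grid from $(\ss\aa,\ss\bb)$ whose two outputs are~$\ee$. Indeed, in any grid from $(\ss\aa,\ss\bb)$ the top-left cell treats the pair $(\ss,\ss)$, and, since by~(iii) there is no relation of the form $\ss\cdots = \ss\cdots$, that cell is forced to be the trivial one; the rest of the first row and of the first column then merely carry $\aa$ and $\bb$ unchanged, so that deleting them exhibits a grid from $(\aa,\bb)$ with the same two outputs. If these are~$\ee$, soundness gives $\aa \eqR \bb$.

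\textbf{Completeness via a transport lemma.} So everything reduces to: if $\uu \eqR \vv$ then some grid from $(\uu,\vv)$ has both outputs~$\ee$. Granting the \emph{transport lemma} --- if $\uu \eqR \vv$ then, for every word $\cc$, every grid from $(\cc,\uu)$ is equivalent to some grid from $(\cc,\vv)$, and conversely --- this follows at once: apply the lemma with first coordinate $\vv$ to the ``diagonal'' grid from $(\vv,\vv)$, which is built from trivial cells and hence has empty outputs, obtain an equivalent grid from $(\vv,\uu)$, which by the unit remark above again has empty outputs, and reflect it across the diagonal. I would prove the transport lemma by a nested induction: the outer induction on a rank extracted from~$\lambda$ (legitimate because $\lambda$ is $\eqR$-invariant, so $\lambda(\uu) = \lambda(\vv)$, and because consuming a letter strictly decreases~$\lambda$ by~(i)), the inner induction on the length of a derivation of $\uu \eqR \vv$ over~$\RRR$. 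The inner induction reduces matters to one rewriting step, $\uu = \pp\ww\qq$ and $\vv = \pp\ww'\qq$ for a relation $\ww = \ww'$ of~$\RRR$. Decomposing a grid from $(\cc,\pp\ww\qq)$ first along the second coordinate, then along the first coordinate of the resulting middle block, isolates a strip that is a grid from $(\ss,\ww)$ with $\ss\in\SSS$; to this strip hypothesis~(ii) applies verbatim and substitutes an equivalent grid from $(\ss,\ww')$; regluing yields a grid from $(\cc,\pp\ww'\qq) = (\cc,\vv)$ --- except that equivalence only pins the substituted strip's outputs down up to~$\eqR$, so the words carried into the adjacent strips are nudged to $\eqR$-equivalent ones, and it is precisely the outer induction that then reconciles this, those adjacent strips involving only words of strictly smaller $\lambda$-rank.

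\textbf{Main obstacle.} The routine part is the diagram bookkeeping: soundness, the trivial-cell and diagonal-grid facts, and the decomposition/regluing, into which~(ii) and~(iii) slot at the two indicated points while~(i) enters only through termination and the unit remark. The genuinely delicate part --- where I expect most of the paper's technical effort --- is formulating and closing the induction in the transport lemma: because~(ii) controls grid outputs only modulo~$\eqR$, each use of it triggers a cascade of further $\eqR$-adjustments downstream in the grid, and one must isolate an inductive invariant, measured by~$\lambda$, that both survives every grid surgery performed and strictly decreases along that cascade. Once that statement is set up correctly, the rest is assembly.
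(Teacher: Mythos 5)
Your architecture is the paper's: reduce left cancellation to completeness of reversing via the four-square decomposition of a grid from $(\ss\uu,\ss\vv)$ (with (iii) forcing the top-left cell to be trivial), obtain completeness by transporting the diagonal grid, and prove the transport lemma by a nested induction in which (ii) handles an isolated $(\ss,\ww)$ strip and an induction hypothesis repairs the adjacent blocks. All of that is correct and matches the paper. But the one point you defer --- the inductive invariant --- is the actual content of the theorem, and the hints you give about it point the wrong way, so this is a genuine gap rather than a detail of assembly. The rank cannot be extracted from the source words: after splitting off the $(\ss,\ww)$ strip, the induction hypothesis must be applied to the block below it, whose source is $(\uu_3,\vv_6)$ with $\vv_6$ the \emph{bottom output} of that strip, and to the block to its right, whose source involves the strip's \emph{right output}; these interior words can have $\lambda$-value far exceeding that of the original $\uu$ and $\vv$ (already for braids, reversing lengthens words), so ``consuming a letter strictly decreases $\lambda$'' yields no decrease for these sub-grids. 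The invariant that works is the $\lambda$-value of the grid's \emph{diagonal}, i.e.\ $\lambda(\uu\vv_1)$ for a grid from $(\uu,\vv)$ to $(\uu_1,\vv_1)$ (well defined on equivalent grids, since $\uu\vv_1\eqR\vv\uu_1$ and $\lambda$ is $\eqR$-invariant); it strictly drops on each adjacent block because, up to $\eqR$, the word $\uu\vv_1$ factors as a nonempty prefix, followed by that block's diagonal, followed by a suffix. Even this check needs more than (i): the suffix step uses $\lambda(\xx\yy)\ge\lambda(\xx)$ for an arbitrary word $\yy$ appended on the right, which (i) (prepending a letter increases $\lambda$) does not give. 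So a preliminary lemma is required, replacing the witness of (i) by an $\eqR$-invariant map satisfying $\lambda(\xx\yy)\ge\lambda(\yy)+\lambda(\xx)$; this is obtained from the canonical ordinal rank of proper right-divisibility and a superadditivity argument by ordinal induction --- a nontrivial step your sketch does not anticipate.

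One smaller correction: your ``existence'' feature is false, and is not what (i) is for. Condition (i) does not make every pair of words the source of a grid; in the colored braid monoid of Section 2 the presentation is homogeneous, so (i) holds, yet there is no grid from $(\sigg1\aa,\sigg1\bb)$ for $\aa\ne\bb$. Fortunately your argument never uses this: the only grid you need to exist outright is the diagonal grid from $(\vv,\vv)$, built from trivial cells, and every other grid you manipulate is given to you. The real role of (i) is to supply the induction measure described above.
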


In the above statement, $\eqR$ refers to the congruence on the free monoid~$\SSSs$ generated by the relations of~$\RRR$, and an $(\SSS, \RRR)$-grid is a certain type of rectangular van Kampen diagram specified in Definition~\ref{D:Grid} below. Note that Condition~\ITEM1 in the above statement is trivial when each relation in~$\RRR$ consists of two words with the same length (``homogeneous presentation''), since, in that case, $\lambda(\ww)$ can be taken to be the length of~$\ww$. 

As an application, we deduce:

\begin{propA} 
For every~$\nn$ and every nonempty set~$\CC$, the monoid
\begin{equation*}
\BP{\nn, \CC}:= \bigg\langle \sigg\ii\aa ( \ii \le \nn, \aa \in \CC ) \ \bigg\vert\ 
\begin{matrix}
\sigg\ii\aa \sigg\jj\bb = \sigg\jj\bb \sigg\ii\aa 
&\text{for} &\vert i-j \vert\ge 2\\
\sigg\ii\aa \sigg\jj\bb \sigg\ii\cc = \sigg\jj\cc \sigg\ii\bb \sigg\jj\aa 
&\text{for} &\vert i-j \vert = 1
\end{matrix}
\ \bigg\rangle^{\!+}
\end{equation*}
is left and right cancellative.
\end{propA}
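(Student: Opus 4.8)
The plan is to deduce left cancellativity of~$\BP{\nn,\CC}$ from Proposition~A, applied to the displayed presentation~$(\SSS,\RRR)$, and to obtain right cancellativity afterwards from a symmetry of~$\RRR$. Conditions~\ITEM1 and~\ITEM3 demand no work. Both the commutation relations and the colored braid relations are length-balanced ($2\,{=}\,2$ and $3\,{=}\,3$ letters), so $(\SSS,\RRR)$ is homogeneous and, as observed after the statement of Proposition~A, Condition~\ITEM1 holds with~$\lambda(\ww)$ the length of~$\ww$. For Condition~\ITEM3, the two sides of a commutation relation begin with~$\sigg\ii\aa$ and~$\sigg\jj\bb$, which differ since~$\vert i-j\vert\ge2$, while the two sides of a colored braid relation begin with~$\sigg\ii\aa$ and~$\sigg\jj\cc$, which differ since~$\vert i-j\vert=1$; hence no relation of~$\RRR$ has the form~$\ss\ww\,{=}\,\ss\ww'$.

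The substance is Condition~\ITEM2, and I would begin by recording the elementary cells: for two generators~$\ss=\sigg\kk\dd$ and~$\ttt=\sigg l\ee$, the relations of~$\RRR$ whose two sides begin respectively with~$\ss$ and with~$\ttt$ fall into the cases: none if~$k=l$ and~$\dd\ne\ee$; just the cancellation of the two letters if~$\ss=\ttt$; the single commutation relation~$\ss\ttt\,{=}\,\ttt\ss$ if~$\vert k-l\vert\ge2$; and the whole family $\{\,\sigg\kk\dd\,\sigg l\beta\,\sigg\kk\ee\,{=}\,\sigg l\ee\,\sigg\kk\beta\,\sigg l\dd\ :\ \beta\in\CC\,\}$ if~$\vert k-l\vert=1$. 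This last, $\card\CC$-fold family, in which the middle color~$\beta$ is unconstrained, is precisely the non right-complemented feature that Proposition~A is designed to accommodate, so for~$\card\CC\ge2$ there is genuine content to verify.

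Given~$\ss=\sigg\kk\dd$ and a relation~$\ww\,{=}\,\ww'$ of~$\RRR$, I would verify Condition~\ITEM2 by a case analysis on the type of~$\ww\,{=}\,\ww'$ and on the position of~$k$ relative to the indices occurring in it. Because~$\ss$ has length~$1$ and~$\ww,\ww'$ have length at most~$3$, every $(\SSS,\RRR)$-grid from~$(\ss,\ww)$ is finite and of bounded size, so only finitely many configurations occur. When~$k$ is neither equal nor adjacent to any index of~$\ww\,{=}\,\ww'$, every cell is a commutation cell and every grid from~$(\ss,\ww)$ is readily matched by an equivalent grid from~$(\ss,\ww')$, and conversely; when~$k$ equals such an index, the reversing of~$\ss$ starts with a cancellation or a commutation and the matching is again immediate. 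The substantial configurations are those in which at least one braid cell enters the grid. In each of them I would pass to the color-forgetting image, which is a grid over the presentation of a classical braid monoid (the $\card\CC=1$ case), and invoke the fact---due to Garside~\cite{Gar} and lying at the root of the reversing method~\cite{Dff, Dia}---that grids over that presentation match on the two sides of each braid or commutation relation. It then remains to see that the colors travel through compatibly: the $\card\CC$ choices available at the braid cells of a grid from~$(\ss,\ww)$ must be matched by choices at the braid cells of a grid from~$(\ss,\ww')$ so that the two grids have the same output boundary, every remaining color being forced once these choices and the boundary colors~$\dd,\aa,\bb,\cc$ are fixed. Carrying this out for each of the finitely many positions of~$k$, and in both directions, establishes Condition~\ITEM2, and then Proposition~A gives left cancellativity.

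For right cancellativity, observe that~$\RRR$ is stable under taking the mirror image of each relation (reading its two words from right to left): a commutation relation is symmetric, and the mirror image of~$\sigg\ii\aa\,\sigg\jj\bb\,\sigg\ii\cc\,{=}\,\sigg\jj\cc\,\sigg\ii\bb\,\sigg\jj\aa$ is~$\sigg\ii\cc\,\sigg\jj\bb\,\sigg\ii\aa\,{=}\,\sigg\jj\aa\,\sigg\ii\bb\,\sigg\jj\cc$, again a colored braid relation, with~$\aa$ and~$\cc$ interchanged and~$\aa,\bb,\cc$ still ranging over all of~$\CC$. Hence the opposite monoid of~$\BP{\nn,\CC}$ is presented by the same pair~$(\SSS,\RRR)$, so it is isomorphic to~$\BP{\nn,\CC}$; being left cancellative by the above, $\BP{\nn,\CC}$ is therefore right cancellative as well. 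I expect the genuine difficulty to sit in the colored braid sub-case of Condition~\ITEM2 with~$k$ adjacent to an index of the relation: there one must control grids that may contain several braid cells with independently chosen middle colors, and show that every resulting choice pattern on one side of the relation is matched, up to the equivalence of grids, by a choice pattern on the other side---a bounded but delicate verification that runs parallel to the classical cube condition for the braid monoid.
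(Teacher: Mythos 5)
Your overall plan coincides with the paper's: check that the presentation is homogeneous (so Condition~\ITEM1 holds with $\lambda$ the length), observe that no relation has the form $\ss\ww \,{=}\, \ss\ww'$, reduce everything to Condition~\ITEM2, and obtain right cancellativity from the mirror symmetry of the relations via the anti-automorphism induced by the identity on generators. The first, third and last of these steps are correct and are exactly what the paper does.

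The gap is that Condition~\ITEM2 --- which is the entire substance of the result --- is not actually verified, and the method you propose for the critical cases would not deliver it. First, the ``color-forgetting'' reduction to Garside's analysis of $\BP\nn$ cannot carry the argument: the restricted presentation~$\BPr{\nn,\CC}$ of Definition~\ref{D:BGL} also color-forgets to the classical braid presentation, yet Fact~\ref{F:BGL} shows that precisely this condition \emph{fails} for it. So the existence of a matching uncolored grid plus the assertion that ``the colors travel through compatibly'' is exactly the point that can break down and must be checked by explicit computation; it is not a consequence of the uncolored case. Second, your criterion for matching --- choosing the middle colors ``so that the two grids have the same output boundary'' --- is too strong and also not what is needed: equivalence of grids only requires the boundary words to be $\eqR$-equivalent, and in both critical configurations of the paper (Lemmas~\ref{L:Case1} and~\ref{L:Case2}) the matching grids do \emph{not} have equal outputs; establishing their equivalence requires explicit multi-step derivations such as
$$\sigg3\dd \sigg2\ff \sigg1\cc \sigg3\ee \sigg2\bb \eqR \cdots \eqR \sigg2\ee \sigg1\bb \sigg3\ff \sigg2\cc \sigg1\dd,$$
together with an exhaustive enumeration showing that the displayed families (parametrized by the free middle colors $\ee,\ff$) are the \emph{only} grids from the given sources. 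You correctly identify where the difficulty sits (the adjacent-index braid case, and the length-$2$ commutation relation reversed against an in-between generator), but you defer the verification rather than perform it, so the proof is incomplete at its decisive step.
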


The monoid~$\BP{\nn, \CC}$ is an extension of Artin's monoid $\BP\nn$ of positive $\nn$-strand braids, and it is a typical example of a monoid that is inaccessible to all previously known methods. We shall see that the elements of~$\BP{\nn, \CC}$ admit a natural interpretation in terms of braids with $\CC$-colored crossings, and both $\BP{\nn, \CC}$ and its universal group might be structures of independent interest. They are in particular directly reminiscent of (but not identical to) the monoids investigated in~\cite{BGL}.

\section{Using reversing grids}\label{S:Criterion}

As in~\cite{Dia}, our cancellativity criterion is based on some word transformation called factor (or subword) reversing, and on a criterion for establishing that the latter is complete, meaning that it detects every word equivalence with respect to the presentation. The specificity of the current paper is to extend the framework so as to avoid any restriction on the number of relations in the presentation. This is done by introducing the new notion of a reversing grid (Section~\ref{SS:Grid}), then establishing a convenient completeness criterion (Section~\ref{SS:Complete}), and finally deducing the expected cancellativity criterion and various other consequences (Section~\ref{SS:Cancel}).

\subsection{The notion of a reversing grid}\label{SS:Grid}

If~$\SSS$ is a nonempty set, we denote by~$\SSSs$ the free monoid of all words in~$\SSS$, and use~$\ew$ for the empty word. A \emph{monoid presentation} is a pair~$(\SSS, \RRR)$, where $\RRR$ is a list of (unordered) pairs of words of~$\SSSs$; as usual, we write relations with an equality sign, thus writing $\ww \,{=}\, \ww'$ for~$\{\ww, \ww'\}$. We then denote by~$\MON\SSS\RRR$ the monoid presented by~$(\SSS, \RRR)$, that is, the monoid~$\SSSs{/}{\eqR}$, where $\eqR$ is the congruence on~$\SSSs$ generated by~$\RRR$. To avoid any confusion due to using~$=$ in relations of the presentation, we shall use~$\weq$ for word equality. 

 A relation of the form~$\ww \,{=}\, \ew$ with~$\ww$ nonempty will be called an \emph{$\ew$-relation}. In the sequel, we shall only address monoid presentations~$(\SSS, \RRR)$ that contain no $\ew$-rel\-ation. In this case, the only invertible element of the monoid~$\MON\SSS\RRR$ is the unit~$1$, represented by the empty word. Note that every such presentation also defines a semigroup and, in fact, most statements of this paper can be adapted to a semigroup context. However, the overall philosophy here is really that of monoids, and it seems more natural to stick to a monoid context. 

Our main subject of investigation is a certain binary relation (or rewrite system) on~$\SSS \times \SSS$ associated with~$(\SSS, \RRR)$ as follows.

\begin{defi}\label{D:Grid}
If $(\SSS, \RRR)$ is a monoid presentation, an \emph{$(\SSS, \RRR)$-grid} is a rectangular diagram consisting of finitely many matching $\SSS \cup \{\ew\}$-labeled pieces of the types

\VR(12,9)\begin{picture}(34,0)(-3,6)
\pcline{->}(1,13)(21,13)\taput{$\ttt$}
\pcline{->}(0,12)(0,1)\tlput{$\ss$}
\pcline{->}(1,0)(7,0)\taput{$\ttt_1$}
\pcline[style=etc](8,0)(14,0)
\pcline{->}(15,0)(21,0)\taput{$\ttt_\qq$}
\pcline{->}(22,12)(22,8)\trput{$\ss_1$}
\pcline[style=etc](22,7)(22,6)
\pcline{->}(22,5)(22,1)\trput{$\ss_\pp$}
\end{picture} 
\parbox{90mm}{with $\ss, \ttt, \ss_1 \wdots \ss_\pp, \ttt_1 \wdots \ttt_\qq$ in~$\SSS$\\ \null\hspace{20mm}and $\ss\ttt_1 {\pdots} \ttt_\qq = \ttt \ss_1 {\pdots} \ss_\pp$ a relation of~$\RRR$,} 

\VR(8,8)\begin{picture}(16,0)(-3,4)
\pcline{->}(1,10)(9,10)\taput{$\ss$}
\pcline{->}(0,9)(0,1)\tlput{$\ss$}
\pcline{->}(1,0)(9,0)\tbput{$\ew$}
\pcline{->}(10,9)(10,1)\trput{$\ew$}
\end{picture}, \quad
\begin{picture}(16,0)(-3,4)
\pcline{->}(1,10)(9,10)\taput{$\ew$}
\pcline{->}(0,9)(0,1)\tlput{$\ss$}
\pcline{->}(1,0)(9,0)\tbput{$\ew$}
\pcline{->}(10,9)(10,1)\trput{$\ss$}
\end{picture}, \quad
\begin{picture}(16,0)(-3,4)
\pcline{->}(1,10)(9,10)\taput{$\ttt$}
\pcline{->}(0,9)(0,1)\tlput{$\ew$}
\pcline{->}(1,0)(9,0)\tbput{$\ttt$}
\pcline{->}(10,9)(10,1)\trput{$\ew$}
\end{picture}, \quad
\begin{picture}(16,0)(-3,4)
\pcline{->}(1,10)(9,10)\taput{$\ew$}
\pcline{->}(0,9)(0,1)\tlput{$\ew$}
\pcline{->}(1,0)(9,0)\tbput{$\ew$}
\pcline{->}(10,9)(10,1)\trput{$\ew$}
\end{picture}\quad
with $\ss, \ttt$ in~$\SSS$.

\noindent For $\uu, \vv, \uu_1, \vv_1$ in~$\SSSs$, we say that an $(\SSS, \RRR)$-grid~$\Gamma$ goes \emph{from~$(\uu, \vv)$ to~$(\uu_1, \vv_1)$} or, equivalently, that $(\uu, \vv)$ is the \emph{source} of~$\Gamma$ and that $(\uu_1, \vv_1)$ is its \emph{target}, if the labels of the left and top edges of~$\Gamma$ form the words~$\uu$ and~$\vv$, respectively, whereas the labels of the right and bottom edges form the words~$\uu_1$ and~$\vv_1$. If there exists an $(\SSS, \RRR)$-grid from~$(\uu, \vv)$ to~$(\uu_1, \vv_1)$, we say that $(\uu, \vv)$ is \emph{right $\RRR$-reversible} to~$(\uu_1, \vv_1)$, written $(\uu, \vv) \rev_\RRR (\uu_1, \vv_1)$ or, in a diagrammatic way, \VR(9,6)\begin{picture}(27,0)(-3,4)
\pcline{->}(1,10)(19,10)\taput{$\vv$}
\pcline{->}(1,0)(19,0)\tbput{$\vv_1$}
\pcline{->}(0,9)(0,1)\tlput{$\uu$}
\pcline{->}(20,9)(20,1)\trput{$\uu_1$}
\put(7,4){$\rev_\RRR$}
\end{picture}; we then often skip~$\RRR$ if there is no ambiguity.
\end{defi}

 In the above definition, a grid consists of (finitely many) rectangular pieces (or tiles) whose edges are labeled by one or several letters of the current alphabet~$\SSS$ or by~$\ew$, and two adjacent pieces are called \emph{matching} if the letters occurring on the common part of their edges coincide (but we do not assume that all edges involve the same number of letters: by definition, there is only one letter on the top and the left edges, whereas there can be more on the bottom and right edges, depending on the length of the words involved in the relations of the presentation). Let us immediately observe that, by definition, a grid that contains more than one elementary piece can be split into the union of several grids, see for instance Lemma~\ref{L:Decomp} below.

\begin{exam}\label{X:Braid}
Consider the Artin presentation of the $\nn$-strand braid monoid
\begin{equation}\label{E:BraidPres}
\BP\nn:= \bigg\langle \sig1 \wdots \sig{\nn - 1} \ \bigg\vert\ 
\begin{matrix}
\sig\ii \sig j = \sig j \sig\ii 
&\text{for} &\vert i-j \vert\ge 2\\
\sig\ii \sig j \sig\ii = \sig j \sig\ii \sig j 
&\text{for} &\vert i-j \vert = 1
\end{matrix}
\ \bigg\rangle^{\!+}.
\end{equation}
For $\nn \ge 4$, a typical grid for~\eqref{E:BraidPres} is
\begin{equation}\label{E:ExBraid}
\VR(16,15)\begin{picture}(60,0)(0,11)
\pcline{->}(1,24)(19,24)\taput{$\sig2$}
\pcline{->}(21,24)(39,24)\taput{$\sig3$}
\pcline{->}(41,24)(59,24)\taput{$\sig2$}
\pcline{->}(21,12)(39,12)\taput{$\sig3$}
\pcline{->}(41,12)(49,12)\taput{$\sig2$}
\pcline{->}(51,12)(59,12)\taput{$\sig1$}
\pcline{->}(41,6)(49,6)\taput{$\ew$}
\pcline{->}(51,6)(59,6)\taput{$\sig1$}
\pcline{->}(1,0)(9,0)\tbput{$\sig2$}
\pcline{->}(11,0)(19,0)\tbput{$\sig1$}
\pcline{->}(21,0)(29,0)\tbput{$\sig3$}
\pcline{->}(31,0)(39,0)\tbput{$\sig2$}
\pcline{->}(41,0)(49,0)\tbput{$\ew$}
\pcline{->}(51,0)(59,0)\tbput{$\sig1$}
\pcline{->}(0,23)(0,1)\tlput{$\sig1$}
\pcline{->}(20,23)(20,13)\tlput{$\sig1$}
\pcline{->}(20,11)(20,1)\tlput{$\sig2$}
\pcline{->}(40,23)(40,13)\tlput{$\sig1$}
\pcline{->}(40,11)(40,7)\tlput{$\sig2$}
\pcline{->}(40,5)(40,1)\tlput{$\sig3$}
\pcline{->}(50,11)(50,7)\tlput{$\ew$}
\pcline{->}(50,5)(50,1)\tlput{$\sig3$}
\pcline{->}(60,23)(60,19)\trput{$\sig1$}
\pcline{->}(60,17)(60,13)\trput{$\sig2$}
\pcline{->}(60,11)(60,7)\trput{$\ew$}
\pcline{->}(60,5)(60,1)\trput{$\sig3$}
\end{picture}
\end{equation}
It contains eight squares, of which five correspond to relations of~\eqref{E:BraidPres}, and it goes from~$(\sig1, \sig2 \sig3 \sig2)$ to~$(\sig1 \sig2 \sig3, \sig2 \sig1 \sig3 \sig2 \sig1)$, witnessing the right reversing relation $(\sig1, \sig2 \sig3 \sig2) \rev (\sig1 \sig2 \sig3, \sig2 \sig1 \sig3 \sig2 \sig1)$, alias \VR(10,8)\begin{picture}(36,0)(-5,4)
\pcline{->}(1,10)(19,10)\taput{$\sig2 \sig3 \sig2$}
\pcline{->}(1,0)(19,0)\tbput{$\sig2 \sig1 \sig3 \sig2 \sig1$}
\pcline{->}(0,9)(0,1)\tlput{$\sig1$}
\pcline{->}(20,9)(20,1)\trput{$\sig1 \sig2 \sig3$}
\put(8,4){$\rev$}
\end{picture}.
\end{exam}

In all five types of elementary pieces considered in Definition~\ref{D:Grid}, the labels of the two possible paths from the top-left vertex to the bottom-right vertex form words that are $\eqR$-equivalent, \ie, represent the same element in the monoid~$\MON\SSS\RRR$. An easy induction on the number of elementary pieces implies:

\begin{lemm}\label{L:RevEq}
For every monoid presentation~$(\SSS, \RRR)$, and for all words~$\uu, \vv, \uu_1, \vv_1$ in~$\SSSs$, the relation $(\uu, \vv) \rev_\RRR (\uu_1, \vv_1)$ implies $\uu\vv_1 \equiv_\RRR \vv \uu_1$. In particular, 
\begin{equation}\label{E:RevEq}
\text{$(\uu, \vv) \rev_\RRR (\ew, \ew)$ \quad implies \quad $\uu \equiv_\RRR \vv$.}
\end{equation}
\end{lemm}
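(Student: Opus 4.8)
The plan is to establish the first assertion by induction on the number~$\NN$ of elementary pieces in a grid~$\Gamma$ witnessing $(\uu, \vv) \rev_\RRR (\uu_1, \vv_1)$. Granting this, the displayed implication~\eqref{E:RevEq} follows immediately by specializing to $\uu_1 \weq \vv_1 \weq \ew$, since then $\uu\vv_1 \weq \uu$ and $\vv\uu_1 \weq \vv$.

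For the base case $\NN = 1$, the grid~$\Gamma$ is a single elementary piece, hence one of the five types of Definition~\ref{D:Grid}, and it is enough to read off its source and target in each case. For a relation tile attached to~$\ss\ttt_1 {\pdots} \ttt_\qq = \ttt\ss_1 {\pdots} \ss_\pp$, one has $\uu \weq \ss$, $\vv \weq \ttt$, $\uu_1 \weq \ss_1 {\pdots} \ss_\pp$, and $\vv_1 \weq \ttt_1 {\pdots} \ttt_\qq$, so that $\uu\vv_1$ and $\vv\uu_1$ are literally the two sides of that relation, hence $\eqR$-equivalent; for the four remaining tiles, which involve only~$\ew$ and one letter, both $\uu\vv_1$ and $\vv\uu_1$ collapse, after deleting the empty factors, to the same empty or one-letter word, so the equivalence is trivial. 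This is precisely the observation recorded just above the statement.

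For the inductive step, assume $\NN \ge 2$. As noted right after Definition~\ref{D:Grid} and made precise in Lemma~\ref{L:Decomp}, such a grid is the union of two subgrids~$\Gamma'$ and~$\Gamma''$ sharing one side and each comprising fewer than~$\NN$ pieces: cutting~$\Gamma$ along a vertical line, $\Gamma'$ runs from~$(\uu, \vv')$ to~$(\uu'', \vv_1')$ and $\Gamma''$ runs from~$(\uu'', \vv'')$ to~$(\uu_1, \vv_1'')$, where $\uu''$ labels the common inner column and $\vv \weq \vv'\vv''$, $\vv_1 \weq \vv_1'\vv_1''$ (if $\Gamma$ has a single column, one cuts horizontally instead and argues symmetrically). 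The induction hypothesis then yields $\uu\vv_1' \eqR \vv'\uu''$ and $\uu''\vv_1'' \eqR \vv''\uu_1$, whence
\begin{equation*}
\uu\vv_1 \weq \uu\vv_1'\vv_1'' \eqR \vv'\uu''\vv_1'' \eqR \vv'\vv''\uu_1 \weq \vv\uu_1 ,
\end{equation*}
which is the desired equivalence.

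I do not expect a genuine obstacle here: the lemma is essentially a bookkeeping exercise, as its billing as ``an easy induction'' already suggests. The two points that call for a modicum of care are making the decomposition into subgrids fully precise --- tracking which portion of each edge belongs to~$\Gamma'$ and which to~$\Gamma''$, and allowing tiles that straddle the cut to degenerate into $\ew$-labeled pieces --- and confirming that the five types of elementary pieces listed in Definition~\ref{D:Grid} indeed exhaust all possibilities. With those in hand, the proof is just a short chain of concatenations of $\eqR$-equivalences.
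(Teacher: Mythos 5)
Your argument is correct and is exactly the induction the paper has in mind: it states the key observation (that in each of the five elementary pieces the two paths from top-left to bottom-right are $\eqR$-equivalent) and then asserts the lemma follows by ``an easy induction on the number of elementary pieces,'' which is precisely the decomposition-and-concatenation argument you carry out. Your chain $\uu\vv_1 \weq \uu\vv_1'\vv_1'' \eqR \vv'\uu''\vv_1'' \eqR \vv\uu_1$ correctly uses that $\eqR$ is a congruence, and there is no circularity in invoking the grid-splitting made precise in Lemma~\ref{L:Decomp}, whose proof is independent of this lemma.
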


In other words, a reversing grid from~$(\uu, \vv)$ to~$(\uu_1, \vv_1)$ is a special type of van Kampen diagram witnessing the $\equiv_\RRR$-equivalence of the words~$\uu\vv_1$ and~Ê$\vv\uu_1$.

\begin{rema}
In the articles \cite{Dff, Dgc, Dgp, Dhg}, reversing was described in terms of signed $\SSS$-words, defined to be words in a symmetrized alphabet~$\SSS \cup \SSSb$ with $\SSSb$ consisting of one copy~$\overline\ss$ for each letter~$\ss$ of~$\SSS$. If $\ww, \ww'$ are signed $\SSS$-words, one declares that $\ww \rev_\RRR \ww'$ holds if one can go from~$\ww$ to~$\ww'$ by a finite sequence of transformations, each of which consists either in deleting some length two factor~$\overline\ss \ss$, or in replacing some length two factor~$\overline\ss \ttt$ with $\ttt_1 {\pdots} \ttt_\qq \overline\ss_\pp {\pdots} \overline\ss_1$, where $\ss \ttt_1 {\pdots} \ttt_\qq = \ttt \ss_1 {\pdots} \ss_\pp$ is a relation of~$\RRR$. The connection with our current approach is easy: writing $\overline\ww$ for the word obtained from~$\ww$ by exchanging~$\ss$ and~$\overline\ss$ everywhere and reversing the order of letters, the relation $(\uu, \vv) \rev_\RRR (\uu_1, \vv_1)$ of Definition~\ref{D:Grid} is equivalent to $\overline\uu \vv \rev_\RRR \vv_1 \overline{\uu_1}$ in the sense of signed word reversing. The advantage of the current description is to make it more visible that reversing only involves positive words and the presented monoid, without connection with inverting the elements and moving to a group context. In any case, the reversing grid is the fundamental object, and it seems more natural to begin with it.
\end{rema}

One of the advantages of the current grid-based approach is to make the following technical result almost straightforward:

\begin{lemm}\label{L:Decomp}
For every monoid presentation~$(\SSS, \RRR)$, and for all~$\uu$, $\vv'$, $\vv''$, $\uu_1$, $\vv_1$ in~$\SSSs$, the following are equivalent:

\ITEM1 The relation $(\uu, \vv'\vv'') \rev_\RRR (\uu_1, \vv_1)$ holds; 

\ITEM2 There exist~$\uu', \vv'_1, \vv''_1$ in~$\SSSs$ satisfying $(\uu, \vv') \rev_\RRR (\uu', \vv'_1)$, $(\uu', \vv'') \rev_\RRR (\uu_1, \vv''_1)$, and $\vv_1 \weq \vv'_1 \vv''_1$.
\end{lemm}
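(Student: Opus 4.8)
The plan is to prove both implications by manipulating grids geometrically, viewing an $(\SSS, \RRR)$-grid as a planar tiling of a topological rectangle by the five elementary pieces of Definition~\ref{D:Grid}.

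For \ITEM2 $\Rightarrow$ \ITEM1, I would take a grid~$\Gamma'$ from~$(\uu, \vv')$ to~$(\uu', \vv'_1)$ and a grid~$\Gamma''$ from~$(\uu', \vv'')$ to~$(\uu_1, \vv''_1)$, and glue~$\Gamma''$ immediately to the right of~$\Gamma'$ along the seam carrying the word~$\uu'$, which is simultaneously the right edge of~$\Gamma'$ and the left edge of~$\Gamma''$. The figure obtained is again a rectangle tiled by elementary pieces that match pairwise across the seam (where both sides spell~$\uu'$), hence an $(\SSS, \RRR)$-grid; its source is~$(\uu, \vv'\vv'')$ and its target is~$(\uu_1, \vv'_1 \vv''_1)$, so that $(\uu, \vv'\vv'') \rev_\RRR (\uu_1, \vv'_1 \vv''_1)$, which is \ITEM1 with $\vv_1 \weq \vv'_1 \vv''_1$. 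The only mild point is to make the subdivisions of the seam induced by~$\Gamma'$ and by~$\Gamma''$ compatible, which is automatic since each consists of single-letter or~$\ew$ segments spelling~$\uu'$; and the degenerate cases $\uu' \weq \ew$, $\vv' \weq \ew$, $\vv'' \weq \ew$, where the relevant piece of boundary is empty, are trivial.

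For the converse \ITEM1 $\Rightarrow$ \ITEM2, I would start from a grid~$\Gamma$ from~$(\uu, \vv'\vv'')$ to~$(\uu_1, \vv_1)$ and cut it along the vertical path issued from the vertex~$P$ of its top edge that separates the letters of~$\vv'$ from those of~$\vv''$. Since every elementary piece has a single-letter or~$\ew$ top edge, $P$ is a genuine vertex of the tiling; and since every elementary piece also has a single-letter or~$\ew$ left edge, the cut can be prolonged downwards along full tile edges, continuing at each interior vertex it reaches through the vertical edge separating the two tiles just below it, and stopping once it meets the bottom edge of~$\Gamma$. The resulting path, made of whole tile edges, carries some word~$\uu'$ and partitions the tiles of~$\Gamma$ into a left block~$\Gamma'$ and a right block~$\Gamma''$. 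By construction, $\Gamma'$ is a grid from~$(\uu, \vv')$ to~$(\uu', \vv'_1)$ and $\Gamma''$ is a grid from~$(\uu', \vv'')$ to~$(\uu_1, \vv''_1)$, where~$\vv'_1 \vv''_1$ is the bottom edge of~$\Gamma$; with $\vv_1 \weq \vv'_1 \vv''_1$, this is \ITEM2. The cases $\vv' \weq \ew$ (take $\Gamma'$ empty and $\uu' \weq \uu$) and $\vv'' \weq \ew$ (take $\Gamma''$ empty and $\uu' \weq \uu_1$) serve as base cases.

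The step I expect to cost the most care is the well-definedness of the cut in this second implication: one must check that the descending path always admits a continuation until it reaches the bottom edge of~$\Gamma$, and that it stays strictly between the left and right edges of~$\Gamma$ along the way, so that $\Gamma'$ and~$\Gamma''$ really are rectangles. Both facts rest on the pieces tiling a rectangle without overlap and having single-letter top and left edges, which forces the four pieces around any interior vertex reached by the cut to leave a vertical edge below it. Should one wish to bypass the planar picture, the statement can alternatively be obtained by a routine induction on the number of elementary pieces --- peeling off, say, the leftmost piece of the top row at each step --- at the cost of a longer but purely mechanical argument.
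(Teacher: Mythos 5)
Your proof is correct and follows essentially the same route as the paper, which splits the grid into the block of pieces lying below~$\vv'$ and the block lying below~$\vv''$ (and glues two grids along the common word~$\uu'$ for the converse). The extra care you devote to the well-definedness of the vertical cut is reasonable but essentially automatic here, since every elementary piece sits under exactly one letter of the top word, so the columns of the grid are vertical strips of constant width and the cut is simply the boundary between the strips of~$\vv'$ and those of~$\vv''$.
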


\begin{proof}
Assume that $\Gamma$ is an $(\SSS, \RRR)$-grid from~$(\uu, \vv'\vv'')$ to~Ê$(\uu_1, \vv_1)$. By definition, $\Gamma$ is a juxtaposition of elementary diagrams as in Definition~Ê\ref{D:Grid}. 

\noindent\begin{minipage}{\textwidth}
\rightskip40mm \VR(3.2,0) Grouping the diagrams that lie below~$\vv'$ on the one hand, and below~$\vv''$ on the other hand, splits~$\Gamma$ into two grids~$\Gamma'$ and~$\Gamma''$. By construction, the input of~$\Gamma'$ is~$(\uu, \vv')$; call its output~$(\uu', \vv'_1)$. Then, by construction, the input of~$\Gamma'$ is~$(\uu', \vv'')$, and its output has the form~$(\uu_1, \vv''_1)$, with $\vv_1 \weq \vv'_1 \vv''_1$. So \ITEM1 implies~\ITEM2.\ \hfill
\begin{picture}(0,0)(-6,-10)
\pcline{->}(1,10)(14,10)\taput{$\vv'$}
\pcline{->}(16,10)(29,10)\taput{$\vv''$}
\pcline{->}(1,0)(14,0)\tbput{$\vv'_1$}
\pcline{->}(16,0)(29,0)\tbput{$\vv''_1$}
\pcline{->}(0,9)(0,1)\tlput{$\uu$}
\pcline{->}(15,9)(15,1)\trput{$\uu'$}
\pcline{->}(30,9)(30,1)\trput{$\uu_1$}
\psline[linewidth=0.5pt](0,-3)(0,-5)(30,-5)(30,-3)\put(13,-7.5){$\vv_1$}
\put(5.5,4){$\rev$}
\put(21,4){$\rev$}
\end{picture}
\end{minipage}

\VR(3.5,0) Conversely, concatenating a grid from~$(\uu, \vv')$ to~$(\uu', \vv'_1)$ and a grid from~$(\uu', \vv'')$ to~$(\uu_1, \vv''_1)$ provides a grid from~$(\uu, \vv'\vv'')$ to~$(\vv'_1\vv''_1, \uu_1)$, so \ITEM2 implies~\ITEM1. 
\end{proof}

\subsection{Completeness of reversing}\label{SS:Complete}

A reversing grid is a van Kampen diagram of a special type, namely one in which at most two edges (one horizontal, one vertical) start from each node. If there exists an $(\SSS, \RRR)$-grid from~$(\uu, \vv)$ to~$(\ew, \ew)$, then, by Lemma~\ref{L:RevEq}, the words~$\uu$ and~$\vv$ must be $\eqR$-equivalent. Conversely, if $\uu$ and~$\vv$ are $\eqR$-equivalent words, there must exist a van Kampen diagram connecting~$\uu$ and~$\vv$ but, in general, there is no reason why the latter could be chosen with the special form of a reversing grid: for instance, Fact~\ref{F:BGL} below will provide an example of an equivalence that cannot be recognised by a reversing grid. We now consider the case when using reversing grids is always possible.

\begin{defi}
We say that right reversing is \emph{complete} for a monoid presentation~$(\SSS, \RRR)$ if the converse of~\eqref{E:RevEq} also holds, that is, if, for all~$\uu, \vv$ in~$\SSS^*$, 
\begin{equation}\label{E:Complete}
\text{$(\uu, \vv) \rev_\RRR (\ew, \ew)$ \quad is equivalent to \quad $\uu \equiv_\RRR \vv$.}
\end{equation}
\end{defi}

This definition is theoretical, and our aim will be to establish a practical criterion characterizing completeness of reversing. Two such criteria have already appeared. A first criterion is described in~\cite{Dgp}, in terms of what is called the cube condition: in principle, this criterion works for arbitrary presentations but, in practice, it can be used only for complemented presentations, namely presentations with at most one relation $\ss ... \,{=}\, \ttt ...$ for each pair of generators~$(\ss, \ttt)$. Another criterion is described in~\cite{Dgc}, but, even in theory, it does not apply to presentations that are not complemented. What we do below is establish a new completeness criterion that extends the one of~\cite{Dgc} and works for every presentation, complemented or not. The main point is that this new criterion, contrary to the cube condition, remains tractable in the non-complemented case, \ie, without any restriction on the number of relations in the considered presentation. 

It follows from the definition of a reversing grid that reversing can be complete only for monoid presentations with no $\ew$-relation: indeed, by definition, $(\ww, \ew) \rev\nobreak (\ew, \ew)$ is impossible for~$\ww$ nonempty. So we shall hereafter restrict to monoid presentations with no $\ew$-relation. The crucial notion is then the notion of equivalent grids.

\begin{defi}
If $(\SSS, \RRR)$ is a monoid presentation with no $\ew$-relation, two $(\SSS, \RRR)$-grids~$\Gamma, \Gamma'$ are said to be \emph{equivalent} if the labels of the four edges of~$\Gamma$ form words that are $\eqR$-equivalent to their counterparts in~$\Gamma'$.
\end{defi}

We shall start from the following observation.

\begin{lemm}\label{L:CompatCompl}
If $(\SSS, \RRR)$ is a monoid presentation with no $\ew$-relation, a sufficient condition for right reversing to be complete for~$(\SSS, \RRR)$ is that, for all~$\uu, \vv$ in~$\SSSs$,
\begin{equation}\label{E:Compat}\tag{$\Conds{}$}
\parbox{113mm}{For every grid from~$(\uu, \vv)$, and for all~$\uu', \vv'$ satisfying $\uu' \eqR \uu$ and $\vv' \eqR \vv$, \\ \null\hfill there is an equivalent grid from~$(\uu', \vv')$.}
\end{equation}
\end{lemm}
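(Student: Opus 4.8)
The plan is to prove the two implications of \eqref{E:Complete} separately. The implication ``$(\uu, \vv) \rev_\RRR (\ew, \ew)$ implies $\uu \eqR \vv$'' is precisely \eqref{E:RevEq} and needs nothing beyond Lemma~\ref{L:RevEq}; it holds for every presentation. So the entire content is the converse: assuming $\uu \eqR \vv$, I must exhibit an $(\SSS, \RRR)$-grid going from~$(\uu, \vv)$ to~$(\ew, \ew)$, and Condition~\eqref{E:Compat} is what will allow this. The idea is to start from a grid we can produce by hand, namely the trivial ``diagonal'' grid from~$(\uu, \uu)$ to~$(\ew, \ew)$, and then transport it to the pair~$(\uu, \vv)$ using~\eqref{E:Compat}.

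I would first record the auxiliary observation that, because $(\SSS, \RRR)$ has no $\ew$-relation, every word~$\ww$ with $\ww \eqR \ew$ is actually equal to~$\ew$: each elementary $\RRR$-transformation replaces a nonempty factor by a nonempty one, hence preserves non-emptiness, so the $\eqR$-class of~$\ew$ is the singleton~$\{\ew\}$. The second ingredient is the construction, for each word~$\uu$, of one distinguished grid~$\Gamma_\uu$ with source~$(\uu, \uu)$ and target~$(\ew, \ew)$: writing $\uu \weq \ss_1 \pdots \ss_\kk$, take the $\kk \times \kk$ array whose $i$-th diagonal tile is the cancelling tile $(\ss_i, \ss_i) \rev_\RRR (\ew, \ew)$, whose tiles strictly below the diagonal are the ``propagate-left'' tiles $(\ss, \ew) \rev_\RRR (\ss, \ew)$, and whose tiles strictly above the diagonal are the ``propagate-right'' tiles $(\ew, \ttt) \rev_\RRR (\ew, \ttt)$. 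A direct check shows that horizontally and vertically adjacent tiles match all the way across the array and that every tile on the bottom row and on the rightmost column contributes the label~$\ew$, so $\Gamma_\uu$ indeed goes from~$(\uu, \uu)$ to~$(\ew, \ew)$ (for $\uu \weq \ew$ one simply takes the empty grid).

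The main step then combines these two ingredients with~\eqref{E:Compat}. Given $\uu \eqR \vv$, instantiate~\eqref{E:Compat} at the source pair~$(\uu, \uu)$, with the grid~$\Gamma_\uu$, and with the replacement words~$\uu$ and~$\vv$; this is legitimate since $\uu \eqR \uu$ trivially and $\vv \eqR \uu$ by hypothesis. The condition produces a grid~$\Gamma$ with source~$(\uu, \vv)$ that is equivalent to~$\Gamma_\uu$. By definition of equivalent grids, the words carried by the bottom and right edges of~$\Gamma$ are $\eqR$-equivalent to their counterparts in~$\Gamma_\uu$, which are both~$\ew$; by the first observation they are therefore equal to~$\ew$. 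Hence $\Gamma$ has target~$(\ew, \ew)$, i.e., $(\uu, \vv) \rev_\RRR (\ew, \ew)$, which completes the converse and hence the proof.

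No single step is genuinely hard; the argument is essentially a bookkeeping assembly of the reduction, the triviality of the $\eqR$-class of~$\ew$, the explicit diagonal grid, and the definition of equivalence. The one place demanding a little care is the construction of~$\Gamma_\uu$: one must make sure the elementary tiles used are among the five types allowed by Definition~\ref{D:Grid} and that the gluing conditions hold consistently over the whole array, in particular that the entire bottom edge and the entire right edge end up labelled by~$\ew$ --- which is exactly what forces the target to be~$(\ew, \ew)$ and makes the whole scheme work.
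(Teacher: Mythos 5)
Your proof is correct and follows essentially the same route as the paper: build the diagonal grid from~$(\uu,\uu)$ to~$(\ew,\ew)$, transport it to~$(\uu,\vv)$ via~\eqref{E:Compat}, and use the absence of $\ew$-relations to conclude that the new grid's output words, being $\eqR$-equivalent to~$\ew$, are empty. The only difference is that you spell out the tile-by-tile construction of the diagonal grid, which the paper dispatches as a trivial induction on the length of~$\uu$.
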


\begin{proof}
Assume that \eqref{E:Compat} holds for all~$\uu, \vv$ in~$\SSSs$, and let $\uu, \uu'$ be $\eqR$-equivalent words. A trivial induction on the length of~$\uu$ shows 

\noindent\begin{minipage}{\textwidth}
\rightskip48mm that there exists \VR(3.2,0)a grid~$\Gamma$ from~$(\uu, \uu)$ to~$(\ew, \ew)$, as shown on the right. Applying~\eqref{E:Compat} to~$\Gamma$ and to the equivalences $\uu \eqR \uu$ and $\uu \eqR \uu'$, we conclude that there exists a grid~$\Gamma'$ from~$(\uu, \uu')$ that is equivalent to~$\Gamma$. Let $(\uu'_1, \vv'_1)$ be the output of~$\Gamma'$. Then, by assumption, we have $\uu'_1 \eqR \ew$ and $\vv'_1 \eqR \ew$. Because $\RRR$ contains no $\ew$-relation, $\uu'_1 \eqR \ew$ implies that $\uu'_1$ is empty, and $\vv'_1 \eqR \ew$ implies that $\vv'_1 $ is empty.\hfill
\begin{picture}(0,0)(-13,-4)
\pcline{->}(1,24)(9,24)\taput{$\ss_1$}
\pcline[style=etc](11,24)(19,24)
\pcline{->}(21,24)(29,24)\taput{$\ss_\ell$}
\pcline{->}(1,16)(9,16)\tbput{$\ew$}
\pcline[style=etc](11,16)(19,16)
\pcline{->}(21,16)(29,16)\tbput{$\ss_\ell$}
\pcline{->}(1,8)(9,8)\taput{$\ew$}
\pcline[style=etc](11,8)(19,8)
\pcline{->}(21,8)(29,8)\taput{$\ss_\ell$}
\pcline{->}(1,0)(9,0)\tbput{$\ew$}
\pcline[style=etc](11,0)(19,0)
\pcline{->}(21,0)(29,0)\tbput{$\ew$}
\pcline{->}(0,23)(0,17)\tlput{$\ss_1$}
\pcline[style=etc](0,15)(0,9)
\pcline{->}(0,7)(0,1)\tlput{$\ss_\ell$}
\pcline{->}(10,23)(10,17)\trput{$\ew$}
\pcline[style=etc](10,15)(10,9)
\pcline{->}(10,7)(10,1)\trput{$\ss_\ell$}
\pcline{->}(20,23)(20,17)\tlput{$\ew$}
\pcline[style=etc](20,15)(20,9)
\pcline{->}(20,7)(20,1)\tlput{$\ss_\ell$}
\pcline{->}(30,23)(30,17)\trput{$\ew$}
\pcline[style=etc](30,15)(30,9)
\pcline{->}(30,7)(30,1)\trput{$\ew$}
\psline[linewidth=0.5pt](-3,24)(-5,24)(-5,0)(-3,0)\put(-8,11){$\uu$}
\psline[linewidth=0.5pt](0,26)(0,28)(30,28)(30,26)\put(14,29){$\uu$}
\end{picture}
\end{minipage}
So \VR(3.2,0) $\Gamma'$ witnesses that $(\uu, \uu')$ right-reverses to~$(\ew, \ew)$ and, therefore, right reversing is complete for~$(\SSS, \RRR)$.
\end{proof}

As it stands, Lemma~\ref{L:CompatCompl} does not provide a tractable criterion, because it involves arbitrary pairs of $\eqR$-equivalent words in~$\SSSs$. We show now that, under convenient finiteness assumptions (``noetherianity''), the most elementary instances of the condition are sufficient to deduce the full condition.

If $\MM$ is a monoid and $\gg, \hh$ belong to~$\MM$, one says that $\gg$ \emph{properly right-divides}~$\hh$, written $\gg \divR \hh$ or $\hh \multR \gg$, if $\hh = \hh' \gg$ holds for some non-invertible element~$\hh'$ of~$\MM$ (proper left-division~$\divL$ would be defined symmetrically with~$\gg$ on the left).

\begin{defi}
A monoid~$\MM$ is called \emph{right noetherian} if there is no infinite descending sequence with respect to proper right-divisibili\-ty relation in~$\MM$, that is, every sequence $\gg_0 \multR\nobreak \gg_1 \multR\nobreak ...$ in~$\MM$ is finite\footnote{ This notion of noetherianity, which is reminiscent of that of rings and algebras, is not the only one used for semigroups: right noetherianity may also refer to a monoid in which every right congruence is finitely generated, or to a monoid in which every right ideal is finitely generated.}. 
\end{defi}

 In a general monoid, the notions of left invertible, right invertible, and invertible elements need not coincide. That difficulty vanishes in a right noetherian monoid.

\begin{lemm}
Assume that $\MM$ is a right noetherian monoid.

\ITEM1 An element of~$\MM$ is left invertible if, and only if, it is right invertible if, and only if, it is invertible.

\ITEM2 The product of two non-invertible elements of~$\MM$ is non-invertible.
\end{lemm}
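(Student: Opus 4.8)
The plan is to deduce both statements from right noetherianity together with the fact, noted just before the lemma, that the only element we need to control is the unit. Throughout, ``invertible'' means two-sided invertible; ``left invertible'' means admitting a right inverse $\hh$ with $\gg\hh = 1$, and ``right invertible'' symmetrically. Note that left-invertible elements are precisely the proper right-divisors of~$1$ together with~$1$ itself; more precisely, $\gg$ is left invertible if and only if $1 = \gg\hh$ for some~$\hh$, i.e.\ $1 \multeR \gg$ in the non-strict sense.

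First I would prove~\ITEM1. One implication is trivial: an invertible element is both left and right invertible. For the converse, suppose $\gg$ is left invertible, say $\gg\hh = 1$. I claim $\hh\gg = 1$ as well. Consider the element $\ee := \hh\gg$; it is idempotent, since $\ee^2 = \hh(\gg\hh)\gg = \hh\gg = \ee$. Now examine the sequence $1,\ \ee,\ \ee,\ \dots$ — that is stationary and gives nothing directly, so instead I would build a descending chain using a hypothetical failure. If $\ee \ne 1$, then from $1 = \gg\hh$ and $\ee = \hh\gg \ne 1$ one gets that $\gg$ is a proper right-divisor of~$1$: indeed $1 = \gg\hh$ with $\hh$ non-invertible (if $\hh$ were invertible then $\gg = \hh^{-1}$ would be invertible, forcing $\ee = \hh\gg = 1$). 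But then $1 = \gg\hh = \gg\hh\gg\hh = \dots$, and writing $1 = (\gg\hh)^n$ does not immediately descend either; the clean argument is: from $1 \multR \gg$ we also get $\gg = \gg\cdot 1 = \gg\gg\hh \multR \gg\gg \multR \dots$, producing an infinite $\multR$-descending sequence $\gg \multR \gg\gg \multR \gg\gg\gg \multR \cdots$ unless some $\gg^{k}$ is invertible — and one checks $\gg^{k+1}\hh^{k} = \gg$, so each $\gg^{k+1}$ properly right-divides $\gg^{k}$ provided $\hh^k$ (equivalently $\hh$) is non-invertible. This contradicts right noetherianity, so $\hh$ is invertible, whence $\gg = \hh^{-1}$ is invertible, and in particular $\hh\gg = 1$. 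The symmetric statement (right invertible $\Rightarrow$ invertible) follows from the same argument with left and right exchanged, but using right noetherianity again: if $\hh\gg = 1$ with $\gg$ non-invertible, then $\gg = 1\cdot\gg = \hh\gg\gg$, and $\hh^{k}\gg^{k+1} = \gg$ shows $\gg \multR \gg\gg \multR \cdots$ is an infinite descending chain, contradiction; so $\gg$ is invertible. Hence the three notions coincide.

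Next, \ITEM2 is an immediate consequence of~\ITEM1. Suppose $\gg, \hh$ are non-invertible and $\gg\hh$ is invertible, say $(\gg\hh)\kk = 1$. Then $\gg\cdot(\hh\kk) = 1$, so $\gg$ is left invertible, hence invertible by~\ITEM1 — contradiction. Therefore $\gg\hh$ is non-invertible. (Equivalently: if $\gg\hh$ were invertible, then $\hh$ would be left invertible via $\hh\cdot(\kk\gg)\cdots$ — but the one-line argument through $\gg$ already suffices.)

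The main obstacle is organizing the descending-chain argument so that it really produces a strictly $\multR$-descending sequence rather than a stationary one: the temptation is to stare at $1 = \gg\hh = \gg^2\hh^2 = \cdots$, which says nothing, whereas the productive move is to multiply $\gg$ (not $1$) on the right by powers of $\hh$, getting $\gg \succ_{\scriptscriptstyle R} \gg^2 \succ_{\scriptscriptstyle R} \cdots$ and invoking that the ``cofactor'' $\hh^k$ is non-invertible exactly when $\hh$ is. Once that bookkeeping is set up, everything else is formal; no properties of the presentation $(\SSS,\RRR)$ beyond right noetherianity of~$\MM$ are needed, and in particular the absence of $\ew$-relations is not used here — it will enter only later, when noetherianity itself is derived from Condition~\ITEM1 of the main Proposition via the map~$\lambda$.
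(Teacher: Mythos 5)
The second half of your~\ITEM1 (the ``symmetric statement'') and your~\ITEM2 are essentially correct, but the first descending chain, which carries the main case, is oriented against the paper's definition of~$\multR$, and this is a genuine gap. By definition, $\gg'\divR\gg$ (equivalently $\gg\multR\gg'$) means $\gg=\cc\,\gg'$ with the divisor~$\gg'$ on the \emph{right} and the non-invertible cofactor~$\cc$ on the \emph{left}. Starting from $\gg\hh=1$, your identities $\gg^{\kk+1}\hh^{\kk}=\gg$ and $\gg=\gg\gg\hh$ place the candidate divisor~$\gg^{\kk+1}$ on the left and the cofactor~$\hh^{\kk}$ on the right; they witness at best proper \emph{left}-divisibility, about which right noetherianity says nothing. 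So the claimed chain $\gg\multR\gg^2\multR\cdots$ is not established in this case. (The same reversal occurs in your preliminary remark: from $1=\gg\hh$ it is~$\hh$, not~$\gg$, that is a proper right-divisor of~$1$; and note $\gg^{\kk+1}\hh^{\kk}$ equals~$\gg$, not~$\gg^{\kk}$, so even the shape of the asserted relation $\gg^{\kk}\multR\gg^{\kk+1}$ is off.)

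The repair is the paper's route. A correctly oriented chain comes from a \emph{left} inverse: if $\ff\gg=1$ with $\ff$ non-invertible, then $\gg^{\kk}=\ff\gg^{\kk+1}$ gives $1\multR\gg\multR\gg^2\multR\cdots$ --- this is exactly the valid computation in your ``symmetric'' paragraph (use $\gg^{\kk}=\hh\gg^{\kk+1}$ rather than $\hh^{\kk}\gg^{\kk+1}=\gg$, which only yields $\gg\multR\gg^{\kk+1}$). The right-inverse case $\gg\hh=1$ should then be \emph{deduced} from the left-inverse case applied to~$\hh$, whose left inverse is~$\gg$: either $\gg$ is invertible and hence so is~$\hh$ (and then $\gg=\hh^{-1}$), or else $\hh^{\kk}=\gg\hh^{\kk+1}$ descends forever. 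Equivalently, if you want a direct argument from $\gg\hh=1$, the chain must run on powers of~$\hh$, not of~$\gg$.
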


\begin{proof}
\ITEM1 First, we recall that, if an element admits a left and a right inverse, then the latter are equal, for $\ff \gg = \gg \ff' = 1$ implies $\ff = \ff (\gg\ff') = (\ff\gg) \ff' = \ff'$.

Now, assume that $\gg$ admits a left inverse, say $\ff \gg = 1$. Two cases are possible. If $\ff$ is invertible, then so is~$\gg$, since $\hh \ff = 1$ implies $\hh = \hh \ff \gg = \gg$, whence $\gg \ff =\nobreak 1$. Otherwise, $\gg^\kk = \ff \gg \gg^\kk$ gives $\gg^\kk \multR \gg^{\kk + 1}$ for every~$\kk \ge 0$, leading to the infinite descending sequence $1 \multR \gg \multR \gg^2 \multR ...$, which contradicts right noetherianity. Hence left invertibility implies invertibility in~$\MM$.

Next, assume that $\gg$ admits a right inverse, say $\gg \hh = 1$. Then $\hh$ admits a left inverse and, by the above result, $\hh$ must be invertible. This in turn implies that $\hh$ is also a left inverse of~$\gg$, so $\gg$ is invertible, and right invertibility implies invertibility.

\ITEM2 Assume that $\gg$ and~$\hh$ are non-invertible elements of~$\MM$, and $\gg\hh$ is invertible. Then $\gg$ is right invertible and $\hh$ is left invertible, so, by~\ITEM1, both are invertible, which implies that their product is invertible, a contradiction.
\end{proof}

Recognizing whether a monoid is noetherian is in general difficult. In practice, we can use the following criterion.

\begin{lemm}\label{L:NonInv}
For every monoid~$\MM$, the following are equivalent:

\ITEM1 The monoid~$\MM$ is right noetherian.

\ITEM2 There exists a map~$\lambda$ from~$\MM$ to ordinals such that, for all~$\gg, \gg'$ in~$\MM$, 
\begin{equation}\label{E:Wit00}
\text{$\gg \multR \gg'$ implies $\lambda(\gg) > \lambda(\gg')$}. 
\end{equation}

\ITEM3 There exists a map~$\lambda$ from~$\MM$ to ordinals satisfying, for all~$\gg, \hh$ in~$\MM$, 
\begin{equation}\label{E:Wit0}
\text{$\lambda(\gg\hh) \ge \lambda(\hh) + \lambda(\gg)$, \ and \ $\lambda(\gg) > 0$ whenever $\gg$ is non-invertible}. 
\end{equation}
\end{lemm}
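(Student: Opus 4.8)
The plan is to prove the cycle of implications $\ITEM2 \Rightarrow \ITEM1 \Rightarrow \ITEM3 \Rightarrow \ITEM2$, or rather the slightly more economical route $\ITEM3 \Rightarrow \ITEM2 \Rightarrow \ITEM1 \Rightarrow \ITEM3$; the first two arrows are essentially formal, and the genuine content sits in $\ITEM1 \Rightarrow \ITEM3$, where one must manufacture an ordinal-valued map with the additive property out of the mere well-foundedness of proper right-divisibility.

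First, $\ITEM3 \Rightarrow \ITEM2$ is immediate: if $\gg \multR \gg'$, then $\gg = \hh\gg'$ for some non-invertible~$\hh$, so $\lambda(\gg) = \lambda(\hh\gg') \ge \lambda(\gg') + \lambda(\hh) > \lambda(\gg')$ because $\lambda(\hh) > 0$ (here I use that $\alpha + \beta > \alpha$ exactly when $\beta > 0$, a standard fact on ordinal arithmetic). Next, $\ITEM2 \Rightarrow \ITEM1$ is also immediate: an infinite sequence $\gg_0 \multR \gg_1 \multR \cdots$ would yield, via~\eqref{E:Wit00}, a strictly decreasing sequence of ordinals $\lambda(\gg_0) > \lambda(\gg_1) > \cdots$, which is impossible since the ordinals are well-ordered. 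So all the work is in constructing~$\lambda$ from right noetherianity.

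For $\ITEM1 \Rightarrow \ITEM3$, the natural move is to define $\lambda$ to be the \emph{rank function} (foundation rank) of the well-founded relation~$\multR$: set $\lambda(\gg) = \sup\{\lambda(\gg') + 1 \mid \gg \multR \gg'\}$, which is a legitimate definition by well-founded recursion precisely because~$\multR$ has no infinite descending chain, and which takes the value~$0$ exactly on the $\multR$-minimal elements, i.e.\ the invertible ones (an element is invertible iff it has no proper right-divisor, using that $\MM$ has~$1$ as its only candidate base case and that, in a noetherian monoid, invertibility coincides on left/right by the preceding lemma). This already gives the second clause of~\eqref{E:Wit0}. The main obstacle, and the step deserving real care, is the \emph{superadditivity} $\lambda(\gg\hh) \ge \lambda(\hh) + \lambda(\gg)$. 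The idea is a double induction: one shows by induction on~$\lambda(\hh)$ that $\lambda(\gg\hh) \ge \lambda(\hh) + \lambda(\gg)$ for all~$\gg$, and within that, handles the contribution of~$\gg$ by a nested induction on~$\lambda(\gg)$. Concretely, every proper right-divisor of~$\gg$ gives, after multiplying by~$\hh$ on the right, a proper right-divisor of~$\gg\hh$ (if $\gg = \gg''\gg'$ with $\gg''$ non-invertible, then $\gg\hh = \gg''(\gg'\hh)$ with the same non-invertible~$\gg''$), which by the inner induction hypothesis has $\lambda$-value at least $\lambda(\gg'\hh) \ge \lambda(\hh) + \lambda(\gg')$; taking $\sup$ over such $\gg'$ and adding~$1$ yields $\lambda(\gg\hh) \ge \lambda(\hh) + \lambda(\gg)$ when $\lambda(\gg)$ is a successor or a limit of the relevant form. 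The delicate point is the bookkeeping at limit ordinals and the case $\gg$ invertible (where one needs $\lambda(\gg\hh) \ge \lambda(\hh)$, which follows because left-multiplication by an invertible element is a bijection respecting~$\multR$, hence preserves~$\lambda$), together with checking that the outer induction on~$\lambda(\hh)$ correctly transports proper right-divisors of~$\hh$ through left-multiplication by~$\gg$. None of these steps is deep, but the interleaving of the two ordinal inductions is exactly where a careless argument goes wrong, so that is where I would concentrate the writing.
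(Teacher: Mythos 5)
Your proposal is correct and follows essentially the same route as the paper: \ITEM3$\Rightarrow$\ITEM2 and \ITEM2$\Rightarrow$\ITEM1 are the same formal arguments, and for \ITEM1$\Rightarrow$\ITEM3 the paper likewise takes $\lambda$ to be the foundation rank of~$\multR$ (equal to~$0$ exactly on invertibles), proves $\lambda(\gg\hh)=\lambda(\hh)$ for $\gg$ invertible by comparing proper right-divisor sets, and establishes superadditivity from $\ff \divR \gg \Rightarrow \ff\hh \divR \gg\hh$ together with continuity of ordinal addition in its right argument. The only structural difference is that your outer induction on~$\lambda(\hh)$ is superfluous: a single induction on~$\lambda(\gg)$, with $\hh$ universally quantified inside the inductive statement, already closes the argument, since the inductive step only ever decreases the $\lambda$-value of the left factor.
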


\begin{proof}
The equivalence of~\ITEM1 and~\ITEM2 is standard: for~$\multR$ to admit no infinite descending sequence means that the relation~$\multR$ is well-founded, and it is well known that this amounts to the existence of a map to the ordinals that decreases along~$\multR$.

Next, \ITEM3 implies~\ITEM2: indeed, assuming $\gg = \hh\gg'$ with~$\hh$ non-invertible and applying~\eqref{E:Wit0}, we obtain $\lambda(\gg) \ge \lambda(\gg') + \lambda(\hh) > \lambda(\gg')$.

Finally, assume~\ITEM1, whence~\ITEM2. As above, the relation~$\multR$ is well founded, so, by standard arguments, there exists a map $\lambda: \MM \to \Ord$ inductively defined by
\begin{equation}\label{E:Wit1}
\lambda(\gg) := 
\begin{cases}
0 &\text{if $\gg$ is invertible},\\
\sup\{\lambda(\ff) + 1 \mid \ff \divR \gg\} &\text{otherwise}.
\end{cases}
\end{equation}
We claim that this particular function~$\lambda$, which satisfies~\eqref{E:Wit00} by construction, also satisfies~\eqref{E:Wit0}. 
First, we observe that, if $\gg$ is not invertible, then $\gg \multR 1$ is true, so we must have $\lambda(\gg) > \lambda(1) = 0$. So the second assertion in~\eqref{E:Wit0} is true. Next, we observe that, if $\gg$ is invertible, then $\lambda(\gg\hh) \ge \lambda(\hh)$ holds. Indeed, the inequality is trivial for~$\lambda(\hh) = 0$, and, otherwise, the sets $\{\ff \mid \ff \divR \hh\}$ and~$\{\ff \mid \ff \divR \gg\hh\}$ coincide, and we deduce 
$$\lambda(\gg\hh) = \sup\{\lambda(\ff) + 1 \mid \ff \divR \gg\hh\} = \sup\{\lambda(\ff) + 1 \mid \ff \divR \hh\} = \lambda(\hh).$$
We prove now using induction on~$\lambda(\gg)$ that $\lambda(\gg\hh) \ge \lambda(\hh) + \lambda(\gg)$ holds for every~$\hh$ in~$\MM$. Assume first $\lambda(\gg) = 0$. Then $\gg$ must be invertible, and we established above the equality $\lambda(\gg\hh) = \lambda(\hh) = \lambda(\hh) + \lambda(\gg)$, as expected. Assume now $\lambda(\gg) > 0$. Then $\gg$ is not invertible and, by definition, we have $\lambda(\gg) = \sup\{\lambda(\ff) + 1 \mid \ff \divR \gg\}$. Let~$\hh$ be an arbitrary element of~$\MM$. By Lemma~\ref{L:NonInv}, $\gg\hh$ is not invertible, and we obtain
\begin{align*}
\lambda(\gg\hh)
&= \sup\{\lambda(\ff) + 1 \mid \ff \divR \gg\hh\}
&&\text{by definition}\\
&\ge \sup\{\lambda(\ff \hh) + 1 \mid \ff \divR \gg\}
&&\text{because $\ff \divR \gg$ implies $\ff \hh \divR \gg\hh$}\\
&\ge \sup\{\lambda(\hh) + \lambda(\ff) + 1 \mid \ff \divR \gg\}
&&\text{by induction hypothesis}\\
&= \lambda(\hh) + \sup\{\lambda(\ff) + 1 \mid \ff \divR \gg\}
&&\text{by monotonicity of ordinal addition}\\
&= \lambda(\hh) + \lambda(\gg)
&&\text{by definition}.
\end{align*}
Thus the first inequality in~\eqref{E:Wit0} is established, and \ITEM1 implies~\ITEM3.
\end{proof}

Translating the previous result at the level of presentations, we can state:

\begin{lemm}\label{L:Noeth}
If $(\SSS, \RRR)$ is a monoid presentation, the mon\-oid~$\MON\SSS\RRR$ is right noetherian whenever the following equivalent conditions hold:
\begin{gather}\label{E:Wit1}
\parbox{112mm}{%
there exists an $\eqR$-invariant map~$\lambda$ from~$\SSSs$ to the ordinals\\
\null\hfill \VR(3.2,0)satisfying $\lambda(\ss\ww) > \lambda(\ww)$ for all~$\ss$ in~$\SSS$ and~$\ww$ in~$\SSSs$;}\\
\label{E:Wit11}
\parbox{112mm}{%
there exists an $\eqR$-invariant map~$\lambda$ from~$\SSSs$ to the ordinals\\
\null\hfill \VR(3.2,0)satisfying $\lambda(\uu\vv) \ge \lambda(\vv) + \lambda(\uu)$ for all $\uu, \vv$ in~$\SSSs$, and $\lambda(\ss) > 0$ for~$\ss$ in~$\SSS$.} 
\end{gather}
\end{lemm}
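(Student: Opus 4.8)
The plan is to transport both conditions to the monoid $\MM:=\MON\SSS\RRR$ and then apply Lemma~\ref{L:NonInv}. An $\eqR$-invariant map $\lambda\colon\SSSs\to\Ord$ is exactly the same datum as a map $\MM\to\Ord$, which I keep calling~$\lambda$; under this identification the inequality $\lambda(\ss\ww)>\lambda(\ww)$ of the first condition is a statement about left multiplication by generators, while the inequality $\lambda(\uu\vv)\ge\lambda(\vv)+\lambda(\uu)$ of the second condition is verbatim the one in part~\ITEM3 of Lemma~\ref{L:NonInv}. So it is enough to establish the three implications ``second condition'' $\Rightarrow$ ``first condition'', then ``first condition'' $\Rightarrow$ ``$\MM$ right noetherian'', then ``$\MM$ right noetherian'' $\Rightarrow$ ``second condition'': chaining them shows that the two conditions and right noetherianity of~$\MM$ are mutually equivalent, which gives both the stated equivalence and the asserted implication.

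The first of these is a one-line word computation: for $\ss$ in~$\SSS$ and $\ww$ in~$\SSSs$ one has $\lambda(\ss\ww)\ge\lambda(\ww)+\lambda(\ss)>\lambda(\ww)$, the last step because $\lambda(\ss)>0$ and ordinal addition is strictly increasing in its second argument. For the second, I would verify condition~\ITEM2 of Lemma~\ref{L:NonInv} for the induced map on~$\MM$: if $\gg\multR\gg'$, write $\gg=\hh\gg'$ with $\hh$ non-invertible; then $\hh$ cannot be represented by the empty word (that would force $\hh=1$), so $\gg$ is represented by some word of the form $\ss\uu'\ww$ with $\ss\in\SSS$ and $\ww$ a word representing~$\gg'$, whence $\lambda(\gg)=\lambda(\ss\uu'\ww)>\lambda(\uu'\ww)\ge\lambda(\ww)=\lambda(\gg')$, the middle step using the first condition together with the fact (an easy induction on length) that prefixing letters never decreases~$\lambda$. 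Lemma~\ref{L:NonInv} then gives right noetherianity; note that this half uses no hypothesis on $\ew$-relations.

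For the third implication, assume $\MM$ right noetherian and take the canonical ordinal-valued map $\lambda$ constructed in the proof of Lemma~\ref{L:NonInv}, which satisfies $\lambda(\gg\hh)\ge\lambda(\hh)+\lambda(\gg)$ and has $\lambda(\gg)>0$ exactly when $\gg$ is non-invertible. Pulled back to~$\SSSs$ it is $\eqR$-invariant and super-additive, so the only point left is $\lambda(\ss)>0$ for $\ss$ in~$\SSS$, that is, that no generator is invertible in~$\MM$. Here I would invoke the standing hypothesis that $\RRR$ contains no $\ew$-relation: under it the only invertible element of~$\MM$ is~$1$, and no generator is $\eqR$-equivalent to~$\ew$ (a derivation of $\ss\eqR\ew$ would have to finish by applying an $\ew$-relation), so every generator is non-invertible. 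Alternatively, one can observe that the first condition by itself excludes an invertible generator: if $\ss$ were invertible, its inverse would be represented by some word~$\ww$, giving $\ss\ww\eqR\ew$ and hence $\lambda(\ew)=\lambda(\ss\ww)>\lambda(\ww)\ge\lambda(\ew)$, which is absurd.

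The substantive point --- and the reason a purely word-combinatorial argument is not available --- is that the map supplied by the first condition need not itself be super-additive as demanded by the second: a generator may carry a large ordinal while some power of it carries a smaller one along the strict-increase chain, so one genuinely has to replace it by the well-foundedness rank, which is precisely what routing through right noetherianity and Lemma~\ref{L:NonInv} accomplishes. The only other place requiring care is elementary ordinal arithmetic: addition is strictly monotone only in its second argument ($\alpha+\beta>\alpha$ iff $\beta>0$) and merely non-strictly monotone in its first ($\alpha+\beta\ge\beta$, with $1+\omega=0+\omega$ the familiar warning), and $\sup$ absorbs a summand added on the left but not one added on the right; keeping straight on which side of each ``$+$'' an ordinal sits is where a slip is easiest.
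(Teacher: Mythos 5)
Your proof is correct and is essentially the argument the paper intends: the lemma is stated as a direct translation of Lemma~\ref{L:NonInv} to the level of presentations, and your three-step cycle (pull back along the quotient $\SSSs\to\MON\SSS\RRR$, verify condition \ITEM2 of Lemma~\ref{L:NonInv} from~\eqref{E:Wit1}, and recover~\eqref{E:Wit11} from the canonical rank function) is exactly that translation, with the non-invertibility of generators handled correctly via the standing no-$\ew$-relation convention (or, as you note, via~\eqref{E:Wit1} itself).
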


Thus, \eqref{E:Wit1} provides a sufficient condition for right noetherianity---which is also necessary if no element of~$\SSS$ is invertible in~$\MON\SSS\RRR$---and, when it is satisfied, one is assured that the stronger condition~\eqref{E:Wit11} is satisfied (possibly by another map~$\lambda'$). As already noted, in the case of a homogeneous presentation, \ie, when all relations have the form $\ww \,{=}\, \ww'$ with $\ww, \ww'$ of the same length, defining~$\lambda(\ww)$ to be the length of~$\ww$ provides a map~$\lambda$ witnessing~\eqref{E:Wit11}. Note that \eqref{E:Wit1} can hold only if there is no $\ew$-relation so, when considering below monoid presentations that satisfy~\eqref{E:Wit1}, there is no need to explicitly require that they contain no $\ew$-relation.

The main technical result we shall establish is the following criterion for the completeness of reversing:

\begin{lemm}\label{L:Main}
A monoid presentation~$(\SSS, \RRR)$ satisfying~\eqref{E:Wit1} satisfies~\eqref{E:Compat} if, and only if, for every element~$\ss$ in~$\SSS$ and every relation $\ww \,{=}\, \ww'$ in~$\RRR$, 
\begin{equation}\label{E:CompatS}\tag{$\Cond$}
\parbox{113mm}{for every grid from~$(\ss, \ww)$, there is an equivalent grid from~$(\ss, \ww')$, \\ \null\hfill and vice versa.}
\end{equation}
\end{lemm}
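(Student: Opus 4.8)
The plan is to prove the two directions separately, the easy one first. The condition~\eqref{E:Compat} clearly implies~\eqref{E:CompatS}: the latter is just the instance of the former in which $\uu$ is a single letter~$\ss$, $\vv \weq \ww$, $\uu' \weq \ss$, and $\vv' \weq \ww'$ for a relation $\ww\,{=}\,\ww'$ of~$\RRR$ (so that $\uu'\eqR\uu$ and $\vv'\eqR\vv$ hold trivially). So the whole content lies in the converse: assuming~\eqref{E:CompatS} holds for every letter and every relation, one must derive the full condition~\eqref{E:Compat} for arbitrary $\eqR$-equivalent pairs.

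For that converse, the idea is to bootstrap from the elementary instances~\eqref{E:CompatS} to the general statement by a double induction argument, using the noetherianity map~$\lambda$ provided by~\eqref{E:Wit1} (in its stronger form~\eqref{E:Wit11}, which we may assume by Lemma~\ref{L:Noeth}). First I would reduce the two-sided condition "$\uu'\eqR\uu$ and $\vv'\eqR\vv$" to two one-sided moves, handled separately: changing the top word~$\vv$ to an equivalent~$\vv'$ while keeping the left word fixed, and changing the left word~$\uu$ to an equivalent~$\uu'$ while keeping the top word fixed (the general case is the composition of one move of each kind). By the left–right symmetry built into the notion of a grid (swapping the roles of rows and columns, top$\leftrightarrow$left, bottom$\leftrightarrow$right), it suffices to treat one of these, say the move on the top word. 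Next, since $\vv\eqR\vv'$ means $\vv'$ is obtained from~$\vv$ by a finite chain of single applications of relations of~$\RRR$, an induction on the length of that chain reduces matters to the case where $\vv \weq \xx\ww\yy$ and $\vv' \weq \xx\ww'\yy$ for a single relation $\ww\,{=}\,\ww'$ and words $\xx, \yy$. Now Lemma~\ref{L:Decomp} lets me slice a grid from~$(\uu, \xx\ww\yy)$ into three consecutive grids sitting below~$\xx$, below~$\ww$, and below~$\yy$; the grid below~$\xx$ is untouched, the grid below~$\yy$ needs only that its source left-word is replaced by an equivalent one (which is a move of the \emph{other} kind, to be folded into the induction), and the central grid, from $(\uu_{\xx}, \ww)$ to some target, must be replaced by an equivalent grid from $(\uu_{\xx}, \ww')$, where $\uu_{\xx}$ is now an arbitrary word, not a single letter.

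So the crux is: \emph{given a grid from $(\uu, \ww)$ with $\uu$ an arbitrary word and $\ww\,{=}\,\ww'$ a relation, produce an equivalent grid from $(\uu, \ww')$.} This is where the hypothesis~\eqref{E:CompatS} — which only gives the case $\uu \weq \ss$ a single letter — must be amplified, and this is the step I expect to be the main obstacle. The plan is an induction on~$\lambda(\uu)$ (or on the length of~$\uu$, using $\lambda$ to guarantee termination): write $\uu \weq \ss\uu''$, use Lemma~\ref{L:Decomp} to split the grid from $(\ss\uu'', \ww)$ into a grid $\Gamma_1$ from $(\ss, \ww)$ stacked above a grid $\Gamma_2$ from $(\uu'', \ww_1)$, where $(\cdot, \ww_1)$ is the bottom of~$\Gamma_1$. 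Apply~\eqref{E:CompatS} to~$\Gamma_1$ to get an equivalent grid $\Gamma_1'$ from $(\ss, \ww')$; its bottom word $\ww_1'$ satisfies $\ww_1' \eqR \ww_1$ (since equivalent grids have $\eqR$-equivalent edge words). Then one must replace $\Gamma_2$, a grid from $(\uu'', \ww_1)$, by an equivalent grid from $(\uu'', \ww_1')$ — but $\ww_1 \eqR \ww_1'$ is an equivalence of \emph{arbitrary} words, not a single relation-application on a single letter, so this is precisely a smaller instance of the full condition~\eqref{E:Compat}, now with strictly shorter left word~$\uu''$. To make the recursion close, I would set up a single well-founded induction (on $\lambda(\uu) + (\text{something})$, or a lexicographic pair combining $\lambda(\uu)$ with the length of the equivalence derivation for the top word) so that the two reduction steps above — peeling a letter off~$\uu$, and shortening the derivation $\vv\eqR\vv'$ — both strictly decrease the induction parameter, while the left–right symmetric moves fold in without increasing it. One must check carefully that stacking $\Gamma_1'$ on top of the (inductively obtained) replacement for $\Gamma_2$ yields a genuine grid equivalent to the original — i.e. that the matching condition along the seam still holds and that the four boundary words are $\eqR$-equivalent to the originals — which follows because equivalence of grids is preserved under the concatenation of Lemma~\ref{L:Decomp}. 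The delicate bookkeeping is ensuring the induction is genuinely well-founded despite the interplay between the two kinds of moves; condition~\eqref{E:Wit1} is exactly what makes that possible, and Condition~\ITEM3 of the Proposition (no relation $\ss\ww\,{=}\,\ss\ww'$) is \emph{not} needed here — it enters only later, in the cancellativity argument of Section~\ref{SS:Cancel}.
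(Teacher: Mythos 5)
Your overall architecture --- the trivial direction, then an amplification of~\eqref{E:CompatS} by slicing grids with Lemma~\ref{L:Decomp} inside a well-founded induction --- matches the paper's, and the transposition symmetry you invoke is legitimate (the five elementary tiles of Definition~\ref{D:Grid} are closed under swapping top with left and bottom with right). But there is a genuine gap at the heart of the argument: the induction parameter you propose, $\lambda(\uu)$ or the length of~$\uu$, possibly paired lexicographically with the length of the derivation for the top word, does not decrease along your own recursion. In your ``crux'' step you peel a letter off~$\uu$ and recurse into the full condition~\eqref{E:Compat} for~$(\uu'', \ww_1)$; so far the left word shortens. But establishing~\eqref{E:Compat} for~$(\uu'', \ww_1)$ in turn requires, after slicing below~$\xx\ww\yy$, replacing the \emph{middle} column, which is a grid from~$(\uu_\xx, \ww)$ where $\uu_\xx$ is the \emph{right edge} of the column below~$\xx$ --- an output word of a sub-grid. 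Neither $|\uu_\xx|$ nor $\lambda(\uu_\xx)$ is bounded by the corresponding quantity for~$\uu''$ (already in Example~\ref{X:Braid} the input~$\sig1$ produces the output~$\sig1\sig2\sig3$), so the call back into the crux step can strictly increase your parameter and the recursion has no reason to terminate. The third column causes the same problem: its left word is replaced by an equivalent one whose combinatorial distance to the original is uncontrolled, so the secondary ``derivation length'' component does not help either.

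The paper's resolution is to induct not on a measure of the input word but on the \emph{diagonal} of the grid, the ordinal $\lambda(\uu\vv_1) = \lambda(\vv\uu_1)$ attached to the whole grid (well defined by Lemma~\ref{L:RevEq} and the $\eqR$-invariance of~$\lambda$), with a secondary induction on $\distR(\uu, \uu') + \distR(\vv, \vv')$. The point is that every sub-grid obtained by stripping a \emph{nonempty} prefix --- the column below~$\vv_0$, the row below~$\ss$, the column below~$\ww$ --- has strictly smaller diagonal, by the inequalities $\lambda(\vv) < \lambda(\uu\vv)$ for $\uu$ nonempty that follow from the strengthened form~\eqref{E:Wit11} of~\eqref{E:Wit1} together with the absence of $\ew$-relations; and at strictly smaller diagonal one may invoke the full condition~\eqref{E:Compat} for \emph{arbitrary} equivalent sources, which dissolves both uncontrolled recursions at one stroke. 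Your instinct that \eqref{E:Wit1} is ``exactly what makes the induction well-founded'' is right, but the parameters you name do not realize it; without replacing them by the diagonal (or an equivalent global invariant of the grid) the proof does not close. Your remark that Condition (iii) of the Proposition plays no role in this lemma is correct.
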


 The proof will use an induction on an ordinal parameter called the diagonal of a grid:

\begin{defi}
If $(\SSS, \RRR)$ is a monoid presentation and $\lambda$ is a map witnessing~\eqref{E:Wit11}, then, if $\Gamma$ is an $(\SSS, \RRR)$-grid from~$(\uu, \vv)$ to~$(\uu_1, \vv_1)$, the \emph{diagonal} of~$\Gamma$ is the value of~$\lambda(\uu\vv_1)$. 
\end{defi}

Note that, with the above notation and by Lemma~\ref{L:RevEq}, the diagonal of~$\Gamma$ is also equal to~$\lambda(\vv\uu_1)$. 

On the other hand, for~$\ww, \ww'$ in~$\SSSs$, we write $\distR(\ww, \ww')$ for the \emph{combinatorial distance} between~$\ww$ and~$\ww'$ with respect to~$\RRR$, namely the minimal length of an $\RRR$-derivation from~$\ww$ to~$\ww'$ if $\ww$ and~$\ww'$ are $\eqR$-equivalent, and~$\infty$ otherwise. 

\begin{proof}[Proof of Lemma~\ref{L:Main}]
One implication is trivial: \eqref{E:CompatS} for~$\ss$ and~$\ww \,{=}\, \ww'$ follows from applying~\eqref{E:Compat} to the words~$\ss$ and~$\ww$ with the equivalences $\ss \eqR \ss$ and~$\ww' \eqR \ww$.

The point is to establish the converse implication. This will be done using two nested inductions. First, we fix a map~$\lambda$ from~$\SSSs$ to ordinals satisfying~\eqref{E:Wit11}, which is possible by Lemma~\ref{L:Noeth}. By the properties of ordinal addition, we always have
\begin{equation}\label{E:Ineq}
\lambda(\uu) \le \lambda(\uu\vv), \quad \lambda(\vv) \le \lambda(\uu\vv), \text{\quad and\quad $\lambda(\vv) < \lambda(\uu\vv)$ for $\uu$ nonempty}.
\end{equation}
Then, for~$\alpha$ an ordinal, we introduce the special case of Condition~\eqref{E:Compat} corresponding to grids whose diagonal is at most~$\alpha$:
\begin{equation}\label{E:Compat1}\tag{$\Conds\alpha$}
\parbox{113mm}{For every grid with diagonal~${\le}\,\alpha$ from~$(\uu, \vv)$, and for all~$\uu', \vv'$ satisfying \\ \null\hfill $\uu' \eqR \uu$ and $\vv' \eqR \vv$, there is an equivalent grid from~$(\uu', \vv')$.}
\end{equation}
 Finally, for~$\dd$ a natural number, we consider the special case of Condition~\eqref{E:Compat1} corresponding to combinatorial distances between the sources of the old and new grids bounded by~$\dd$:
\begin{equation}\label{E:Compat11}\tag{$\Conds{\alpha, \dd}$}
\parbox{113mm}{For every grid with diagonal~${\le}\,\alpha$ from~$(\uu, \vv)$, and for all~$\uu', \vv'$ satisfying \\ \null\hfill $\distR(\uu, \uu') + \distR(\vv, \vv') \le \dd$, there is an equivalent grid from~$(\uu', \vv')$.}
\end{equation}
It should be clear that \eqref{E:Compat} for two words~$\uu, \vv$ is equivalent to the conjunction of all~\eqref{E:Compat11} for~$\uu, \vv$. Using an induction on~$\alpha$ and, for a given~$\alpha$, on~$\dd$, we shall establish that, if \eqref{E:CompatS} is true for every~$\ss$ and every relation~$\ww \,{=}\, \ww'$ of~$\RRR$, then \eqref{E:Compat11} is true for all~$\uu, \vv$.

Assume first $\alpha = 0$. Assume that $\Gamma$ is a grid with zero diagonal from~$(\uu, \vv)$ to~$(\uu_1, \vv_1)$, and $\uu' \eqR \uu$ and $\vv' \eqR \vv$ hold. By construction, $\lambda(\ww) = 0$ implies that $\ww$ is empty, so $\lambda(\uu\vv_1) = \lambda(\vv \uu_1) = \nobreak 0$ requires that $\uu, \vv, \uu_1$, and~$\vv_1$ all are empty. Next, the assumption $\uu' \eqR \ew$ implies that $\uu'$ is empty, and $\vv' \eqR \vv$ implies that $\vv'$ is empty as well. Then choosing $\Gamma' := \Gamma$ provides the expected condition. So $(\Conds0)$ is true for all~$\uu, \vv$.

Assume now $\alpha > 0$ and $\dd = 0$. Assume that $\Gamma$ is a grid with diagonal~${\le}\,\alpha$ from~$(\uu, \vv)$ to~$(\uu_1, \vv_1)$, and $\distR(\uu, \uu') + \distR(\vv, \vv') = 0$ holds. By definition, we have $\uu' \weq \uu$ and $\vv' \weq \vv$. Then choosing $\Gamma' := \Gamma$ provides the expected condition. So $(\Conds{\alpha,0})$ is true for all~$\uu, \vv$ and for every~$\alpha$.

Assume now $\alpha > 0$ and $\dd = 1$. Assume that $\Gamma$ is a grid with diagonal~${\le}\,\alpha$ from~$(\uu, \vv)$ to~$(\uu_1, \vv_1)$, and $\distR(\uu, \uu') + \distR(\vv, \vv') = 1$ holds. Up to a symmetry, we may assume $\uu' \weq \uu$ and $\distR(\vv', \vv) = 1$. By definition, the latter relation means that there exists a relation~$\ww \,{=}\, \ww'$ in~$\RRR$ and two words~$\vv_0, \vv_2$ satisfying $\vv \weq \vv_0 \ww \vv_2$ and $\vv' \weq \vv_0 \ww' \vv_2$. As $\vv$ is the product $\vv_0 \ww \vv_2$, repeated applications of Lemma~\ref{L:Decomp} show that the assumption $(\uu, \vv) \revR (\uu_1, \vv_1)$ implies the existence of~$\uu_0$, $\uu_2$ and $\vv_3$, $\vv_4$, and~$\vv_5$ satisfying $\vv_1 \weq \vv_3\vv_4\vv_5$ and
$$(\uu, \vv_0) \revR (\uu_0, \vv_3), \quad (\uu_0, \ww) \revR (\uu_2, \vv_4), \quad \text{and} \quad (\uu_2, \vv_2) \revR (\uu_1, \vv_5),$$
corresponding to a decomposition of the grid~$\Gamma$ into the union of three grids
$$\begin{picture}(50,28)(0,-5)
\pcline{->}(1,16)(15,16)\taput{$\vv_0$}
\pcline{->}(17,16)(31,16)\taput{$\ww$}
\pcline{->}(33,16)(47,16)\taput{$\vv_2$}
\pcline{->}(1,0)(15,0)\tbput{$\vv_3$}
\pcline{->}(17,0)(31,0)\tbput{$\vv_4$}
\pcline{->}(33,0)(47,0)\tbput{$\vv_5$}
\psline[linewidth=0.4pt](0,-3)(0,-4)(48,-4)(48,-3)\put(23,-7){$\vv_1$}
\psline[linewidth=0.4pt](0,19)(0,20)(48,20)(48,19)\put(23,21){$\vv$}
\pcline{->}(0,15)(0,1)\tlput{$\uu$}
\pcline{->}(16,15)(16,1)\trput{$\uu_0$}
\pcline{->}(32,15)(32,1)\trput{$\uu_2$}
\pcline{->}(48,15)(48,1)\trput{$\uu_1$}
\put(6.5,7.5){$\rev$}
\put(22.5,7.5){$\rev$}
\put(38.5,7.5){$\rev$}
\end{picture}$$
Assume first that the word~$\uu_0$ is empty. Then, necessarily, $\uu_1$ and $\uu_2$ are empty, and we have $\vv_4 \weq \ww$ and $\vv_5 \weq \vv_2$. Then the situation is as the left diagram below
$$\begin{picture}(50,30)(0,-6)
\pcline{->}(1,16)(15,16)\taput{$\vv_0$}
\pcline{->}(17,16)(31,16)\taput{$\ww$}
\pcline{->}(33,16)(47,16)\taput{$\vv_2$}
\pcline{->}(1,0)(15,0)\tbput{$\vv_3$}
\pcline{->}(17,0)(31,0)\tbput{$\ww$}
\pcline{->}(33,0)(47,0)\tbput{$\vv_2$}
\psline[linewidth=0.4pt](0,-3)(0,-4)(48,-4)(48,-3)\put(23,-7){$\vv_1$}
\psline[linewidth=0.4pt](0,19)(0,20)(48,20)(48,19)\put(23,21){$\vv$}
\pcline{->}(0,15)(0,1)\tlput{$\uu$}
\pcline{->}(16,15)(16,1)\trput{$\ew$}
\pcline{->}(32,15)(32,1)\trput{$\ew$}
\pcline{->}(48,15)(48,1)\trput{$\ew$}
\put(6.5,7.5){$\rev$}
\put(22.5,7.5){$\rev$}
\put(38.5,7.5){$\rev$}
\end{picture}\hspace{13mm}
\begin{picture}(50,30)(0,-6)
\pcline{->}(1,16)(15,16)\taput{$\vv_0$}
\pcline{->}(17,16)(31,16)\taput{$\ww'$}
\pcline{->}(33,16)(47,16)\taput{$\vv_2$}
\pcline{->}(1,0)(15,0)\tbput{$\vv_3$}
\pcline{->}(17,0)(31,0)\tbput{$\ww'$}
\pcline{->}(33,0)(47,0)\tbput{$\vv_2$}
\psline[linewidth=0.4pt](0,-3)(0,-4)(48,-4)(48,-3)\put(23,-7){$\vv'_1$}
\psline[linewidth=0.4pt](0,19)(0,20)(48,20)(48,19)\put(23,21){$\vv'$}
\pcline{->}(0,15)(0,1)\tlput{$\uu$}
\pcline{->}(16,15)(16,1)\trput{$\ew$}
\pcline{->}(32,15)(32,1)\trput{$\ew$}
\pcline{->}(48,15)(48,1)\trput{$\ew$}
\put(6.5,7.5){$\rev$}
\put(22.5,7.5){$\rev$}
\put(38.5,7.5){$\rev$}
\end{picture}$$
and the right diagram shows that $(\Conds{\alpha, 1})$ is satisfied with $\uu'_1$ empty and $\vv'_1 \weq \nobreak \vv_3\ww'\vv_2$.

Assume now that $\uu_0$ is not empty. Then we write $\uu_0 \weq \ss \uu_3$ with~$\ss$ in~$\SSS$. Splitting the grid again, we obtain the existence of words $\uu_4 \wdots \uu_7$ and $\vv_6, \vv_7$ such that the situation is as in the left diagram below
$$\begin{picture}(53,38)(0,-6)
\pcline{->}(1,24)(15,24)\taput{$\vv_0$}
\pcline{->}(17,24)(31,24)\taput{$\ww$}
\pcline{->}(33,24)(47,24)\taput{$\vv_2$}
\pcline{->}(17,16)(31,16)\tbput{$\vv_6$}
\pcline{->}(33,16)(47,16)\tbput{$\vv_7$}
\pcline{->}(1,0)(15,0)\tbput{$\vv_3$}
\pcline{->}(17,0)(31,0)\tbput{$\vv_4$}
\pcline{->}(33,0)(47,0)\tbput{$\vv_5$}
\psline[linewidth=0.4pt](0,-3)(0,-4)(48,-4)(48,-3)\put(23,-7){$\vv_1$}
\psline[linewidth=0.4pt](0,27)(0,28)(48,28)(48,27)\put(23,29){$\vv$}
\pcline{->}(0,23)(0,1)\tlput{$\uu$}
\pcline{->}(16,23)(16,17)\trput{$\ss$}
\pcline{->}(16,15)(16,1)\trput{$\uu_3$}
\pcline{->}(32,23)(32,17)\trput{$\uu_4$}
\pcline{->}(32,15)(32,1)\trput{$\uu_5$}
\pcline{->}(48,23)(48,17)\trput{$\uu_6$}
\pcline{->}(48,15)(48,1)\trput{$\uu_7$}
\psline[linewidth=0.4pt](52,24)(53,24)(53,0)(52,0)\put(54,11){$\uu_1$}
\put(6.5,11.5){$\rev$}
\put(22.5,7.5){$\rev$}
\put(38.5,7.5){$\rev$}
\put(22.5,19.5){$\rev$}
\put(38.5,19.5){$\rev$}
\end{picture}\hspace{13mm}
\begin{picture}(53,35)(0,-7)
\pcline{->}(1,24)(15,24)\taput{$\vv_0$}
\pcline{->}(17,24)(31,24)\taput{$\ww'$}
\pcline{->}(33,24)(47,24)\taput{$\vv_2$}
\pcline{->}(17,16)(31,16)\tbput{$\vv_6'$}
\pcline{->}(33,16)(47,16)\tbput{$\vv_7'$}
\pcline{->}(1,0)(15,0)\tbput{$\vv_3$}
\pcline{->}(17,0)(31,0)\tbput{$\vv_4'$}
\pcline{->}(33,0)(47,0)\tbput{$\vv_5'$}
\psline[linewidth=0.4pt](0,-4)(0,-5)(48,-5)(48,-4)\put(23,-8){$\vv'_1$}
\psline[linewidth=0.4pt](0,27)(0,28)(48,28)(48,27)\put(23,29){$\vv'$}
\pcline{->}(0,23)(0,1)\tlput{$\uu$}
\pcline{->}(16,23)(16,17)\trput{$\ss$}
\pcline{->}(16,15)(16,1)\trput{$\uu_3$}
\pcline{->}(32,23)(32,17)\trput{$\uu_4'$}
\pcline{->}(32,15)(32,1)\trput{$\uu_5'$}
\pcline{->}(48,23)(48,17)\tlput{$\uu_6'$}
\pcline{->}(48,15)(48,1)\tlput{$\uu_7'$}
\psline[linewidth=0.4pt](52,24)(53,24)(53,0)(52,0)\put(54,11){$\uu'_1$}
\put(6.5,11.5){$\rev$}
\put(22.5,7.5){$\rev$}
\put(38.5,7.5){$\rev$}
\put(22.5,19.5){$\rev$}
\put(38.5,19.5){$\rev$}
\end{picture}$$
We shall now establish the existence of words~$\uu'_3 \wdots \uu'_7$ and $\vv'_4 \wdots \vv'_7$ such that the right diagram above is a legitimate $(\SSS, \RRR)$-grid, with $\uu'_\ii \eqR \uu_\ii$ and $\vv'_\jj \eqR \vv_\jj$ for all~$\ii$ and~$\jj$.

We begin with the top median square. By assumption, we have $(\ss, \ww) \revR (\uu_4, \vv_6)$ and $\ww \,{=}\, \ww' \in~\RRR$. By~\eqref{E:CompatS}, there exist~$\uu'_4$ and~$\vv'_6$ satisfying
$$\uu'_4 \eqR \uu_4, \quad \vv'_6\eqR \vv_6 \quad \text{and }\quad (\ss, \ww') \rev_\RRR (\uu'_4, \vv'_6).$$ 

Consider now the bottom median square. Then $\uu_3 \eqR \uu_3$ is trivial, whereas $\vv'_6 \eqR \vv_6$ and $(\uu_3, \vv_6) \revR (\uu_5, \vv_4)$ hold by construction. Moreover, \eqref{E:Ineq} implies 
$$\lambda(\uu_3\vv_4) < \lambda(\ss\uu_3\vv_4) \le \lambda(\vv_0\ss\uu_3\vv_4) \le \lambda(\vv_0\ss\uu_3\vv_4 \vv_5) = \lambda(\uu\vv_1) \le \alpha,$$ 
whence $\beta:= \lambda(\uu_3\vv_4) < \alpha$. By induction hypothesis, $(\Conds\beta)$ is true for~$\uu_3$ and~$\vv_6$, and we deduce the existence of~$\uu'_5$ and~$\vv'_4$ satisfying 
$$\uu'_5 \eqR \uu_5, \quad \vv'_4\eqR \vv_4 \quad \text{and }\quad (\uu_3, \vv'_6) \rev_\RRR (\uu'_5, \vv'_4).$$ 

We move to the top right square. Then $\vv_2 \eqR \vv_2$ is trivial, whereas $\uu'_4 \eqR \uu_4$ and $(\uu_4, \vv_2) \revR (\uu_6, \vv_7)$ hold by construction. Moreover, because $\ww$ cannot be empty, since $\RRR$ contains no $\ew$-relation, \eqref{E:Ineq} implies 
$$\lambda(\uu_4\vv_7) < \lambda(\ww\uu_4\vv_7) \le \lambda(\vv_0\ww\uu_4\vv_7) \le \lambda(\vv_0\ww\uu_4\vv_7\uu_7) = \lambda(\uu\vv_1) \le \alpha,$$ 
whence $\gamma:= \lambda(\uu_4\vv_7) < \alpha$. By induction hypothesis, $(\Conds\gamma)$ is true for~$\uu_4$ and~$\vv_2$, and we deduce the existence of~$\uu'_6$ and~$\vv'_7$ satisfying 
$$\uu'_6 \eqR \uu_6, \quad \vv'_7\eqR \vv_7 \quad \text{and }\quad (\uu'_4, \vv_2) \rev_\RRR (\uu'_6, \vv'_7).$$

Finally, we consider the bottom right square. By construction, we have $\uu'_5 \eqR\nobreak \uu_5$, $\vv'_7 \eqR \vv_7$ and $(\uu_5, \vv_7) \revR (\uu_7, \vv_5)$. Moreover, \eqref{E:Ineq} implies 
$$\lambda(\uu_5\vv_5) \le \lambda(\vv_6\uu_5\vv_5) < \lambda(\ss\vv_6\uu_5\vv_5) \le \lambda(\vv_0\ss\vv_6\uu_5\vv_5) = \lambda(\uu\vv_1) \le \alpha,$$ 
whence $\delta:= \lambda(\uu_5\vv_5) < \alpha$. By induction hypothesis, $(\Conds\delta)$ is true for~$\uu_5$ and~$\vv_7$, and we deduce the existence of~$\uu'_7$ and~$\vv'_5$ satisfying 
$$\uu'_7 \eqR \uu_7, \quad \vv'_5\eqR \vv_5 \quad \text{and }\quad (\uu'_5, \vv'_7) \rev_\RRR (\uu'_7, \vv'_5).$$

Put $\uu'_1 \weq \uu'_6\uu'_7$ and $\vv'_1 \weq \vv_3\vv'_4\vv'_5$. Then $\uu'_1 \eqR \uu_1$ and $\vv'_1 \eqR \vv_1$ hold, and the right diagram below witnesses $(\vv, \vv') \revR (\uu'_1, \vv'_1)$. Thus $(\Conds{\alpha, 1})$ is satisfied for~$\uu$ and~$\vv$, which completes the case $\dd = 1$ in the induction for~$(\Conds\alpha)$.

Assume finally $\alpha > 0$ and $\dd \ge 2$. Assume that $\Gamma$ is a grid with diagonal~${\le}\,\alpha$ from~$(\uu, \vv)$ to~$(\uu_1, \vv_1)$, and $\distR(\uu, \uu') + \distR(\vv, \vv') = \dd$ holds. We can find two words~$\uu'', \vv''$ satisfying 
$$\distR(\uu'', \uu) + \distR(\vv'', \vv) = \dd -1 \quad \text{and}\quad\distR(\uu', \uu'') + \distR(\vv', \vv'') = 1.$$
By assumption, we have $\lambda(\uu\vv_1) \le \alpha$. By induction hypothesis, $(\Conds{\alpha, \dd - 1})$ is true for~$\uu$ and~$\vv$, so we deduce the existence of~$\uu''_1, \vv''_1$ satisfying
$$\uu''_1 \eqR \uu_1, \quad \vv''_1 \eqR \vv_1, \quad \text{and} \quad (\uu'', \vv'') \revR (\uu''_1, \vv''_1).$$
Now $\uu'' \eqR \uu$ and $\vv''_1 \eqR \vv_1$ imply $\lambda(\uu''\vv''_1) = \lambda(\uu\vv_1) \le \alpha$. By induction hypothesis, $(\Conds{\alpha, 1})$ is true for~$\uu''$ and~$\vv''$, so we deduce the existence of~$\uu'_1, \vv'_1$ satisfying
$$\uu'_1 \eqR \uu''_1, \quad \vv'_1 \eqR \vv''_1, \quad and \quad (\uu', \vv') \revR (\uu'_1, \vv'_1).$$
By transitivity of~$\eqR$, we have $\uu'_1 \eqR \uu_1$ and $\vv'_1 \eqR \vv_1$, and we conclude that $(\Conds{\alpha, \dd})$ is true for~$\uu$ and~$\vv$. This completes the induction.
\end{proof}

\subsection{Main results}\label{SS:Cancel}

We are now ready to state the main results of the paper and, in particular, to establish the cancellativity criterion announced in the title.

First, summarizing the results established so far directly gives the following:

\begin{prop}
Assume that a monoid presentation~$(\SSS, \RRR)$ satisfies~\eqref{E:Wit1} and~\eqref{E:CompatS} for every~$\ss$ in~$\SSS$ and every relation~$\ww \,{=}\, \ww'$ in~$\RRR$.

\ITEM1 For all~$\uu, \vv, \uu_1, \vv_1$ in~$\SSSs$ satisfying $(\uu, \vv) \rev_\RRR (\uu_1, \vv_1)$, and for all~$\uu', \vv'$ in~$\SSSs$ satisfying $\uu' \eqR \uu$ and $\vv' \eqR \vv$, there exist~$\uu'_1, \vv'_1$ satisfying $(\uu', \vv') \rev_\RRR (\uu'_1, \vv'_1)$, with $\uu'_1 \eqR \uu_1$ and $\vv'_1 \eqR \vv_1$.

\ITEM2 For all~$\uu, \vv$ in~$\SSSs$, the words $\uu$ and~$\vv$ represent the same element of the monoid~$\MON\SSS\RRR$ if, and only if, $(\uu, \vv) \rev_\RRR (\ew, \ew)$ holds.
\end{prop}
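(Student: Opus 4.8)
The plan is to obtain this Proposition as a direct corollary of the machinery developed in Section~\ref{SS:Complete}, with essentially no new argument: everything has been set up so that the two parts follow by combining Lemma~\ref{L:Main}, Lemma~\ref{L:CompatCompl}, and Lemma~\ref{L:RevEq}. The first step is to observe that, since $(\SSS, \RRR)$ satisfies~\eqref{E:Wit1} by hypothesis and satisfies~\eqref{E:CompatS} for every~$\ss$ in~$\SSS$ and every relation~$\ww \,{=}\, \ww'$ of~$\RRR$, Lemma~\ref{L:Main} applies and tells us that $(\SSS, \RRR)$ satisfies the global condition~\eqref{E:Compat} for all~$\uu, \vv$ in~$\SSSs$. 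This is the only place where the hypotheses are actually used, and it is where the real work already took place (the nested induction on diagonal and combinatorial distance inside the proof of Lemma~\ref{L:Main}).

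For~\ITEM1, I would simply unwind the statement of~\eqref{E:Compat}: given words~$\uu, \vv, \uu_1, \vv_1$ with $(\uu, \vv) \revR (\uu_1, \vv_1)$, fix an $(\SSS, \RRR)$-grid~$\Gamma$ witnessing this reversing relation, so that $\Gamma$ is a grid from~$(\uu, \vv)$ to~$(\uu_1, \vv_1)$. For any~$\uu', \vv'$ with $\uu' \eqR \uu$ and $\vv' \eqR \vv$, condition~\eqref{E:Compat} yields an equivalent grid~$\Gamma'$ from~$(\uu', \vv')$; let $(\uu'_1, \vv'_1)$ be its target. Then $(\uu', \vv') \revR (\uu'_1, \vv'_1)$ by definition of~$\revR$, and the fact that $\Gamma'$ is equivalent to~$\Gamma$ means precisely that the four edge-labels of~$\Gamma'$ are $\eqR$-equivalent to those of~$\Gamma$; in particular $\uu'_1 \eqR \uu_1$ and $\vv'_1 \eqR \vv_1$. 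That is the whole of~\ITEM1.

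For~\ITEM2, the forward-trivial direction is Lemma~\ref{L:RevEq}: if $(\uu, \vv) \revR (\ew, \ew)$ then $\uu \eqR \vv$ by~\eqref{E:RevEq}. For the converse, note first that~\eqref{E:Wit1} forbids $\ew$-relations, so $(\SSS, \RRR)$ has no $\ew$-relation and Lemma~\ref{L:CompatCompl} is applicable; since we have just established that~\eqref{E:Compat} holds for all~$\uu, \vv$, Lemma~\ref{L:CompatCompl} gives that right reversing is complete for~$(\SSS, \RRR)$, which is exactly the statement that $\uu \eqR \vv$ implies $(\uu, \vv) \revR (\ew, \ew)$. Combining the two directions gives the equivalence in~\ITEM2.

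There is no genuine obstacle here: the Proposition is a bookkeeping step that packages Lemma~\ref{L:Main}, Lemma~\ref{L:CompatCompl}, and Lemma~\ref{L:RevEq} into the form that will be convenient for the cancellativity arguments that follow. The only mild care required is to spell out that ``equivalent grid'' carries the information $\uu'_1 \eqR \uu_1$, $\vv'_1 \eqR \vv_1$ about the targets (not just about the sources), and to remark that $\eqR$-invariance of~$\lambda$ in~\eqref{E:Wit1} is what guarantees no $\ew$-relation is present, so that Lemma~\ref{L:CompatCompl} may be invoked. Accordingly the proof will be short, consisting essentially of the sentence ``by Lemma~\ref{L:Main}, \eqref{E:Compat} holds; now apply the definition of~$\revR$ for~\ITEM1 and Lemmas~\ref{L:CompatCompl} and~\ref{L:RevEq} for~\ITEM2.''
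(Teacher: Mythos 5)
Your proposal is correct and follows the paper's own argument essentially verbatim: the paper likewise obtains~\ITEM1 as the unwinding of Condition~\eqref{E:Compat}, which Lemma~\ref{L:Main} supplies from the hypotheses, and derives~\ITEM2 from Lemma~\ref{L:CompatCompl} together with Lemma~\ref{L:RevEq}. Your added remark that \eqref{E:Wit1} rules out $\ew$-relations, so that Lemma~\ref{L:CompatCompl} is applicable, is a small point the paper leaves implicit but is entirely consistent with it.
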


\begin{proof}
Point~\ITEM1 is Condition~\eqref{E:Compat} for~$\uu, \vv$, and Lemma~\ref{L:Main} states that the latter holds whenever \eqref{E:Wit1} holds and so does~\eqref{E:CompatS} for every~$\ss$ in~$\SSS$ and every relation~$\ww \,{=}\, \ww'$ in~$\RRR$.

\ITEM2 By Lemma~\ref{L:CompatCompl}, \ITEM1, that is, \eqref{E:Compat} for all~$\uu, \vv$, implies that reversing is complete for~$(\SSS, \RRR)$, which, by definition, implies the equivalence of~\ITEM2. 
\end{proof}

Let us turn to left cancellativity. Then completeness of right reversing is useful, as it shows that, if there is no obvious counter-example to left cancellativity, then there is no hidden counter-example either:

\begin{lemm}
If right reversing is complete for the presenta\-tion~$(\SSS, \RRR)$ and $\RRR$ contains no relation of the form $\ss\uu \,{=}\, \ss\vv$ with $\ss$ in~$\SSS$ and $\uu, \vv$ distinct, then the monoid~$\MON\SSS\RRR$ admits left cancellation.
\end{lemm}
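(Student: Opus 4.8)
The plan is to reduce left cancellation to cancellation by a single generator and then to read off the conclusion from the rigid shape of a reversing grid witnessing the relevant equivalence.

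First, I would observe that it suffices to prove the implication $\ss\uu \eqR \ss\vv \Rightarrow \uu \eqR \vv$ for $\ss$ in~$\SSS$ and $\uu, \vv$ in~$\SSSs$: given $\gg, \hh, \kk$ in~$\MON\SSS\RRR$ with $\gg\hh = \gg\kk$, writing $\gg$ as a word $\ss_1 {\pdots} \ss_\nn$ and applying this special case $\nn$ times yields $\hh = \kk$. So fix $\ss, \uu, \vv$ with $\ss\uu \eqR \ss\vv$; since right reversing is complete for~$(\SSS, \RRR)$, there is an $(\SSS, \RRR)$-grid~$\Gamma$ from~$(\ss\uu, \ss\vv)$ to~$(\ew, \ew)$.

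The core step is the analysis of the first row and first column of~$\Gamma$. The elementary piece occupying its top-left corner carries the letter~$\ss$ on both its top edge and its left edge. Among the piece types of Definition~\ref{D:Grid}, the only ones having a letter of~$\SSS$ on both of these edges are the relation tiles and the piece with top~$\ss$, left~$\ss$, bottom~$\ew$, right~$\ew$; a relation tile there would require a relation of~$\RRR$ whose two sides both begin with~$\ss$, which the hypothesis forbids, so the corner tile is the second one. Its bottom and right edges being empty, the piece directly below it has empty top edge, hence is one of the only two pieces with empty top edge, namely the fully empty piece or the piece whose top and bottom edges are empty and whose left and right edges carry a common letter; in either case its bottom edge is again empty, so iterating shows that the whole part of the first column lying below the corner is made of such pieces. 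Consequently that column merely transfers the letters of~$\uu$ across from its left side to its right side, and it contributes the empty word to the bottom edge of~$\Gamma$. Symmetrically, the part of the first row lying to the right of the corner transfers the letters of~$\vv$ downwards and contributes the empty word to the right edge of~$\Gamma$.

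It follows that erasing the first row and the first column of~$\Gamma$ leaves a rectangular assembly of matching elementary pieces, that is, an $(\SSS, \RRR)$-grid~$\Gamma'$ whose left edge is~$\uu$, whose top edge is~$\vv$, and whose right and bottom edges, being those of~$\Gamma$ shorn of the empty pieces just removed, are still the empty word. Hence $(\uu, \vv) \revR (\ew, \ew)$, and Lemma~\ref{L:RevEq} gives $\uu \eqR \vv$, as wanted. The only real work lies in the bookkeeping of the previous paragraph: one must check that fully empty pieces possibly interspersed among the transfer pieces of the first row or column do not disturb the reading of~$\uu$ and~$\vv$, and that the assembly left after erasure genuinely satisfies Definition~\ref{D:Grid}, the degenerate subcases in which $\uu$ or~$\vv$ is empty being handled directly. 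One may also organise this step through Lemma~\ref{L:Decomp} and its left-edge analogue, isolating inside~$\Gamma$ a corner sub-grid from~$(\ss, \ss)$, a sub-grid from~$(\uu, \ew)$, and a sub-grid from~$(\ew, \vv)$, each of which is then forced; in any event no genuine difficulty arises beyond this combinatorial bookkeeping.
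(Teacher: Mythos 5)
Your proof is correct and follows essentially the same route as the paper's: reduce left cancellation to cancellation of a single generator, invoke completeness to get a grid from $(\ss\uu,\ss\vv)$ to $(\ew,\ew)$, observe that the hypothesis on~$\RRR$ forces the top-left corner to be a cancellation square so that the first row and column merely transfer $\vv$ and~$\uu$, and extract a grid witnessing $(\uu,\vv)\rev_\RRR(\ew,\ew)$, concluding by Lemma~\ref{L:RevEq}. The paper phrases the extraction as a four-block decomposition via Lemma~\ref{L:Decomp} (and its left-edge analogue), which is exactly the alternative organisation you mention at the end.
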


\begin{proof}
It is enough to prove that, for all words~$\uu, \vv$ in~$\SSSs$, every relation of the form $\ss\uu \eqR \ss\vv$ with~$\ss$ in~$\SSS$ implies $\uu \eqR \vv$. So \VR(3.5,0) assume $\ss\uu \eqR \ss\vv$. 

\noindent\begin{minipage}{\textwidth}
\rightskip30mm By \VR(3.5,0) completeness of right reversing, we deduce $(\ss\uu, \ss\vv) \revR (\ew, \ew)$. By Lemma~\ref{L:Decomp}, a grid witnessing $(\ss\uu, \ss\vv) \revR (\ew, \ew)$ decomposes into four grids, as shown on the right. The assumption about the presentation requires that $\uu_1$ and~$\vv_1$ are empty, which in turn implies $\uu_2 \weq \uu$ and $\vv_2 \weq \vv$. \hfill
\begin{picture}(0,0)(-7,-2)
\pcline{->}(1,20)(9,20)\taput{$\ss$}
\pcline{->}(11,20)(19,20)\taput{$\vv$}
\pcline{->}(1,10)(9,10)\tbput{$\vv_1$}
\pcline{->}(11,10)(19,10)\tbput{$\vv_2$}
\pcline{->}(1,0)(9,0)\tbput{$\ew$}
\pcline{->}(11,0)(19,0)\tbput{$\ew$}
\pcline{->}(0,19)(0,11)\tlput{$\ss$}
\pcline{->}(0,9)(0,1)\tlput{$\uu$}
\pcline{->}(10,19)(10,11)\trput{$\uu_1$}
\pcline{->}(10,9)(10,1)\trput{$\uu_2$}
\pcline{->}(20,19)(20,11)\trput{$\ew$}
\pcline{->}(20,9)(20,1)\trput{$\ew$}
\end{picture}
\end{minipage}
\noindent Then \VR(3.2,0) the bottom right square witnesses $(\uu, \vv) \revR (\ew, \ew)$, which, by Lemma~\ref{L:RevEq}, implies $\uu \eqR \vv$.
\end{proof}

Putting things together, we deduce the practical cancellativity criterion that is the main result of this paper, as stated in the preamble of the paper: 

\begin{prop}\label{P:Crit}
Assume that a monoid presentation~$(\SSS, \RRR)$ satisfies~\eqref{E:Wit1} and \eqref{E:CompatS} for every~$\ss$ in~$\SSS$ and every relation~$\ww \,{=}\, \ww'$ in~$\RRR$. Then a sufficient condition for the monoid~$\MON\SSS\RRR$ to be left cancellative is that there is no relation~$\ss\ww = \ss\ww'$ in~$\RRR$ with $\ww, \ww'$ distinct. 
\end{prop}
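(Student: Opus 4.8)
The plan is to derive Proposition~\ref{P:Crit} by simply chaining together the three results established immediately before it. The statement's hypotheses are exactly \eqref{E:Wit1} plus \eqref{E:CompatS} for every~$\ss$ and every relation of~$\RRR$, together with the syntactic condition that $\RRR$ has no relation~$\ss\ww \,{=}\, \ss\ww'$ with $\ww \neq \ww'$; the conclusion is left cancellativity of~$\MON\SSS\RRR$. So the architecture is: \eqref{E:Wit1} and \eqref{E:CompatS} feed into the Proposition just above (the unnumbered \textbf{prop} at the start of Section~\ref{SS:Cancel}), whose point~\ITEM2 tells us that right reversing is complete for~$(\SSS, \RRR)$. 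Completeness, together with the ``no $\ss\ww \,{=}\, \ss\ww'$'' condition, is precisely the hypothesis of the Lemma immediately preceding Proposition~\ref{P:Crit}, whose conclusion is that $\MON\SSS\RRR$ admits left cancellation. That is the entire argument.

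Concretely, the steps in order: first I would invoke the Proposition at the head of Section~\ref{SS:Cancel}, applied with the assumed \eqref{E:Wit1} and \eqref{E:CompatS}, to get that for all~$\uu, \vv$ in~$\SSSs$ one has $\uu \eqR \vv$ iff $(\uu, \vv) \revR (\ew, \ew)$ --- i.e.\ right reversing is complete. Second, I would feed this completeness statement, together with the hypothesis that $\RRR$ contains no relation~$\ss\uu \,{=}\, \ss\vv$ with $\ss$ in~$\SSS$ and $\uu \neq \vv$, into the preceding Lemma, and read off its conclusion that $\MON\SSS\RRR$ admits left cancellation. Since both invoked results are already proved in the excerpt, nothing further is needed; the proof is a two-line citation.

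There is essentially no obstacle here: the work has all been done in Lemma~\ref{L:Main} (the nested double induction on diagonal and combinatorial distance), in Lemma~\ref{L:CompatCompl}, and in the two lemmas on completeness versus left cancellation. If I had to name a ``hard part'' it would only be bookkeeping: making sure the named hypotheses \eqref{E:Wit1} and \eqref{E:CompatS} are quoted in exactly the form the earlier Proposition requires, and that the ``no $\ss\ww\,{=}\,\ss\ww'$'' clause is stated so as to match the earlier Lemma's phrasing (distinct words, $\ss$ ranging over~$\SSS$). A short remark could be appended noting that \eqref{E:Wit1} is automatic for homogeneous presentations (take $\lambda(\ww)$ to be the length of~$\ww$), so that in the homogeneous case the criterion reduces to checking \eqref{E:CompatS} and the syntactic non-cancellation clause --- but that is optional colour, not part of the proof.
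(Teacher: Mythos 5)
Your proposal is correct and is exactly the paper's argument: Proposition~\ref{P:Crit} is stated with no separate proof precisely because it is the immediate concatenation of the unnumbered Proposition at the head of Section~\ref{SS:Cancel} (completeness of right reversing from~\eqref{E:Wit1} and~\eqref{E:CompatS}) with the Lemma just before it (completeness plus the absence of relations $\ss\ww \,{=}\, \ss\ww'$ with $\ww \neq \ww'$ yields left cancellation). Nothing is missing.
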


 We recall once again that Condition~\eqref{E:Wit1} automatically holds when the considered presentation~$(\SSS, \RRR)$ is homogeneous.

Of course, a symmetric criterion exists for right cancellativity: right noetherianity is to be replaced with left noetherianity, meaning the non-existence of an infinite descending sequence with respect to proper left divisibility, and right reversing grids are to be replaced with their left counterparts, in which one starts from the bottom and right edges and uses the relations to build a rectangular diagram in which the output corresponds to the left and top edges. Note that a right reversing grid is not a left reversing grid, in particular because ``cancellation squares'' are not the same: \VR(9,7) \begin{picture}(16,0)(-3,4)
\pcline{->}(1,10)(9,10)\taput{$\ss$}
\pcline{->}(0,9)(0,1)\tlput{$\ss$}
\pcline{->}(1,0)(9,0)\tbput{$\ew$}
\pcline{->}(10,9)(10,1)\trput{$\ew$}
\end{picture} in a right reversing grid, to be compared with \VR(9,7) \begin{picture}(16,0)(-3,4)
\pcline{->}(1,10)(9,10)\taput{$\ew$}
\pcline{->}(0,9)(0,1)\tlput{$\ew$}
\pcline{->}(1,0)(9,0)\tbput{$\ss$}
\pcline{->}(10,9)(10,1)\trput{$\ss$}
\end{picture} in a left reversing grid.

\begin{rema}\label{R:Embed}
Contrary to Adjan's cancellativity criterion of~\cite{Adj, Rem}, the criterion of Proposition~\ref{P:Crit} does not guarantee that the considered monoid~$\MON\SSS\RRR$ embeds in its universal group, that is, in the group defined, as a group, by the presentation~$(\SSS, \RRR)$, sometimes also called the enveloping group of~$\MON\SSS\RRR$. For instance, consider the monoid~$\MM$ with presentation
\begin{equation}\label{E:Malcev}
\MON{\tta, \ttb, \ttc, \ttd, \tta', \ttb', \ttc', \ttd'}{\tta\ttc = \ttb\ttd, \tta\ttc' = \ttb\ttd', \tta'\ttc = \ttb'\ttd}.
\end{equation}
The monoid~$\MM$ fails to satisfy the first Malcev condition~\cite[Chapter~12, page~310]{ClP} and, therefore, it does not embed in its universal group. However, the presentation of~\eqref{E:Malcev} is eligible for the cancellativity criterion of Proposition~\ref{P:Crit}---but not for Adjan's criterion, since $(\tta, \ttb)$ is a cycle in the left graph. 
\end{rema}

We conclude with one more application of completeness of right reversing, now in terms of common (right) multiples.

\begin{prop}\label{P:CommonMult}
Assume that a monoid presentation~$(\SSS, \RRR)$ satisfies~\eqref{E:Wit1} and \eqref{E:CompatS} for every~$\ss$ in~$\SSS$ and every relation~$\ww \,{=}\, \ww'$ in~$\RRR$. Then two elements~$\aa, \bb$ of~$\MON\SSS\RRR$ respectively represented by words~$\uu$ and~$\vv$ in~$\SSSs$ admit a common right multiple if, and only if, there exists at least one $(\SSS, \RRR)$-grid from~$(\uu, \vv)$; in this case, every common right multiple of~$\aa$ and~$\bb$ is a right multiple of an element represented by~$\uu\vv_1$ and by~$\vv\uu_1$ with~$\uu_1, \vv_1$ satisfying $(\uu, \vv) \rev_\RRR (\uu_1, \vv_1)$.
\end{prop}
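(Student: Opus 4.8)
The plan is to reduce the statement about common right multiples to the completeness of right reversing, using the proposition established just before it. First I would prove the easy direction and the structural claim together. Suppose $\aa, \bb$ admit a common right multiple $\cc$, say $\cc = \aa \gg = \bb \hh$ in $\MON\SSS\RRR$. Pick words $\ww_0, \ww_1$ with $\gg$ represented by $\ww_0$ and $\hh$ by $\ww_1$; then $\uu\ww_0 \eqR \vv\ww_1$. By Part~\ITEM2 of the preceding Proposition (completeness of right reversing, which holds since $(\SSS,\RRR)$ satisfies~\eqref{E:Wit1} and~\eqref{E:CompatS}), we have $(\uu\ww_0, \vv\ww_1) \revR (\ew, \ew)$. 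Now I would invoke Lemma~\ref{L:Decomp} twice: first to split off $\uu$ from $\uu\ww_0$ on the left edge, yielding $(\uu, \vv\ww_1) \revR (\uu_1', \vv_1')$ with a further reversing on the tail; but it is cleaner to split the \emph{top} edge $\vv\ww_1$ as $\vv$ followed by $\ww_1$, obtaining via Lemma~\ref{L:Decomp} words $\uu_2, \vv_1$ with $(\uu, \vv) \revR (\uu_1, \vv_1)$ for suitable $\uu_1$ (here $\uu_1$ is the intermediate right-edge output). The key point is that the existence of the big grid to $(\ew,\ew)$ forces the existence of the sub-grid from $(\uu,\vv)$, so in particular there is at least one $(\SSS,\RRR)$-grid from $(\uu,\vv)$. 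This handles the ``only if'' direction.

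For the converse, suppose there is an $(\SSS, \RRR)$-grid from $(\uu, \vv)$, say to $(\uu_1, \vv_1)$, so $(\uu, \vv) \revR (\uu_1, \vv_1)$. By Lemma~\ref{L:RevEq}, $\uu\vv_1 \eqR \vv\uu_1$, so the element $\cc$ represented by either of these words satisfies $\cc = \aa \cdot [\vv_1] = \bb \cdot [\uu_1]$ in $\MON\SSS\RRR$, where $[\cdot]$ denotes the class in the monoid; hence $\cc$ is a common right multiple of $\aa$ and $\bb$. That proves the equivalence.

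For the final structural assertion — that every common right multiple of $\aa$ and $\bb$ is a right multiple of one represented by $\uu\vv_1$ and $\vv\uu_1$ with $(\uu,\vv) \revR (\uu_1,\vv_1)$ — I would return to the argument of the first paragraph. Starting from an arbitrary common right multiple $\cc = \aa\gg = \bb\hh$ with $\gg = [\ww_0]$, $\hh = [\ww_1]$, completeness gives $(\uu\ww_0, \vv\ww_1) \revR (\ew, \ew)$. Decomposing this grid along the left edge at the split $\uu \cdot \ww_0$ and along the top edge at $\vv \cdot \ww_1$ via repeated use of Lemma~\ref{L:Decomp}, I extract a sub-grid whose source is exactly $(\uu, \vv)$; call its target $(\uu_1, \vv_1)$. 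Then $(\uu,\vv) \revR (\uu_1,\vv_1)$, and the remaining part of the big grid witnesses that the word $\uu\ww_0$ (representing $\cc$) equals $\uu\vv_1$ followed by some further reversing data, i.e. $\cc$ is a right multiple of $[\uu\vv_1] = [\vv\uu_1]$. I expect the main obstacle to be purely bookkeeping: carefully tracking which intermediate edge words of the decomposed big grid play the role of $\uu_1$ and $\vv_1$, and verifying that the ``leftover'' pieces genuinely assemble into a reversing grid exhibiting $\cc$ as a right multiple of the element represented by $\uu\vv_1$ (equivalently $\vv\uu_1$). This is where one must be precise about the geometry of how Lemma~\ref{L:Decomp} cuts a rectangular grid into a $2\times 2$ block array and read off the four corner outputs; the algebraic content, once the picture is drawn, is immediate from Lemmas~\ref{L:RevEq} and~\ref{L:Decomp}.
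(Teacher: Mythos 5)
Your proposal is correct and follows essentially the same route as the paper: the easy direction is Lemma~\ref{L:RevEq}, and the converse applies completeness to the pair $(\uu\ww_0,\vv\ww_1)$, splits the resulting grid to $(\ew,\ew)$ into a $2\times2$ block array, takes the top-left block as the grid from $(\uu,\vv)$ to $(\uu_1,\vv_1)$, and reads off from the remaining blocks (via Lemma~\ref{L:RevEq}) that $\ww_0\eqR\vv_1\uu_2$ for some $\uu_2$, so $\cc$ is a right multiple of $[\uu\vv_1]$. The only cosmetic caveat is that Lemma~\ref{L:Decomp} is stated for splitting the top edge, so the cut along the left edge needs its (symmetric, equally immediate) vertical analogue --- exactly as the paper itself tacitly does.
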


\begin{proof}
Assume that there exists a grid from~$(\uu, \vv)$, say $(\uu, \vv) \rev_\RRR (\uu_1, \vv_1)$. By Lemma~\ref{L:RevEq}, this implies $\uu\vv_1 \eqR \vv\uu_1$, which shows that the element of~$\MON\SSS\RRR$ represented by~$\uu\vv_1$ and~$\vv\uu_1$ is a common right multiple of~$\aa$ and~$\bb$. 

\noindent\begin{minipage}{\textwidth}
\rightskip30mm Conversely, \VR(3.5,0) assume that~$\cc$ is a common right multiple of~$\aa$ and~$\bb$: this means that there exist words~$\uu', \vv'$ such that $\cc$ is represented by~$\uu\vv'$ and~$\vv\uu'$, which therefore satisfy $\uu\vv' \eqR \vv\uu'$. Under the assumptions, right reversing is complete for~$(\SSS, \RRR)$, so $(\uu\vv', \vv\uu') \rev_\RRR (\ew, \ew)$ holds. Splitting a reversing grid in four pieces as shown on the right, we see that there exists a grid
\begin{picture}(0,0)(-10,-2)
\pcline{->}(1,20)(9,20)\taput{$\vv$}
\pcline{->}(11,20)(19,20)\taput{$\uu'$}
\pcline{->}(1,10)(9,10)\tbput{$\vv_1$}
\pcline{->}(11,10)(19,10)\tbput{$\vv_2$}
\pcline{->}(1,0)(9,0)\tbput{$\ew$}
\pcline{->}(11,0)(19,0)\tbput{$\ew$}
\pcline{->}(0,19)(0,11)\tlput{$\uu$}
\pcline{->}(0,9)(0,1)\tlput{$\vv'$}
\pcline{->}(10,19)(10,11)\trput{$\uu_1$}
\pcline{->}(10,9)(10,1)\trput{$\uu_2$}
\pcline{->}(20,19)(20,11)\trput{$\ew$}
\pcline{->}(20,9)(20,1)\trput{$\ew$}
\end{picture}
\end{minipage}
\noindent from~$(\uu, \vv)$, \VR(3.5,0) and that the equivalences $\uu' \eqR \uu_1\vv_2$, $\vv_2 \eqR \uu_2$, and $\vv' \eqR \vv_1\uu_2$ are satisfied. The latter show that $\cc$ is a right multiple of the element represented by~$\uu\vv_1$ and~$\vv\uu_1$.
\end{proof}

\begin{coro}\label{C:CommonMult}
Assume that a monoid presentation~$(\SSS, \RRR)$ satisfies the assumptions of Proposition~\ref{P:CommonMult} and, moreover, it is right complemented, \ie, if, for all~$\ss, \ttt$ in~$\SSS$, there is at most one relation $\ss... \,{=}\, \ttt...$ in~$\RRR$. Then two elements~$\aa, \bb$ of~$\MON\SSS\RRR$ respectively represented by~$\uu$ and~$\vv$ in~$\SSSs$ admit a common right multiple if, and only if, $(\uu, \vv) \rev_\RRR (\uu_1, \vv_1)$ holds for some~$\uu_1, \vv_1$; in this case, the element represented by~$\uu\vv_1$ and~$\vv\uu_1$ is a right lcm of~$\aa$ and~$\bb$.
\end{coro}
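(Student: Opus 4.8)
The plan is to reduce Corollary~\ref{C:CommonMult} to Proposition~\ref{P:CommonMult}, with the only real content being the passage from ``right multiple of some element represented by $\uu\vv_1$ and $\vv\uu_1$'' to ``right lcm''. The key extra ingredient supplied by the hypothesis of right complementation is that the reversing relation becomes \emph{deterministic}: for any pair $(\ss,\ttt)$ of generators there is at most one relation of the form $\ss\dots = \ttt\dots$, so starting from a source $(\uu,\vv)$ there is essentially only one way to build a grid piece by piece, and consequently any two grids from $(\uu,\vv)$ that both reach a terminal-looking configuration have the same target up to word equality (not merely up to $\eqR$). I would first record this determinism as a short observation: if $(\uu,\vv)\rev_\RRR(\uu_1,\vv_1)$ and $(\uu,\vv)\rev_\RRR(\uu_1',\vv_1')$ with the complementation hypothesis, then $\uu_1\weq\uu_1'$ and $\vv_1\weq\vv_1'$; this follows by induction on the size of the grids, using that each elementary tile is forced by its top-left corner data and Lemma~\ref{L:Decomp} to split off the first column/row.

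Next I would invoke Proposition~\ref{P:CommonMult} directly. If $\aa,\bb$ admit a common right multiple $\cc$, that proposition gives a grid $(\uu,\vv)\rev_\RRR(\uu_1,\vv_1)$ and tells us $\cc$ is a right multiple of the element $\mm$ represented by $\uu\vv_1$ and $\vv\uu_1$; conversely $\mm$ itself is a common right multiple (by Lemma~\ref{L:RevEq}, $\uu\vv_1\eqR\vv\uu_1$, and it is visibly a right multiple of both $\aa$ and $\bb$ since $\uu\vv_1$ starts with $\uu$ and $\vv\uu_1$ starts with $\vv$). So $\mm$ is a common right multiple that divides every common right multiple on the right --- that is precisely the definition of a right lcm, once we know $\mm$ is well-defined independently of the chosen grid. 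Here the determinism observation does the job: any two grids from $(\uu,\vv)$ witnessing a common multiple can be completed, using completeness of right reversing (available since the assumptions of Proposition~\ref{P:CommonMult} include~\eqref{E:Wit1} and~\eqref{E:CompatS}), to grids with target $(\ew,\ew)$ of the larger source, and determinism forces the intermediate targets $(\uu_1,\vv_1)$ to coincide up to $\weq$, hence $\mm$ is canonical.

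I would then assemble these pieces: the ``only if'' direction and the existence of the grid are immediate from Proposition~\ref{P:CommonMult}; the ``if'' direction likewise; and the lcm claim follows by combining (i) $\mm$ is a common right multiple, (ii) every common right multiple is a right multiple of $\mm$, and (iii) $\mm$ does not depend on the grid. Strictly, for the lcm statement one only needs (i) and (ii) for the particular $\mm$ produced, and Proposition~\ref{P:CommonMult} already delivers both for \emph{some} grid; so in fact the corollary is almost a restatement, and the role of right complementation is mainly to license speaking of \emph{the} right lcm rather than \emph{a} right lcm, and to guarantee that the reversing procedure, when it terminates, produces this canonical pair.

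The main obstacle I anticipate is the determinism/confluence step: one must be careful that right complementation controls not just the first tile but the entire grid, and in particular that two grids from the same source with the same ``shape'' of target (both ending in a fully $\ew$-bordered region after completion) genuinely agree letter by letter. The cleanest route is an induction on $\distR$ or on the diagonal, peeling off the top-left tile --- which is uniquely determined by the first letters of $\uu$ and $\vv$ together with the complementation hypothesis --- and then applying Lemma~\ref{L:Decomp} to recurse on the smaller grids; the induction hypothesis then forces agreement of the remaining columns and rows. Everything else is bookkeeping with $\eqR$ and the already-established completeness of right reversing.
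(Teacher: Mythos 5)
Your argument is correct and follows essentially the same route as the paper: right complementation forces the $(\SSS,\RRR)$-grid from $(\uu,\vv)$ to be unique (each tile being determined by its top-left corner data), so the element represented by $\uu\vv_1$ in Proposition~\ref{P:CommonMult} is canonical and is therefore a right lcm. The paper states the uniqueness in one line where you sketch an induction via Lemma~\ref{L:Decomp}; your extra detour through completeness of reversing is harmless but unnecessary.
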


\begin{proof}
The assumption that $(\SSS, \RRR)$ is right complemented implies that an $(\SSS, \RRR)$-grid from~$(\uu, \vv)$ is unique when it exists. Thus, Proposition~\ref{P:CommonMult} says that every common right multiple of~$\aa$ and~$\bb$ is a right multiple of the element represented by~$\uu\vv_1$. So the latter element, when it exists, is a right lcm of~$\aa$ and~$\bb$.
\end{proof}

Specializing even more, we finally obtain:

\begin{coro}\label{C:CommonMult2}
Assume that a monoid presentation~$(\SSS, \RRR)$ satisfies the assumptions of Proposition~\ref{P:CommonMult} and, moreover, for all~$\ss, \ttt$ in~$\SSS$, there exist~$\ss', \ttt'$ in~$\SSS$ such that $\ss\ttt' \,{=}\, \ttt\ss'$ is a relation of~$\RRR$. Then any two elements of the monoid~$\MON\SSS\RRR$ admit a right lcm.
\end{coro}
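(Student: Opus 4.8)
The plan is to bootstrap from Corollary~\ref{C:CommonMult} by showing that the extra hypothesis---for every pair $\ss,\ttt$ in~$\SSS$ there is a relation $\ss\ttt' \,{=}\, \ttt\ss'$ in~$\RRR$---guarantees that any two elements of $\MON\SSS\RRR$ admit \emph{some} common right multiple, so that the ``if and only if'' of Corollary~\ref{C:CommonMult} can be upgraded to an unconditional existence statement for right lcms. Indeed, under this hypothesis every pair of generators, viewed as a source $(\ss,\ttt)$, has at least one elementary grid tile on top of it, namely the tile associated with the relation $\ss\ttt' \,{=}\, \ttt\ss'$; the point is to parlay this into a full $(\SSS,\RRR)$-grid from an arbitrary source $(\uu,\vv)$.

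The key steps, in order, would be: (1) Observe that the hypothesis is exactly the statement that for all $\ss,\ttt\in\SSS$ there exists an $(\SSS,\RRR)$-grid from $(\ss,\ttt)$ (a single relation tile). (2) Prove by induction on $|\uu|+|\vv|$ that for all words $\uu,\vv\in\SSSs$ there exists an $(\SSS,\RRR)$-grid from $(\uu,\vv)$: if $\uu\weq\ew$ or $\vv\weq\ew$ use the degenerate $\ew$-tiles of Definition~\ref{D:Grid}; otherwise write $\uu\weq\ss\uu_0$ and $\vv\weq\ttt\vv_0$, place the relation tile for $(\ss,\ttt)$ in the top-left corner obtaining an output $(\ss'\cdots,\ttt'\cdots)$ along its right and bottom edges, and then fill the remaining L-shaped region by repeated appeals to Lemma~\ref{L:Decomp} and the induction hypothesis---each remaining sub-source is strictly shorter, or one extends the argument by a secondary induction on the combined length of the words labelling the edges to be completed. (This is the grid-building analogue of the classical ``reversing always terminates when the presentation is right-complemented and every pair of generators has a common multiple'' argument.) (3) Invoke Lemma~\ref{L:RevEq}: any such grid from $(\uu,\vv)$ to $(\uu_1,\vv_1)$ gives $\uu\vv_1 \eqR \vv\uu_1$, hence a common right multiple of the elements represented by $\uu$ and~$\vv$. (4) Now every two elements of $\MON\SSS\RRR$ have a common right multiple, so the hypothesis of Corollary~\ref{C:CommonMult} (``admit a common right multiple'') is vacuously always met; Corollary~\ref{C:CommonMult} then produces a right lcm for every pair, which is the claim.

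The main obstacle is step~(2): showing that grid construction does not get stuck and in fact terminates. Termination is where Condition~\eqref{E:Wit1} does the real work---the ordinal $\lambda$ bounds the ``diagonal'' $\lambda(\uu\vv_1)$ of any grid from $(\uu,\vv)$, so one can run the induction on that ordinal (exactly as in the proof of Lemma~\ref{L:Main}), peeling off one corner tile at a time and checking that each of the resulting sub-grids-to-be-built has strictly smaller diagonal, or the same diagonal and strictly shorter edge words, mirroring the $(\Conds{\alpha,\dd})$ bookkeeping already used above. One must also be slightly careful that the hypothesis only supplies relation tiles for pairs of \emph{generators}, not arbitrary words, so the induction has to reduce the general case to the generator case by repeatedly splitting off leading letters via Lemma~\ref{L:Decomp}; and one should note that right-complementedness (inherited from Corollary~\ref{C:CommonMult}'s hypotheses) makes the relation tile for each $(\ss,\ttt)$ unique, which keeps the construction deterministic but is not strictly needed for mere existence of a common multiple.
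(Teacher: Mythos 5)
Your overall structure is exactly the paper's: use the extra hypothesis to produce at least one grid from every source $(\uu,\vv)$, deduce via Lemma~\ref{L:RevEq} that any two elements have a common right multiple, and then feed this into Corollary~\ref{C:CommonMult} to get a right lcm. The one place where your argument goes astray is the justification of step~(2), i.e.\ why the grid construction never gets stuck and terminates. You attribute termination to Condition~\eqref{E:Wit1} and the ordinal~$\lambda$ bounding the diagonal, ``exactly as in the proof of Lemma~\ref{L:Main}''. That is not what does the work here, and it cannot: noetherianity bounds the diagonal of any \emph{completed} grid, but it does not prevent the tile-filling process from producing longer and longer edge words forever without ever closing up --- this is precisely the second obstruction the paper names (``the process never terminates because smaller and smaller arrows appear without end''), and it is a real phenomenon even for noetherian complemented presentations. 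The induction scheme of Lemma~\ref{L:Main} is an induction over grids that are already given, not a construction of new ones, so it does not transfer.

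The correct (and much simpler) reason, which is the one the paper uses, is the specific shape of the hypothesized relations: since for every pair $(\ss,\ttt)$ there is a relation $\ss\ttt'\,{=}\,\ttt\ss'$ with $\ss',\ttt'$ single letters, one can always choose the corresponding tile, whose right and bottom edges each carry exactly one letter. Hence every internal cell of the grid under construction is again faced with a pair of single generators (for which a tile exists by hypothesis) or with an $\ew$-edge, and the whole grid from $(\uu,\vv)$ is literally a $|\uu|\times|\vv|$ rectangular array of such tiles --- finiteness is immediate, with no ordinal bookkeeping. Your secondary induction ``on the combined length of the words labelling the edges to be completed'' is the right instinct, but it only closes once you make explicit that the chosen tiles' outputs have length one (otherwise the edge words could grow and the induction measure would not decrease). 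I would also note that your parenthetical appeal to right-complementedness is doing real work at the end: without it, Proposition~\ref{P:CommonMult} only says every common right multiple is a multiple of \emph{some} grid output, and uniqueness of the grid is what turns that output into an lcm.
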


\begin{proof}
The presentation is eligible for Corollary~\ref{C:CommonMult}, so we know that any two elements with a common right multiple admit a right lcm. The additional assumption about~$(\SSS, \RRR)$ guarantees that, for all words~$\uu, \vv$ in~$\SSSs$, there exists one $(\SSS, \RRR)$-grid from~$(\uu, \vv)$: indeed, obstructions arise when a relation $\ss... \,{=}\, \ttt...$ is missing, and when the process never terminates because smaller and smaller arrows appear without end. The assumption that there always exist a relation $\ss... \,{=}\, \ttt...$ discards the first obstruction; the assumption that the relations involve words of length~${\le}\,2$ discards the second one. Thus, any two elements of the monoid~$\MON\SSS\RRR$ admit a common right multiple, hence a right lcm.
\end{proof}

\begin{rema}\label{R:Defect}
The cancellativity criterion of Proposition~\ref{P:Crit} subsumes the one established in~\cite{Dgc} in the case of a right complemented presentation. In such a case, there exists at most one $(\SSS, \RRR)$-grid admitting a given source~$(\uu, \vv)$, and, therefore, the output words can be seen as functions of~$\uu$ and~$\vv$. Then, the cancellativity criterion can be stated as a compatibility of the functions in question, called ``complement'', with the equivalence relation~$\eqR$. In our general case, the scheme of the proof remains the same, but one needs to find a different formalism, which makes the extension nontrivial: indeed, whenever the considered presentation contains at least two relations with the same initial letters, there may exist more than one grid with a given source, and complement functions just make no sense. In~\cite{Dgc}, in addition to qualitative aspects, some quantitative results are established, and they can be extended to our current framework. Say that a monoid presentation~$(\SSS, \RRR)$ has \emph{defect}~$\dd$ if, for every~$\ss$ in~$\SSS$, every relation~$\ww \,{=}\, \ww'$ in~$\RRR$, and every $(\SSS, \RRR)$-grid~$\Gamma$ from~$(\ss, \ww)$, there exists an equivalent $(\SSS, \RRR)$-grid~$\Gamma'$ from~$(\ss, \ww')$ such that the sum of the distances between the output words of~$\Gamma$ and~$\Gamma'$ is bounded above by~$\dd$, and $\dd$ is minimal with that property. Then the inductive proof of Proposition~\ref{L:Main} can be adapted to show that, if $(\SSS, \RRR)$ has finite defect~$\dd$ and $\Gamma$ is a grid from~$(\uu, \vv)$, then, for all $\uu' \eqR \uu$ and~$\vv' \eqR \vv$, there exists an equivalent grid~$\Gamma'$ from~$(\uu', \vv')$ such that the distance between the outputs of~$\Gamma$ and~$\Gamma'$ is bounded by an explicit function of the distance between their inputs, actually a double exponential of base~$\dd$. The reader is referred to~\cite{Dgc} to fill in the details.
\end{rema}

\section{Applications to variants of braid monoids}\label{S:Braids}

As an application of the results of Section~\ref{S:Criterion}, we now establish that the monoids of colored braids, which are extensions of the classical Artin braid monoids, admit cancellation.

\subsection{Braids with colored crossings}\label{SS:Colored}

We mentioned in Example~\ref{X:Braid} that, for $\nn \ge 1$, the standard $\nn$-strand monoid~$\BP\nn$ is the monoid presented by~\eqref{E:BraidPres}. We recall, for instance from~\cite{Bir}, that, under interpreting~$\sig\ii$ as the elementary crossing that exchanges the strands at positions~$\ii$ and~$\ii + 1$ as in
$$\begin{picture}(64,12)(0,0)
\put(-12,3.5){$\sig\ii : $}
\psline[linewidth=2pt]{cc-cc}(0,8)(0,0)\put(-1,9){$1$}
\psline[linewidth=2pt]{cc-cc}(8,8)(8,0)\put(7,9){$2$}
\put(14,3){$\cdots$}
\psline[linewidth=2pt]{cc-cc}(24,8)(24,0)\put(21.5,9){$\ii{-}1$}
\psline[linewidth=2pt]{cc-cc}(32,8)(40,0)\put(31,9){$\ii$}
\psline[linewidth=2pt,border=2pt]{cc-cc}(40,8)(32,0)\put(37,9){$\ii{+}1$}
\psline[linewidth=2pt]{cc-cc}(48,8)(48,0)\put(45.5,9){$\ii{+}2$}
\put(54,3){$\cdots$}
\psline[linewidth=2pt]{cc-cc}(64,8)(64,0)\put(63,9){$\nn$}
\end{picture}$$
the monoid~$\BP\nn$ is the monoid of isotopy classes of positive $\nn$-strand braid diagrams. We now consider an extension of the monoid~$\BP\nn$:

\begin{defi}
For $\nn \ge 1$ and $\CC$ a nonempty set, the monoid of \emph{positive $\CC$-colored braids} is the monoid with presentation
\begin{equation}\label{E:ColBraidPres}
\BP{\nn, \CC}:= \bigg\langle \sigg\ii\aa ( \ii \le \nn, \aa \in \CC ) \ \bigg\vert\ 
\begin{matrix}
\sigg\ii\aa \sigg\jj\bb = \sigg\jj\bb \sigg\ii\aa 
&\text{for} &\vert i-j \vert\ge 2\\
\sigg\ii\aa \sigg\jj\bb \sigg\ii\cc = \sigg\jj\cc \sigg\ii\bb \sigg\jj\aa 
&\text{for} &\vert i-j \vert = 1
\end{matrix}
\ \bigg\rangle^{\!+}.
\end{equation}
\end{defi}

The idea is that the generator~$\sigg\ii\aa$ corresponds (as usual) to a crossing at positions~$\ii$ and~$\ii + 1$ with, in addition, an attached ``color''~$\aa$ in~$\CC$. The relations of~\eqref{E:ColBraidPres} are then natural if we imagine that the colors are connected with the names, or initial positions, of the strands (as opposed to the current positions). Typically, we may think of taking for~$\CC$ the set of all (unordered) pairs in~$\{1 \wdots \nn\}$, the meaning of the crossing~$\sigg\ii{\pp, \qq}$ being ``the strands starting at positions~$\pp$ and~$\qq$ cross at position~$\ii$'', see Figure~\ref{F:ColorCross}.

\begin{figure}[htb]
\begin{picture}(16,25)(0,2)
\psline[linewidth=2pt]{cc-cc}(0,24)(16,8)(16,0)\put(-1,25.5){$\pp$}
\psline[linewidth=2pt,border=2pt]{cc-cc}(8,24)(0,16)(0,8)(8,0)\put(7,25.5){$\qq$}
\psline[linewidth=2pt,border=2pt]{cc-cc}(16,24)(16,16)(0,0)\put(15,25.5){$\rr$}
\put(-7,19){$\sigg\ii{\pp,\qq}$}
\put(16,11){$\sigg{\ii+1}{\pp,\rr}$}
\put(-7,3){$\sigg\ii{\qq,\rr}$}
\put(-7,3){$\sigg\ii{\qq,\rr}$}
\put(28,11){$\sim$}
\end{picture}
\hspace{25mm}
\begin{picture}(21,25)(0,2)
\psline[linewidth=2pt]{cc-cc}(0,24)(0,16)(16,0)\put(-1,25.5){$\pp$}
\psline[linewidth=2pt,border=2pt]{cc-cc}(8,24)(16,16)(16,8)(8,0)\put(7,25.5){$\qq$}
\psline[linewidth=2pt,border=2pt]{cc-cc}(16,24)(0,8)(0,0)\put(15,25.5){$\rr$}
\put(16,19){$\sigg{\ii+1}{\qq,\rr}$}
\put(-7,11){$\sigg\ii{\pp,\rr}$}
\put(16,3){$\sigg{\ii+1}{\pp,\qq}$}
\end{picture}
\caption{\sf Colored braid relation: if we give~$\sigg\ii{\pp, \qq}$ the meaning ``the strands starting at positions~$\pp$ and~$\qq$ cross at position~$\ii$'', that is, if we take into account the names (origins) of the strand that cross, then the relations of~\eqref{E:ColBraidPres} appear naturally.}
\label{F:ColorCross}
\end{figure}
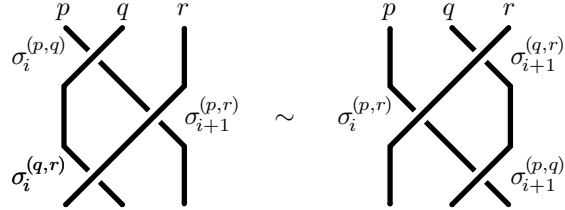

Of course, when the colour set~$\CC$ is a singleton, we can forget about colours, and the monoid~$\BP{\nn, \CC}$ is simply the $\nn$-strand monoid---which is known to be cancellative since Garside~\cite{Gar}. By contrast, for $\card\CC \ge 2$ and $\nn \ge 3$, the presentation of~\eqref{E:ColBraidPres} is not complemented (for some generators~$\ss, \ttt$, there is more than one relation of the type $\ss... = \ttt...$ in the presentation), and no simple criterion seems to apply. Here we shall prove:

\begin{prop}\label{P:CBCancel}
The monoid~$\BP{\nn, \CC}$ admits left and right cancellation.
\end{prop}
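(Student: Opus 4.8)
The plan is to verify that the presentation~\eqref{E:ColBraidPres} meets the hypotheses of Proposition~\ref{P:Crit} (for left cancellativity) and of its mirror image (for right cancellativity). Condition~\eqref{E:Wit1} is immediate: the presentation is homogeneous, since each relation pairs two words of equal length (length~$2$ for the commutation relations, length~$3$ for the braid relations), so taking $\lambda(\ww)$ to be the word length of~$\ww$ works. Condition~\ITEM3 of the criterion is also immediate by inspection: no relation of~\eqref{E:ColBraidPres} has both sides beginning with the same generator, because a commutation relation $\sigg\ii\aa \sigg\jj\bb = \sigg\jj\bb \sigg\ii\aa$ needs $|i-j|\ge 2$ (so the first letters $\sigg\ii\aa$ and $\sigg\jj\bb$ differ), and a braid relation $\sigg\ii\aa \sigg\jj\bb \sigg\ii\cc = \sigg\jj\cc \sigg\ii\bb \sigg\jj\aa$ needs $|i-j|=1$ (so again the first letters differ). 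The same observations apply on the right, so the only real work is Condition~\eqref{E:CompatS}.

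So the core of the argument is to check~\eqref{E:CompatS}: for every generator~$\ss = \sigg\kk\ww$ and every relation $\ww_1 = \ww_1'$ of~\eqref{E:ColBraidPres}, every $(\SSS,\RRR)$-grid from~$(\ss,\ww_1)$ has an equivalent grid from~$(\ss,\ww_1')$, and vice versa. I would organize this as a finite case analysis on the relative positions of the index~$\kk$ of~$\ss$ and the indices appearing in the relation $\ww_1 = \ww_1'$. Because the relations are "local" (they only involve indices within distance~$1$ of each other) and the generator~$\ss$ has a single index~$\kk$, the cases fall into a small number of configurations: either $\ss$ commutes with everything in the relation (indices all at distance~$\ge 2$ from~$\kk$), in which case the grid is trivial and symmetric; or $\kk$ is adjacent to or equal to some index in the relation, in which case one must explicitly build the two grids and check they have $\eqR$-equivalent boundary labels. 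The heavy lifting is essentially a verification that the "colored braid relation" is self-consistent under reversing --- exactly the content illustrated in Figure~\ref{F:ColorCross}, where the two sides of the relation correspond to the two ways of resolving a triple crossing --- and this is where the specific form of the colors in~\eqref{E:ColBraidPres} (the permutation-of-colors pattern $\aa\bb\cc \mapsto \cc\bb\aa$) is used.

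The main obstacle I expect is the bookkeeping in the "interacting" cases, particularly when $\ss = \sigg\kk\ee$ with $\kk$ equal to one of the two indices $\ii, \jj$ in a braid relation $\sigg\ii\aa \sigg\jj\bb \sigg\ii\cc = \sigg\jj\cc \sigg\ii\bb \sigg\jj\aa$: here reversing $\ss$ past the left-hand word produces a grid whose output involves further letters $\sigg\jj{?}, \sigg\ii{?}$ with colors determined by the braid relation, and one must check that reversing past the right-hand word yields a grid with an $\eqR$-equivalent --- not necessarily letter-for-letter identical --- output, so that a nontrivial word equivalence inside the output may have to be exhibited. Since $(\SSS,\RRR)$ is not complemented, several grids with the same source $(\ss, \ww_1)$ may exist (corresponding to different choices of color in an intermediate crossing), and one must match \emph{each} of them to some grid over $(\ss, \ww_1')$; tracking all color assignments through the $3\times 3$ (or larger) grids is the delicate part. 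Once~\eqref{E:CompatS} is confirmed in every case, Proposition~\ref{P:Crit} gives left cancellativity of~$\BP{\nn,\CC}$, and applying the mirror-image criterion --- whose hypotheses hold by the same symmetric reasoning, since the presentation~\eqref{E:ColBraidPres} is visibly invariant under reversing all words --- gives right cancellativity, completing the proof of Proposition~\ref{P:CBCancel}.
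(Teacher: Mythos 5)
Your outline reproduces the paper's strategy exactly: homogeneity of~\eqref{E:ColBraidPres} gives~\eqref{E:Wit1} via word length; no relation has both sides starting with the same letter because the two initial indices differ in every relation; the work reduces to~\eqref{E:CompatS}, organized by the position of the index of~$\ss$ relative to the indices in the relation; and right cancellativity comes from the invariance of the presentation under word reversal (the paper states this as the identity on generators inducing an anti-automorphism, which is the same observation as your ``mirror-image'' remark). So the approach is right, and the easy conditions are correctly dispatched.

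The gap is that the verification of~\eqref{E:CompatS} --- which is the entire mathematical content of the proposition --- is only described, not performed. You say you ``would organize this as a finite case analysis'' and identify the interacting cases as ``the delicate part,'' but you never exhibit the grids or the $\eqR$-equivalences of their boundary words. This cannot be waved through as routine bookkeeping. The paper isolates two genuinely critical configurations: $\sigg1\aa$ against a braid relation $\sigg2\bb\sigg3\cc\sigg2\dd = \sigg3\dd\sigg2\cc\sigg3\bb$ (Lemma~\ref{L:Case1}), and $\sigg2\aa$ against a commutation relation $\sigg1\bb\sigg3\cc = \sigg3\cc\sigg1\bb$ (Lemma~\ref{L:Case2}). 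In each, one must enumerate \emph{all} grids from the given source (they come in families parametrized by free color choices $\ee, \ff, \dots$, and one must also argue that mismatched choices cannot be completed into a grid), produce a matching grid over the other side of the relation, and then prove the bottom and right boundary words equivalent by explicit chains of relations of length up to~$5$. That this succeeds is a special property of the color pattern $\aa\bb\cc \mapsto \cc\bb\aa$: for the closely related presentation~\eqref{E:BGL}, the identical check \emph{fails} (Fact~\ref{F:BGL}), so the step you deferred is exactly the point at which a proof of this kind can break. Until those equivalences are written out, the proposition is not proved.
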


The proof consists of applying the criterion of Proposition~\ref{P:Crit}, namely considering all generators~$\sigg\ii\aa$ and all relations~$\ww \,{=}\, \ww'$ of~\eqref{E:ColBraidPres}, and checking that, for every reversing grid built from~$\sigg\ii\aa$ and~$\ww$, there exists an equivalent reversing grid built from~$\sigg\ii\aa$ and~$\ww'$, and \textit{vice versa}. We shall see that there are only two critical cases, with all other cases either reducing to them or being trivial.

\begin{lemm}\label{L:Case1}
Property~\eqref{E:CompatS} holds for $\sigg1\aa$ and the relation $\sigg2\bb \sigg3\cc \sigg2\dd = \sigg3\dd \sigg2\cc \sigg3\bb$.
\end{lemm}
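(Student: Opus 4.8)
The plan is to enumerate, up to equivalence of grids (two grids being equivalent when their four edge words are pairwise $\eqR$-equivalent), all $(\SSS, \RRR)$-grids with source $(\sigg1\aa, \ww)$ and all those with source $(\sigg1\aa, \ww')$, where $\ww := \sigg2\bb\sigg3\cc\sigg2\dd$ and $\ww' := \sigg3\dd\sigg2\cc\sigg3\bb$ are the two sides of the relation at hand, and then to set up a bijection between the two families matching equivalent grids. Since a bijection can be read both ways, this establishes both halves of~\eqref{E:CompatS} simultaneously.

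To list the grids issued from a given source I would slice them along vertical lines by means of Lemma~\ref{L:Decomp}, and along horizontal lines by means of its transposed form (reflecting a grid across its main diagonal permutes the list of elementary pieces of Definition~\ref{D:Grid} onto itself, so the transpose of a reversing grid is again one), thus reducing the question to the atomic reversings of two single generators. The latter are: a unique commutation square for $\sigg\ii{x}$ against $\sigg\jj{y}$ when $\vert i-j\vert \ge 2$; for $\vert i-j\vert = 1$, one square per colour $z$ in $\CC$, arising from the relation $\sigg\ii{x}\sigg\jj{z}\sigg\ii{y} \,{=}\, \sigg\jj{y}\sigg\ii{z}\sigg\jj{x}$; a cancellation square when the two generators coincide; and no square at all when the two generators have equal index but distinct colours, as $\RRR$ then contains no applicable relation. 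Pushing $\sigg1\aa$ through $\ww' \weq \sigg3\dd\cdot\sigg2\cc\cdot\sigg3\bb$, the two $\sigg3$-letters contribute only commutations, and one obtains that the grids from $(\sigg1\aa, \ww')$ are precisely, for $\ee, \ff$ in $\CC$, those with target $(\sigg1\ee\sigg2\ff\sigg3\aa,\ \sigg3\dd\sigg2\ee\sigg1\cc\sigg3\ff\sigg2\bb)$. Pushing $\sigg1\aa$ through $\ww \weq \sigg2\bb\cdot\sigg3\cc\cdot\sigg2\dd$, the first column uses $\sigg1\aa\sigg2\pp\sigg1\bb \,{=}\, \sigg2\bb\sigg1\pp\sigg2\aa$ with a free colour $\pp$, the second a commutation followed by $\sigg2\aa\sigg3\qq\sigg2\cc \,{=}\, \sigg3\cc\sigg2\qq\sigg3\aa$ with a free colour $\qq$, and the third opens with $\sigg1\pp\sigg2\rr\sigg1\dd \,{=}\, \sigg2\dd\sigg1\rr\sigg2\pp$; but the square immediately below the latter has top label $\sigg2\rr$ and left label $\sigg2\qq$, so by the last item above it can only be a cancellation square, which forces $\rr \weq \qq$, after which only an $\ew$-square and one further commutation remain. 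Hence the grids from $(\sigg1\aa, \ww)$ are precisely, for $\pp, \qq$ in $\CC$, those with target $(\sigg1\qq\sigg2\pp\sigg3\aa,\ \sigg2\pp\sigg1\bb\sigg3\qq\sigg2\cc\sigg1\dd)$.

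The two families are then matched by $(\ee, \ff) := (\qq, \pp)$, a bijection of $\CC^2$. For matched grids, the left edges coincide, the top edges are $\eqR$-equivalent because $\ww \,{=}\, \ww'$ is the very relation considered, the right edges coincide since $\sigg1\ee\sigg2\ff\sigg3\aa \weq \sigg1\qq\sigg2\pp\sigg3\aa$, and the equivalence of the bottom edges is the chain of $\eqR$-steps
\[
\sigg2\pp\sigg1\bb\sigg3\qq\sigg2\cc\sigg1\dd \eqR \sigg2\pp\sigg3\qq\sigg1\bb\sigg2\cc\sigg1\dd \eqR \sigg2\pp\sigg3\qq\sigg2\dd\sigg1\cc\sigg2\bb \eqR \sigg3\dd\sigg2\qq\sigg3\pp\sigg1\cc\sigg2\bb \eqR \sigg3\dd\sigg2\qq\sigg1\cc\sigg3\pp\sigg2\bb,
\]
whose four steps apply, in order, the commutation $\sigg1\bb\sigg3\qq \,{=}\, \sigg3\qq\sigg1\bb$, the relation $\sigg1\bb\sigg2\cc\sigg1\dd \,{=}\, \sigg2\dd\sigg1\cc\sigg2\bb$, the relation $\sigg2\pp\sigg3\qq\sigg2\dd \,{=}\, \sigg3\dd\sigg2\qq\sigg3\pp$, and the commutation $\sigg3\pp\sigg1\cc \,{=}\, \sigg1\cc\sigg3\pp$. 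Since equivalence of grids involves only the four edge words, this shows that every grid from $(\sigg1\aa, \ww)$ admits an equivalent grid from $(\sigg1\aa, \ww')$ and conversely, which is~\eqref{E:CompatS} for $\sigg1\aa$ and the relation $\ww \,{=}\, \ww'$.

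The main obstacle is the exhaustiveness of the list of grids issued from $(\sigg1\aa, \ww)$: one has to be certain no other grid can occur. The slightly surprising point — and the reason the two families are equinumerous — is that the third column carries no colour parameter of its own; the absence in $\RRR$ of any relation between two $\sigg2$-generators forces a cancellation square there and pins the would-be parameter $\rr$ to the value $\qq$ already fixed in the second column. Everything else is routine bookkeeping together with the four-step computation displayed above.
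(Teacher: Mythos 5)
Your proposal is correct and follows essentially the same route as the paper: an exhaustive enumeration of the grids from each source, each family parametrized by two free colours, with the key observation that the colour in the third column of the $(\sigg1\aa,\sigg2\bb\sigg3\cc\sigg2\dd)$-grid is pinned to the one already chosen (since no relation links two $\sigg2$-generators of distinct colours), followed by an explicit four-step $\eqR$-computation matching the bottom edges. Your parameters $(\pp,\qq)$ and $(\ee,\ff)=(\qq,\pp)$ correspond exactly to the paper's grids \eqref{E:Case11} and \eqref{E:Case12}, and your extra remark justifying exhaustiveness via Lemma~\ref{L:Decomp} and its transpose is a welcome (if implicit in the paper) precision.
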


\begin{proof}
We look at all possible grids from~$(\sigg1\aa, \sigg2\bb \sigg3\cc \sigg2\dd)$, and exhibit for each of them an equivalent grid from~$(\sigg1\aa, \sigg3\dd \sigg2\cc \sigg3\bb)$, and conversely. First, exhaustively inspecting the presentation shows that the valid grids from~$(\sigg1\aa, \sigg2\bb \sigg3\cc \sigg2\dd)$ are the following grids, where $\ee$ and~$\ff$ are arbitrary elements of the color set~$\CC$:
\begin{equation}\label{E:Case11}
\VR(21.5,19.5)\begin{picture}(60,0)(0,13)
\psset{yunit=1.2mm}
\pcline{->}(1,24)(19,24)\taput{$\sigg2\bb$}
\pcline{->}(21,24)(39,24)\taput{$\sigg3\cc$}
\pcline{->}(41,24)(59,24)\taput{$\sigg2\dd$}
\pcline{->}(21,12)(39,12)\taput{$\sigg3\cc$}
\pcline{->}(41,12)(49,12)\taput{$\sigg2\ff$}
\pcline{->}(51,12)(59,12)\taput{$\sigg1\dd$}
\pcline{->}(41,6)(49,6)\taput{$\ew$}
\pcline{->}(51,6)(59,6)\taput{$\sigg1\dd$}
\pcline{->}(1,0)(9,0)\tbput{$\sigg2\ee$}
\pcline{->}(11,0)(19,0)\tbput{$\sigg1\bb$}
\pcline{->}(21,0)(29,0)\tbput{$\sigg3\ff$}
\pcline{->}(31,0)(39,0)\tbput{$\sigg2\cc$}
\pcline{->}(41,0)(49,0)\tbput{$\ew$}
\pcline{->}(51,0)(59,0)\tbput{$\sigg1\dd$}
\pcline{->}(0,23)(0,1)\tlput{$\sigg1\aa$}
\pcline{->}(20,23)(20,13)\tlput{$\sigg1\ee$}
\pcline{->}(20,11)(20,1)\tlput{$\sigg2\aa$}
\pcline{->}(40,23)(40,13)\tlput{$\sigg1\ee$}
\pcline{->}(40,11)(40,7)\tlput{$\sigg2\ff$}
\pcline{->}(40,5)(40,1)\tlput{$\sigg3\aa$}
\pcline{->}(50,11)(50,7)\tlput{$\ew$}
\pcline{->}(50,5)(50,1)\tlput{$\sigg3\aa$}
\pcline{->}(60,23)(60,19)\trput{$\sigg1\ff$}
\pcline{->}(60,17)(60,13)\trput{$\sigg2\ee$}
\pcline{->}(60,11)(60,7)\trput{$\ew$}
\pcline{->}(60,5)(60,1)\trput{$\sigg3\aa$}
\put(29,7){$*$}
\put(49,21){$*$}
\end{picture}
\end{equation}
A priori, one might use different colors~$\ff, \ff'$ in the squares marked~$*$, but~$\ff \not= \ff'$ leads to a grid that cannot be completed, since there is no relation~$\sigg2\ff... \,{=}\, \sigg2{\ff'}...$ in~\eqref{E:ColBraidPres}. Now, consider the following valid grid:
\begin{equation}\label{E:Case12}
\VR(16.5,14.5)\begin{picture}(60,0)(0,10)
\psset{yunit=1.2mm}
\pcline{->}(1,18)(19,18)\taput{$\sigg3\dd$}
\pcline{->}(21,18)(39,18)\taput{$\sigg2\cc$}
\pcline{->}(41,18)(59,18)\taput{$\sigg3\bb$}
\pcline{->}(41,12)(59,12)\taput{$\sigg3\bb$}
\pcline{->}(1,0)(19,0)\tbput{$\sigg3\dd$}
\pcline{->}(21,0)(29,0)\tbput{$\sigg2\ff$}
\pcline{->}(31,0)(39,0)\tbput{$\sigg1\cc$}
\pcline{->}(41,0)(49,0)\tbput{$\sigg3\ee$}
\pcline{->}(51,0)(59,0)\tbput{$\sigg2\bb$}
\pcline{->}(0,17)(0,1)\tlput{$\sigg1\aa$}
\pcline{->}(20,17)(20,1)\tlput{$\sigg1\aa$}
\pcline{->}(40,17)(40,13)\tlput{$\sigg1\ff$}
\pcline{->}(40,11)(40,1)\tlput{$\sigg2\aa$}
\pcline{->}(60,17)(60,13)\trput{$\sigg1\ff$}
\pcline{->}(60,11)(60,7)\trput{$\sigg2\ee$}
\pcline{->}(60,5)(60,1)\trput{$\sigg3\aa$}
\end{picture}
\end{equation}
The right edges of~\eqref{E:Case11} and~\eqref{E:Case12} both yield the word~$\sigg1\ff \sigg2\ee \sigg3\aa$. For the bottom edges, using~$\eqR$ for the congruence generated by the relations of~\eqref{E:ColBraidPres}, we find
\begin{align*}
\sigg3\dd \sigg2\ff \sigg1\cc \sigg3\ee \sigg2\bb 
\eqR \sigg3\dd \sigg2\ff &\sigg3\ee \sigg1\cc \sigg2\bb
\eqR \sigg2\ee \sigg3\ff \sigg2\dd \sigg1\cc \sigg2\bb\\
&\eqR \sigg2\ee \sigg3\ff \sigg1\bb \sigg2\cc \sigg1\dd
\eqR \sigg2\ee \sigg1\bb \sigg3\ff \sigg2\cc \sigg1\dd,
\end{align*}
which shows that \eqref{E:Case11} and~\eqref{E:Case12} are equivalent grids.

Conversely, starting from~$\sigg1\aa$ and~$\sigg3\dd \sigg2\cc \sigg3\bb$, the only possible reversing grids have the form of~\eqref{E:Case12} for some~$\ee$ and~$\ff$ in~$\CC$, and then~\eqref{E:Case11} provides the expected equivalent grid.
\end{proof}

\begin{lemm}\label{L:Case2}
Property~\eqref{E:CompatS} holds for $\sigg2\aa$ and the relation $\sigg1\bb \sigg3\cc = \sigg3\cc \sigg1\bb$.
\end{lemm}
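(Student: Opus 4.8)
The plan is to verify Condition~\eqref{E:CompatS} for the generator $\sigg2\aa$ together with the commutation relation $\sigg1\bb \sigg3\cc = \sigg3\cc \sigg1\bb$, following exactly the same strategy as in Lemma~\ref{L:Case1}: enumerate all valid $(\SSS,\RRR)$-grids with source $(\sigg2\aa, \sigg1\bb\sigg3\cc)$, exhibit for each an equivalent grid with source $(\sigg2\aa, \sigg3\cc\sigg1\bb)$, and check the converse. Note that $\sigg2\aa$ interacts nontrivially with \emph{both} $\sigg1\bb$ and $\sigg3\cc$ (the index differences are $1$ in each case), so the relevant relations to reverse against are the length-three ``colored braid'' relations $\sigg2{} \sigg1{} \sigg2{} = \sigg1{} \sigg2{} \sigg1{}$ (in suitable colors) and $\sigg2{} \sigg3{} \sigg2{} = \sigg3{} \sigg2{} \sigg3{}$. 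Since reversing a single letter against a single letter using a length-three relation introduces one new letter (the pattern $(\ss, \ttt) \rev (\ttt\ss', \ss'')$ visible in Definition~\ref{D:Grid}), the grid from $(\sigg2\aa, \sigg1\bb\sigg3\cc)$ will have shape: one tile below $\sigg1\bb$ and one tile below $\sigg3\cc$, and I expect a free color parameter to appear.

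First I would compute the grid from $(\sigg2\aa, \sigg1\bb)$: reversing $\sigg2\aa$ against $\sigg1\bb$ uses the relation $\sigg2\aa\sigg1\bb\sigg2\ee = \sigg1\ee\sigg2\bb\sigg1\aa$ for an arbitrary color $\ee \in \CC$ (matching the relation shape $\sigg\ii\aa\sigg\jj\bb\sigg\ii\cc = \sigg\jj\cc\sigg\ii\bb\sigg\jj\aa$ with $\ii=2,\jj=1$), yielding $(\sigg2\aa,\sigg1\bb) \rev (\sigg1\ee\sigg2\bb, \sigg1\aa)$ with right output $\sigg1\ee\sigg2\bb$ on the left edge — but wait, the output on the right is a single column word. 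I would carefully read off: the relation contributes $\sigg1\ee$ on the top-continuation, $\sigg2\bb\sigg1\aa$ down the right side, and $\ew$ on the bottom after the first letter; the precise bookkeeping is the routine part. Then I continue reversing the resulting right-edge word against $\sigg3\cc$; since the new letters are $\sigg1\bullet$ and $\sigg2\bullet$, only the $\sigg2\bullet$ letter interacts with $\sigg3\cc$ (using a $\sigg2\sigg3\sigg2=\sigg3\sigg2\sigg3$ relation, introducing a second free color $\ff$), while $\sigg1\bullet$ commutes past $\sigg3\cc$. Symmetrically I build the grid from $(\sigg2\aa, \sigg3\cc\sigg1\bb)$ and record its outputs. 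Finally I verify that the two right-edge words coincide on the nose and that the two bottom-edge words are $\eqR$-equivalent, by a short chain of relations in $\BP{\nn,\CC}$ exactly analogous to the displayed computation in Lemma~\ref{L:Case1}; the converse direction is automatic once the output color parameters are matched up.

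The main obstacle, as in Lemma~\ref{L:Case1}, is the combinatorial bookkeeping: getting every index and every color in the grid correct, and in particular identifying which color parameters are genuinely free versus forced by the absence of relations of the form $\sigg\ii{\ff}\cdots = \sigg\ii{\ff'}\cdots$ with $\ff\neq\ff'$. I expect that, because $\vert 1 - 3\vert = 2$, the strands at positions $1$ and $3$ essentially do not see each other during the reversing, so the two halves of the grid (the $\sigg2$-vs-$\sigg1$ part and the $\sigg2$-vs-$\sigg3$ part) decouple by Lemma~\ref{L:Decomp}, and the verification reduces to two independent instances of the single length-three braid relation, each of which is the ``easy'' half of the analysis already carried out implicitly in Lemma~\ref{L:Case1}. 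Thus I anticipate a short proof: draw the two grids, note the decoupling, read off that both yield the same right edge and $\eqR$-equivalent bottom edges via one application each of the $\sigg1\sigg2\sigg1$ and $\sigg3\sigg2\sigg3$ relations, and invoke symmetry for the converse.
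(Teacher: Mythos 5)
Your overall method is exactly the paper's --- enumerate all grids from $(\sigg2\aa,\sigg1\bb\sigg3\cc)$ and from $(\sigg2\aa,\sigg3\cc\sigg1\bb)$, pair them off by their free color parameters, and check equivalence of the four edges --- but the sketch of the bookkeeping, which is the entire content of the lemma, goes wrong in two places. First, the tile with left edge $\sigg2\aa$ and top edge $\sigg1\bb$ must use a relation whose right-hand side \emph{begins with} $\sigg1\bb$, namely $\sigg2\aa\sigg1\dd\sigg2\bb = \sigg1\bb\sigg2\dd\sigg1\aa$ with $\dd$ free: the free color sits in the middle slot of the left-hand side, not where you put it, and the tile outputs $\sigg2\dd\sigg1\aa$ on the right and $\sigg1\dd\sigg2\bb$ on the bottom. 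Second, and more seriously, the grid does not close where you say it does. Reversing $\sigg2\dd$ against $\sigg3\cc$ puts the two-letter word $\sigg3\ee\sigg2\cc$ on the bottom of that tile, so the residual $\sigg1\aa$ on the left edge commutes past $\sigg3\ee$ but must then \emph{braid} with $\sigg2\cc$, creating a fourth tile and a third free color $\ff$. The grids therefore contain three braid tiles and three free parameters $\dd,\ee,\ff$ (cf.~\eqref{E:Case21} and~\eqref{E:Case22}), not ``two independent instances of the length-three relation''; the decoupling via Lemma~\ref{L:Decomp} only splits off the first column, and the second column is itself a nontrivial stack whose shape is dictated by the output of the first.

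Consequently the final verification is heavier than you anticipate. The right edges are $\sigg2\ee\sigg3\dd\sigg1\ff\sigg2\aa$ and $\sigg2\ee\sigg1\ff\sigg3\dd\sigg2\aa$: they cannot ``coincide on the nose'' (they are words in different letters), though they are $\eqR$-equivalent by one commutation, which is all that equivalence of grids requires. The bottom edges $\sigg1\dd\sigg2\bb\sigg3\ee\sigg2\ff\sigg1\cc$ and $\sigg3\ff\sigg2\cc\sigg1\ee\sigg2\dd\sigg3\bb$ need a four-step chain (two braid relations and two commutations) rather than one application of each braid relation. The strategy is sound and the converse direction is handled as you say, but as written your argument stops one tile too early and would not type-check against the presentation.
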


\begin{proof}
As in Lemma~\ref{L:Case1}, we look at all reversing grids from $(\sigg2\aa, \sigg1\bb \sigg3\cc)$, and exhibit an equivalent grid from $(\sigg2\aa, \sigg3\cc \sigg1\bb)$, and conversely. The grids from $(\sigg2\aa, \sigg1\bb \sigg3\cc)$ are the following grids, with~$\dd$, $\ee$, $\ff$ arbitrary in~$\CC$:
\begin{equation}\label{E:Case21}
\VR(18.5,16.5)\begin{picture}(50,0)(0,11)
\pcline{->}(1,24)(19,24)\taput{$\sigg1\bb$}
\pcline{->}(21,24)(49,24)\taput{$\sigg3\cc$}
\pcline{->}(21,12)(29,12)\taput{$\sigg3\ee$}
\pcline{->}(31,12)(49,12)\taput{$\sigg2\cc$}
\pcline{->}(1,0)(9,0)\tbput{$\sigg1\dd$}
\pcline{->}(11,0)(19,0)\tbput{$\sigg2\bb$}
\pcline{->}(21,0)(29,0)\tbput{$\sigg3\ee$}
\pcline{->}(31,0)(39,0)\tbput{$\sigg2\ff$}
\pcline{->}(41,0)(49,0)\tbput{$\sigg1\cc$}
\pcline{->}(0,23)(0,1)\tlput{$\sigg2\aa$}
\pcline{->}(20,23)(20,13)\tlput{$\sigg2\dd$}
\pcline{->}(20,11)(20,1)\tlput{$\sigg1\aa$}
\pcline{->}(30,11)(30,1)\trput{$\sigg1\aa$}
\pcline{->}(50,23)(50,19)\trput{$\sigg2\ee$}
\pcline{->}(50,17)(50,13)\trput{$\sigg3\dd$}
\pcline{->}(50,11)(50,7)\trput{$\sigg1\ff$}
\pcline{->}(50,5)(50,1)\trput{$\sigg2\aa$}
\end{picture}
\end{equation}
On the other hand, the following grid is also valid:
\begin{equation}\label{E:Case22}
\VR(18.5,16.5)\begin{picture}(50,0)(0,11)
\pcline{->}(1,24)(19,24)\taput{$\sigg3\cc$}
\pcline{->}(21,24)(49,24)\taput{$\sigg1\bb$}
\pcline{->}(21,12)(29,12)\taput{$\sigg1\ee$}
\pcline{->}(31,12)(49,12)\taput{$\sigg2\bb$}
\pcline{->}(1,0)(9,0)\tbput{$\sigg3\ff$}
\pcline{->}(11,0)(19,0)\tbput{$\sigg2\cc$}
\pcline{->}(21,0)(29,0)\tbput{$\sigg1\ee$}
\pcline{->}(31,0)(39,0)\tbput{$\sigg2\dd$}
\pcline{->}(41,0)(49,0)\tbput{$\sigg3\bb$}
\pcline{->}(0,23)(0,1)\tlput{$\sigg2\aa$}
\pcline{->}(20,23)(20,13)\tlput{$\sigg2\ff$}
\pcline{->}(20,11)(20,1)\tlput{$\sigg3\aa$}
\pcline{->}(30,11)(30,1)\trput{$\sigg3\aa$}
\pcline{->}(50,23)(50,19)\trput{$\sigg2\ee$}
\pcline{->}(50,17)(50,13)\trput{$\sigg1\ff$}
\pcline{->}(50,11)(50,7)\trput{$\sigg3\dd$}
\pcline{->}(50,5)(50,1)\trput{$\sigg2\aa$}
\end{picture}
\end{equation}
The right edges of~\eqref{E:Case21} and~\eqref{E:Case22} correspond to equivalent words, since we have 
$$\sigg2\ee \sigg1\ff \sigg3\dd \sigg2\aa \eqR \sigg2\ee \sigg3\dd \sigg1\ff \sigg2\aa.$$
Similarly, we find for the bottom edges
$$\sigg3\ff \sigg2\cc \sigg1\ee \sigg2\dd \sigg3\bb 
\eqR \sigg3\ff \sigg1\dd \sigg2\ee \sigg1\cc \sigg3\bb
\eqR \sigg1\dd \sigg3\ff \sigg2\ee \sigg3\bb \sigg1\cc 
\eqR \sigg1\dd \sigg2\bb \sigg3\ee \sigg2\ff \sigg1\cc.$$
Hence \eqref{E:Case21} and~\eqref{E:Case22} are equivalent grids.

Conversely, the possible reversing grids from~$(\sigg2\aa, \sigg3\cc \sigg1\bb)$ have the form of~\eqref{E:Case22} for some~$\ff$, $\ee$, and~$\dd$ in~$\CC$, and then~\eqref{E:Case21} provides the expected equivalent grid.
\end{proof}

\begin{lemm}\label{L:AllCases}
Property~\eqref{E:CompatS} holds for all~$\sigg\ii\aa$ and all relations of~\eqref{E:ColBraidPres}.
\end{lemm}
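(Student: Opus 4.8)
The plan is to establish Condition~$(\Cond)$ for every pair formed by a generator~$\sigg\kk\ee$ and a relation $\ww \weq \ww'$ of~\eqref{E:ColBraidPres}, reducing the work by symmetry and then sorting the pairs by the position of~$\kk$ relative to the two indices occurring in the relation, so that each pair is either handled trivially or falls under Lemma~\ref{L:Case1} or Lemma~\ref{L:Case2}. The first step is to record the symmetries of~\eqref{E:ColBraidPres}: it is invariant under every shift $\ii \mapsto \ii + m$ of the indices that stays within the available range, and under the reflection $\ii \mapsto \nn - \ii$; both of these map commutation relations to commutation relations and braid relations to braid relations — for the reflection, one uses that transposing the two indices in any braid relation yields another braid relation of the family. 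Since Condition~$(\Cond)$ is preserved by these operations and is symmetric in~$\ww$ and~$\ww'$, it suffices to treat one representative of each orbit.

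Now fix such a pair, and let $\{\ii, \jj\}$ be the set of indices occurring in the relation. I would distinguish the following cases. \emph{(a) $\sigg\kk\ee$ commutes with every letter of the relation}, i.e.\ $|\kk - \ii| \ge 2$ and $|\kk - \jj| \ge 2$: then every reversing step is forced to be a commutation square, so the unique grid from $(\sigg\kk\ee, \ww)$ has output $(\sigg\kk\ee, \ww)$ and likewise on the $\ww'$ side, and since $\ww \eqR \ww'$ the two grids are equivalent. \emph{(b) $\kk \in \{\ii, \jj\}$}: if $\ee$ does not match the colour of the corresponding letter, then the reversing process stalls — either at the first step, or, on the other side, at the cancellation that would be required later — so no grid exists from either source and $(\Cond)$ holds vacuously; if the colours agree, the first tile is a cancellation square and the remaining reversing is essentially forced, and a short computation of the kind performed in Lemmas~\ref{L:Case1} and~\ref{L:Case2} shows that every grid from $(\sigg\kk\ee, \ww)$ and every grid from $(\sigg\kk\ee, \ww')$ have the same output. \emph{(c) the relation is a commutation relation and $\kk$ is adjacent to exactly one of $\ii, \jj$}, say to $\ii_0$: the letter carrying the other index is a spectator, commuting with~$\sigg\kk\ee$ and with every letter produced while reversing $\sigg\kk\ee$ against the sub-word on $\ii_0$ (which uses only braid relations at $\{\kk, \ii_0\}$); so the grids from the two sources differ only by transporting this spectator to its final place, and matching them up is a direct check of the same style as in Lemma~\ref{L:Case2}.

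The remaining pairs reduce to the two critical lemmas. \emph{(d) the relation is a braid relation at $\{\ii, \ii+1\}$ and $\kk$ is adjacent to exactly one of $\ii, \ii+1$}: no index is adjacent to both, so necessarily $\kk \in \{\ii - 1,\, \ii + 2\}$; after a shift, and the reflection $\ii \mapsto \nn - \ii$ in the case $\kk = \ii + 2$, this is exactly Lemma~\ref{L:Case1}. \emph{(e) the relation is a commutation relation and $\kk$ is adjacent to both $\ii$ and $\jj$}: then $\{\ii, \jj\} = \{\kk-1, \kk+1\}$, and after a shift this is Lemma~\ref{L:Case2}. Finally I would observe that (a)--(e) cover everything: for a braid relation $\kk$ is either in $\{\ii, \ii+1\}$ (case (b)), adjacent to exactly one of them (case (d)), or commutes with both (case (a)); for a commutation relation $\kk$ is either in $\{\ii, \jj\}$ (case (b)), adjacent to both (case (e)), adjacent to exactly one (case (c)), or commutes with both (case (a)).

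The main obstacle will be making the case analysis genuinely exhaustive and the bookkeeping in cases (b) and (c) airtight: one must enumerate \emph{all} grids issuing from a given source — keeping track of which colours occurring in the tiles are free parameters and which are forced by the fact that \eqref{E:ColBraidPres} contains no relation of the form $\sigg\ii x \dots \weq \sigg\ii y \dots$ — and then pair them off, precisely as is done explicitly in the proofs of Lemmas~\ref{L:Case1} and~\ref{L:Case2}. A subsidiary point requiring care is to check that, in case (d), the reflection $\ii \mapsto \nn - \ii$ indeed carries the given braid relation to one that, after the relevant index transposition, is again a relation of~\eqref{E:ColBraidPres}, so that the reduction to Lemma~\ref{L:Case1} is valid for $\kk = \ii + 2$ and not only for $\kk = \ii - 1$.
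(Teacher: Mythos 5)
Your proposal is correct and follows essentially the same route as the paper: the same exhaustive case split on the position of the generator's index relative to the two indices of the relation (commuting with both, coinciding with one, adjacent to exactly one, adjacent to both), with the trivial commutation and cancellation cases handled directly and the two critical configurations delegated to Lemmas~\ref{L:Case1} and~\ref{L:Case2} after a shift or an index reflection. The only cosmetic difference is that you make the reflection $\ii \mapsto \nn - \ii$ explicit as an automorphism of the presentation where the paper simply appeals to symmetry, and, like the paper, the substance of cases (b) and (c) ultimately rests on enumerating the finitely many grids from each source and matching them, which you correctly identify as the remaining bookkeeping.
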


\begin{proof}
First consider the case of~$\sigg\ii\aa$ and a relation $\sigg\jj\bb \sigg\kk\cc \sigg\jj\dd \,{=}\, \sigg\kk\dd \sigg\jj\cc \sigg\kk\bb$ with, say, $\kk = \jj + 1$. For $\ii \le \jj - 2$, every grid from~$(\sigg\ii\aa, \sigg\jj\bb \sigg\kk\cc \sigg\jj\dd)$ is a commutation grid, namely a grid in which every tile corresponds to a commutation relation $\ss\ttt \,{=}\, \ttt\ss$, and then there exists an equivalent grid from~$(\sigg\ii\aa, \sigg\kk\dd \sigg\jj\cc \sigg\kk\bb)$ as shown below:
\begin{equation}\label{E:Case3}
\VR(9.5,7.5)\begin{picture}(30,0)(0,3)
\pcline{->}(1,8)(9,8)\taput{$\sigg\jj\bb$}
\pcline{->}(11,8)(19,8)\taput{$\sigg\kk\cc$}
\pcline{->}(21,8)(29,8)\taput{$\sigg\jj\dd$}
\pcline{->}(1,0)(9,0)\tbput{$\sigg\jj\bb$}
\pcline{->}(11,0)(19,0)\tbput{$\sigg\kk\cc$}
\pcline{->}(21,0)(29,0)\tbput{$\sigg\jj\dd$}
\pcline{->}(0,7)(0,1)\tlput{$\sigg\ii\aa$}
\pcline{->}(10,7)(10,1)\trput{$\sigg\ii\aa$}
\pcline{->}(20,7)(20,1)\trput{$\sigg\ii\aa$}
\pcline{->}(30,7)(30,1)\trput{$\sigg\ii\aa$}
\end{picture}
\hspace{20mm}
\begin{picture}(30,0)(0,3)
\pcline{->}(1,8)(9,8)\taput{$\sigg\kk\dd$}
\pcline{->}(11,8)(19,8)\taput{$\sigg\jj\cc$}
\pcline{->}(21,8)(29,8)\taput{$\sigg\kk\bb$}
\pcline{->}(1,0)(9,0)\tbput{$\sigg\kk\dd$}
\pcline{->}(11,0)(19,0)\tbput{$\sigg\jj\cc$}
\pcline{->}(21,0)(29,0)\tbput{$\sigg\kk\bb$}
\pcline{->}(0,7)(0,1)\tlput{$\sigg\ii\aa$}
\pcline{->}(10,7)(10,1)\trput{$\sigg\ii\aa$}
\pcline{->}(20,7)(20,1)\trput{$\sigg\ii\aa$}
\pcline{->}(30,7)(30,1)\trput{$\sigg\ii\aa$}
\end{picture}
\end{equation}
The case when we start with a grid from~$(\sigg\ii\aa, \sigg\kk\dd \sigg\jj\cc \sigg\kk\bb)$ is similar.

The case $\ii = \jj - 1$ corresponds to Lemma~\ref{L:Case1} for~$\ii = 1$, and the general case is similar, since the relations of~\eqref{E:ColBraidPres} are invariant under shifting the indices.

Next, assume $\ii = \jj$. Then a grid from~$(\sigg\ii\aa, \sigg\jj\bb \sigg\kk\cc \sigg\jj\dd)$ exists only for $\aa = \bb$, and then it is as in the left hand diagram below, in which case the right hand diagram provides an equivalent grid for $\ee = \cc$:
\begin{equation}\label{E:Case4}
\VR(13,11)\begin{picture}(30,0)(0,3)
\pcline{->}(1,8)(9,8)\taput{$\sigg\jj\bb$}
\pcline{->}(11,8)(19,8)\taput{$\sigg\kk\cc$}
\pcline{->}(21,8)(29,8)\taput{$\sigg\jj\dd$}
\pcline{->}(1,0)(9,0)\tbput{$\ew$}
\pcline{->}(11,0)(19,0)\tbput{$\sigg\kk\cc$}
\pcline{->}(21,0)(29,0)\tbput{$\sigg\jj\dd$}
\pcline{->}(0,7)(0,1)\tlput{$\sigg\ii\aa$}
\pcline{->}(10,7)(10,1)\trput{$\ew$}
\pcline{->}(20,7)(20,1)\trput{$\ew$}
\pcline{->}(30,7)(30,1)\trput{$\ew$}
\end{picture}
\hspace{20mm}
\begin{picture}(40,0)(0,6)
\pcline{->}(1,16)(19,16)\taput{$\sigg\kk\dd$}
\pcline{->}(21,16)(29,16)\taput{$\sigg\jj\cc$}
\pcline{->}(31,16)(39,16)\taput{$\sigg\kk\bb$}
\pcline{->}(21,8)(29,8)\taput{$\ew$}
\pcline{->}(31,8)(39,8)\taput{$\sigg\kk\bb$}
\pcline{->}(1,0)(9,0)\tbput{$\sigg\kk\ee$}
\pcline{->}(11,0)(19,0)\tbput{$\sigg\jj\dd$}
\pcline{->}(21,0)(29,0)\tbput{$\ew$}
\pcline{->}(31,0)(39,0)\tbput{$\ew$}
\pcline{->}(0,15)(0,1)\tlput{$\sigg\ii\aa$}
\pcline{->}(20,15)(20,9)\tlput{$\sigg\jj\ee$}
\pcline{->}(20,7)(20,1)\tlput{$\sigg\kk\aa$}
\pcline{->}(30,15)(30,9)\tlput{$\ew$}
\pcline{->}(30,7)(30,1)\tlput{$\sigg\kk\aa$}
\pcline{->}(40,15)(40,9)\trput{$\ew$}
\pcline{->}(40,7)(40,1)\trput{$\ew$}
\end{picture}
\end{equation}
In the other direction, the only possible grids from~$(\sigg\ii\aa, \sigg\kk\dd \sigg\jj\cc \sigg\kk\bb)$ correspond to the right hand diagram in~\eqref{E:Case4} with $\ee = \cc$ and~$\aa = \bb$, in which case the left diagram provides the required equivalent grid.

The case $\ii = \jj + 1 = \kk$ is symmetric to $\ii = \jj$. Similarly, the case $\ii = \jj + 2 = \kk + 1$ is symmetric to $\ii = \jj - 1$. Finally, the cases $\ii \ge \kk + 2$ are symmetric to $\ii \le \jj - 2$. 

We now consider the case of~$\sigg\ii\aa$ and a relation $\sigg\jj\bb \sigg\kk\cc \,{=}\, \sigg\kk\cc \sigg\jj\bb$ with, say, $\kk \ge\nobreak \jj +\nobreak 2$. The case $\ii \le \jj - 2$ is similar to that of~\eqref{E:Case3}, with commutation grids.

Assume $\ii = \jj - 1$. Then the grids from~$(\sigg\ii\aa, \sigg\jj\bb \sigg\kk\cc)$ are as on the left diagram below, with~$\dd$ arbitrary in~$\CC$, and the right diagram then provides the expected equivalent grid from~$(\sigg\ii\aa, \sigg\kk\cc \sigg\jj\bb )$:
\begin{equation}\label{E:Case5}
\VR(14,12)\begin{picture}(30,0)(0,7)
\pcline{->}(1,16)(19,16)\taput{$\sigg\jj\bb$}
\pcline{->}(21,16)(29,16)\taput{$\sigg\kk\cc$}
\pcline{->}(21,8)(29,8)\taput{$\sigg\kk\cc$}
\pcline{->}(1,0)(9,0)\tbput{$\sigg\jj\dd$}
\pcline{->}(11,0)(19,0)\tbput{$\sigg\ii\bb$}
\pcline{->}(21,0)(29,0)\tbput{$\sigg\kk\cc$}
\pcline{->}(0,15)(0,1)\tlput{$\sigg\ii\aa$}
\pcline{->}(20,15)(20,9)\tlput{$\sigg\ii\dd$}
\pcline{->}(20,7)(20,1)\tlput{$\sigg\jj\aa$}
\pcline{->}(30,15)(30,9)\trput{$\sigg\ii\dd$}
\pcline{->}(30,7)(30,1)\trput{$\sigg\jj\aa$}
\end{picture}
\hspace{20mm}
\begin{picture}(30,0)(0,6)
\pcline{->}(1,16)(9,16)\taput{$\sigg\kk\cc$}
\pcline{->}(11,16)(29,16)\taput{$\sigg\jj\bb$}
\pcline{->}(1,0)(9,0)\tbput{$\sigg\kk\cc$}
\pcline{->}(11,0)(19,0)\tbput{$\sigg\jj\dd$}
\pcline{->}(21,0)(29,0)\tbput{$\sigg\ii\bb$}
\pcline{->}(0,15)(0,1)\tlput{$\sigg\ii\aa$}
\pcline{->}(10,15)(10,1)\trput{$\sigg\ii\aa$}
\pcline{->}(30,15)(30,9)\trput{$\sigg\ii\dd$}
\pcline{->}(30,7)(30,1)\trput{$\sigg\jj\aa$}
\end{picture}
\end{equation}
In the other direction, the only grids from~$(\sigg\ii\aa, \sigg\kk\cc \sigg\jj\bb )$ are those shown in the right diagram of~\eqref{E:Case5} with~$\dd$ arbitrary in~$\CC$, and the left diagram then provides the expected equivalent grid.

The case $\ii = \jj$ is almost trivial: grids may exist only for $\aa = \bb$, and then they take the form
\begin{equation}\label{E:Case6}
\VR(9,8)\begin{picture}(20,0)(0,3)
\pcline{->}(1,8)(9,8)\taput{$\sigg\jj\bb$}
\pcline{->}(11,8)(19,8)\taput{$\sigg\kk\cc$}
\pcline{->}(1,0)(9,0)\tbput{$\ew$}
\pcline{->}(11,0)(19,0)\tbput{$\sigg\kk\cc$}
\pcline{->}(0,7)(0,1)\tlput{$\sigg\ii\aa$}
\pcline{->}(10,7)(10,1)\trput{$\ew$}
\pcline{->}(20,7)(20,1)\trput{$\ew$}
\end{picture}
\hspace{20mm}
\begin{picture}(20,0)(0,3)
\pcline{->}(1,8)(9,8)\taput{$\sigg\kk\cc$}
\pcline{->}(11,8)(19,8)\taput{$\sigg\jj\bb$}
\pcline{->}(1,0)(9,0)\tbput{$\sigg\kk\cc$}
\pcline{->}(11,0)(19,0)\tbput{$\ew$}
\pcline{->}(0,7)(0,1)\tlput{$\sigg\ii\aa$}
\pcline{->}(10,7)(10,1)\trput{$\sigg\ii\aa$}
\pcline{->}(20,7)(20,1)\trput{$\ew$}
\end{picture}
\end{equation}

Next, assume $\jj + 1 \le \ii \le \kk - 1$. If $\kk = \jj + 2$ holds, typically $\ii = 2$, $\jj = 1$, $\kk = 3$, we are, up to a shifting of the indices, in the situation of Lemma~\ref{L:Case2}, and so~\eqref{E:CompatS} is guaranteed. Otherwise, either $\ii$ is adjacent to exactly one of~$\jj$ or~$\kk$, and the situation is that of~\eqref{E:Case5}, or $\ii$ is at distance at least~$2$ from both~$\jj$ and~$\kk$, and the situation is that of~\eqref{E:Case4}. Finally, the cases of $\ii = \kk$, $\ii = \kk + 1$, and $\ii \ge \kk + 2$ are symmetric to those of~\eqref{E:Case6}, \eqref{E:Case5}, and~\eqref{E:Case4}, respectively. Thus, all cases have been successfully treated.
\end{proof}

We can now easily complete the proof of Proposition~\ref{P:CBCancel}:

\begin{proof}[Proof of Proposition~\ref{P:CBCancel}]
The presentation~\eqref{E:ColBraidPres} is eligible for the criterion of Proposition~\ref{P:Crit}. Indeed, all relations are of the form $\ww \,{=}\, \ww'$ with $\ww, \ww'$ of the same length. Hence the monoid~$\BP{\nn, \CC}$ is right noetherian. By Proposition~\ref{P:Crit} and Lemma~\ref{L:AllCases}, right reversing is complete for~\eqref{E:ColBraidPres}. Hence, as the presentation contains no relation contradicting left cancellation, the monoid admits left cancellation. Finally, the symmetry of the relations guarantees that the identity map on the generators induces an anti-automorphism of the monoid, and, therefore, right cancellativity automatically follows from left cancellativity. 
\end{proof}

Inspecting the proofs above shows that, in the worst cases, the combinatorial distance between the outputs of the old and the new grids is at most~$5$, so, according to the terminology sketched in Remark~\ref{R:Defect}, the defect of the presentation~\eqref{E:ColBraidPres} is~$5$, which could be used to obtain explicit upper bounds on the number or reversing steps needed to possibly establish the equivalence of words.

As mentioned in Remark~\ref{R:Embed}, our current approach says nothing about the embeddability of the involved monoid in a group. So the obvious question after Proposition~\ref{P:CBCancel} is

\begin{ques}\label{Q:Embed}
Does the monoid~$\BP{\nn, \CC}$ embed in its universal group?
\end{ques}

A classical sufficient condition is provided by Ore's theorem~\cite{Ore} stating in the current context that a cancellative monoid~$\MM$ in which any two elements admit a common right multiple embeds in its universal group, which, in addition, is then a group of right fractions for~$\MM$. This applies for instance to the monoid~$\BP\nn$. However, for~$\card\CC \ge 2$, the monoid~$\BP{\nn, \CC}$ admits no common multiple: for $\aa \not= \bb$, the elements~$\sigg1\aa$ and~$\sigg1\bb$ admit no common right (or left) multiple, since there is no valid reversing grid from~$(\sigg1\aa, \sigg1\bb)$. In~\cite{Dit}, the embeddability criterion of Ore's theorem is extended to cancellative monoids with no nontrivial invertible elements that satisfy the following ``$3$-Ore condition'':
\begin{equation}\label{E:3Ore}
\parbox{110mm}{any three elements of~$\MM$ which pairwise admit a common right multiple admit a common right multiple, and similarly for left multiples,}
\end{equation}
provided any two elements of~$\MM$ admit a left and a right gcd, \ie, greatest lower bounds with respect to left and right division. For $\card\CC \ge 2$, the monoid~$\BP{\nn, \CC}$ does not admit gcds: for instance, for~$\aa \not= \bb$, the elements~$\sigg1\aa$ and~$\sigg2\aa$ left divide both~$\sigg1\aa \sigg2\aa \sigg1\aa$ and~$\sigg1\aa \sigg2\bb \sigg1\aa$, but no common multiple of~$\sigg1\aa$ and~$\sigg2\aa$ left divides the above elements. This leads to two new questions:

\begin{ques}
Does the monoid~$\BP{\nn, \CC}$ satisfy the $3$-Ore condition~\eqref{E:3Ore}?
\end{ques} 

\begin{ques}
Is the $3$-Ore condition~\eqref{E:3Ore} sufficient for implying the embeddability of a monoid in its universal group in the case of a cancellative monoid that need not admit gcds?
\end{ques}

\subsection{A variant}

In~\cite{BGL}, the authors consider a variant of the monoid~$\BP{\nn, \CC}$ with the same generators but with a restricted list of relations:

\begin{defi}\label{D:BGL}
For $\nn \ge 1$ and $\CC$ a nonempty set, the monoid of \emph{restricted positive $\CC$-colored braids} is the monoid with presentation
\begin{equation}\label{E:BGL}
\BPr{\nn, \CC}:= \bigg\langle \sigg\ii\aa ( \ii \le \nn, \aa \in \CC ) \bigg\vert\ 
\begin{matrix}
\sigg\ii\aa \sigg\jj\bb = \sigg\jj\bb \sigg\ii\aa 
&\text{for} &\vert i-j \vert\ge 2\\
\sigg\ii\aa \sigg\jj\aa \sigg\ii\bb = \sigg\jj\bb \sigg\ii\aa \sigg\jj\aa 
&\text{for} &\vert i-j \vert = 1
\end{matrix}
\ \bigg\rangle^{\!+}.
\end{equation}
\end{defi}

All relations of~\eqref{E:BGL} are relations of~\eqref{E:ColBraidPres}, but, in the ``Yang-Baxter'' relations, the median color must be equal to one of the extremal colors. Thus the monoid~$\BP{\nn, \CC}$ is a quotient of the monoid~$\BPr{\nn, \CC}$. The authors of~\cite{BGL} ask whether the monoid~$\BPr{\nn, \CC}$ is cancellative. Frustratingly, the criterion of Proposition~\ref{P:Crit} cannot be applied:

\begin{fact}\label{F:BGL}
 If $\CC$ has at least two elements, right reversing is not complete for the presentation~\eqref{E:BGL}.
\end{fact}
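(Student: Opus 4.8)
We argue straight from the definition of completeness. By Lemma~\ref{L:RevEq} the implication \eqref{E:RevEq} holds for every presentation, so, by \eqref{E:Complete}, right reversing fails to be complete for \eqref{E:BGL} as soon as we exhibit two words $\uu,\vv\in\SSSs$ over the generators $\sigg\ii{x}$ with $\uu\eqR\vv$ (for the congruence generated by \eqref{E:BGL}) such that there is \emph{no} $(\SSS,\RRR)$-grid from $(\uu,\vv)$ to $(\ew,\ew)$. The plan is to build such a pair for a small value of $\nn$ and for two distinct colours $\aa,\bb\in\CC$ (this is the only place where $\card\CC\ge 2$ enters), and then to exclude the grid by an exhaustive inspection of the reversing process. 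Since \eqref{E:BGL} is homogeneous, \eqref{E:Wit1} holds with $\lambda(\ww)$ the length of $\ww$, so every reversing computation from $(\uu,\vv)$ terminates; it is then enough to check that none of the finitely many maximal grids with source $(\uu,\vv)$ has target $(\ew,\ew)$, equivalently that no reversing computation of $\INV\uu\,\vv$ reaches~$\ew$.

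To locate a candidate pair I would exploit the canonical projection of $\BPr{\nn,\CC}$ onto the monoid $\BP{\nn,\CC}$ of \eqref{E:ColBraidPres}: the relations of \eqref{E:BGL} form a proper subset of those of \eqref{E:ColBraidPres}, the missing ones being exactly the Yang--Baxter relations $\sigg\ii\aa\sigg\jj\bb\sigg\ii\cc=\sigg\jj\cc\sigg\ii\bb\sigg\jj\aa$ in which the median colour differs from both extremal colours. Concretely I would take a word $\ww_0$ to which two distinct relations of \eqref{E:BGL} apply at overlapping positions, let $\uu$ and $\vv$ be the two words obtained by firing these relations (so that $\uu\eqR\ww_0\eqR\vv$ is immediate), and choose the colours so that the resulting critical pair $(\uu,\vv)$ can be joined inside $\BPr{\nn,\CC}$ only through a detour that the reversing system cannot follow. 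For each maximal grid with source $(\uu,\vv)$ the obstruction is then expected to be of a single kind: the reversing word $\INV\uu\,\vv$ rewrites to a nonempty word that cannot be sorted out because it contains a junction $\INV{\sigg\ii\cc}\,\sigg\ii\dd$ at one position with $\cc\neq\dd$, while \eqref{E:BGL} has no relation $\sigg\ii\cc\ldots=\sigg\ii\dd\ldots$; hence no grid reaches $(\ew,\ew)$.

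The hard part is precisely this exhaustive check. One must be sure that \emph{every} branch of the reversing process fails, and, because each junction $\INV{\sigg\ii\cc}\,\sigg\jj\dd$ with $|i-j|=1$ admits two competing rewrites once $\card\CC\ge2$, the branching tree is not small; moreover, a careless choice of $\ww_0$ typically produces a critical pair that, after all, \emph{is} resolved by reversing, because some branch cancels completely. So $\uu$ and $\vv$ have to be chosen so that closing the equivalence genuinely forces one of the relations that belong to \eqref{E:ColBraidPres} but not to \eqref{E:BGL}. Once a suitable pair is fixed, I would also record, as a contrast and a consistency check, that the very same words are $\eqR$-equivalent in $\BP{\nn,\CC}$ and that there reversing \emph{does} detect the equivalence, by Lemma~\ref{L:AllCases} and Proposition~\ref{P:Crit}; this makes precise the assertion that the incompleteness is caused by the relations missing from \eqref{E:BGL}, and it provides the equivalence referred to in the discussion preceding this Fact.
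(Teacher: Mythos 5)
Your reduction of the statement to ``exhibit $\uu\eqR\vv$ with no grid from $(\uu,\vv)$ to $(\ew,\ew)$'' is correct, and you correctly locate the source of the trouble in the Yang--Baxter relations of~\eqref{E:ColBraidPres} whose median colour differs from both extremal colours, which are absent from~\eqref{E:BGL}. But the proposal stops exactly where the proof has to begin: you never write down the pair $(\uu,\vv)$, and you explicitly defer ``the hard part,'' namely the verification that every reversing branch from that pair fails. Since the entire content of the Fact is a concrete counterexample together with its (finite but nontrivial) verification, what you have is a search strategy, not a proof. You even acknowledge that a careless choice of the critical pair is typically resolved by reversing after all, which is precisely why the choice and the check cannot be waved at. Your guess that every branch dies in a deadlock $\INV{\sigg\ii\cc}\,\sigg\ii\dd$ with $\cc\ne\dd$ is also not what happens in the actual counterexample: grids from the problematic source do exist, they are just never \emph{equivalent} to the one coming from the other side of the relation, so the obstruction is an equivalence failure of outputs, not a mere absence of grids.

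For comparison, the paper takes $\aa\ne\bb$, $\aa\ne\cc$, exhibits the explicit grid~\eqref{E:Case12BGL} from $(\sigg1\aa,\,\sigg3\cc\sigg2\cc\sigg3\bb)$, shows by inspection that every grid from $(\sigg1\aa,\,\sigg2\bb\sigg3\cc\sigg2\cc)$ must have the shape~\eqref{E:Case11BGL}, and then kills equivalence by computing that the $\eqR$-class of the bottom word $\sigg3\cc\sigg2\aa\sigg1\cc\sigg3\bb\sigg2\bb$ of~\eqref{E:Case12BGL} contains exactly two words, neither beginning with $\sigg2\bb$; this refutes~\eqref{E:CompatS} and yields the explicit equivalence~\eqref{E:Counter} that reversing cannot detect. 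Note that the decisive step is a computation of a very small $\eqR$-class, not a large branching search over reversing computations: a well-chosen counterexample makes the exhaustive check short. To complete your argument you would need to supply such a pair and carry out the corresponding finite verification.
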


\begin{proof}
 Let $\aa, \bb, \cc$ be elements of~$\CC$ satisfying $\aa \not= \bb$ and $\aa \not= \cc$. Then Property~\eqref{E:CompatS} fails for $\sigg1\aa$ and the relation $\sigg2\bb \sigg3\cc \sigg2\cc \,{=}\, \sigg3\cc \sigg2\cc \sigg3\bb$. Indeed, we have the following valid grid
\begin{equation}\label{E:Case12BGL}
\VR(18,15)\begin{picture}(60,0)(0,10)
\psset{yunit=1.2mm}
\pcline{->}(1,18)(19,18)\taput{$\sigg3\cc$}
\pcline{->}(21,18)(39,18)\taput{$\sigg2\cc$}
\pcline{->}(41,18)(59,18)\taput{$\sigg3\bb$}
\pcline{->}(41,12)(59,12)\taput{$\sigg3\bb$}
\pcline{->}(1,0)(19,0)\tbput{$\sigg3\cc$}
\pcline{->}(21,0)(29,0)\tbput{$\sigg2\aa$}
\pcline{->}(31,0)(39,0)\tbput{$\sigg1\cc$}
\pcline{->}(41,0)(49,0)\tbput{$\sigg3\bb$}
\pcline{->}(51,0)(59,0)\tbput{$\sigg2\bb$}
\pcline{->}(0,17)(0,1)\tlput{$\sigg1\aa$}
\pcline{->}(20,17)(20,1)\tlput{$\sigg1\aa$}
\pcline{->}(40,17)(40,13)\tlput{$\sigg1\aa$}
\pcline{->}(40,11)(40,1)\tlput{$\sigg2\aa$}
\pcline{->}(60,17)(60,13)\trput{$\sigg1\aa$}
\pcline{->}(60,11)(60,7)\trput{$\sigg2\bb$}
\pcline{->}(60,5)(60,1)\trput{$\sigg3\aa$}
\end{picture}
\end{equation}
and there may exist no equivalent grid from~$(\sigg1\aa, \sigg2\bb\sigg3\cc \sigg2\cc)$. Indeed, according to what was seen in the proof of Lemma~\ref{L:Case1}, the only possible form for such a grid would be
\begin{equation}\label{E:Case11BGL}
\VR(21,18)\begin{picture}(60,0)(0,13)
\psset{yunit=1.2mm}
\pcline{->}(1,24)(19,24)\taput{$\sigg2\bb$}
\pcline{->}(21,24)(39,24)\taput{$\sigg3\cc$}
\pcline{->}(41,24)(59,24)\taput{$\sigg2\cc$}
\pcline{->}(21,12)(39,12)\taput{$\sigg3\cc$}
\pcline{->}(41,12)(49,12)\taput{$\sigg2\yy$}
\pcline{->}(51,12)(59,12)\taput{$\sigg1\cc$}
\pcline{->}(41,6)(49,6)\taput{$\ew$}
\pcline{->}(51,6)(59,6)\taput{$\sigg1\cc$}
\pcline{->}(1,0)(9,0)\tbput{$\sigg2\xx$}
\pcline{->}(11,0)(19,0)\tbput{$\sigg1\bb$}
\pcline{->}(21,0)(29,0)\tbput{$\sigg3\yy$}
\pcline{->}(31,0)(39,0)\tbput{$\sigg2\cc$}
\pcline{->}(41,0)(49,0)\tbput{$\ew$}
\pcline{->}(51,0)(59,0)\tbput{$\sigg1\cc$}
\pcline{->}(0,23)(0,1)\tlput{$\sigg1\aa$}
\pcline{->}(20,23)(20,13)\tlput{$\sigg1\xx$}
\pcline{->}(20,11)(20,1)\tlput{$\sigg2\aa$}
\pcline{->}(40,23)(40,13)\tlput{$\sigg1\xx$}
\pcline{->}(40,11)(40,7)\tlput{$\sigg2\yy$}
\pcline{->}(40,5)(40,1)\tlput{$\sigg3\aa$}
\pcline{->}(50,11)(50,7)\tlput{$\ew$}
\pcline{->}(50,5)(50,1)\tlput{$\sigg3\aa$}
\pcline{->}(60,23)(60,19)\trput{$\sigg1\yy$}
\pcline{->}(60,17)(60,13)\trput{$\sigg2\xx$}
\pcline{->}(60,11)(60,7)\trput{$\ew$}
\pcline{->}(60,5)(60,1)\trput{$\sigg3\aa$}
\end{picture}
\end{equation}
with $\xx \in \{\aa, \bb\}$ and $\yy \in \{\aa, \cc\} \cap \{\xx, \cc\}$. The equivalence of $\sigg1\aa \sigg2\bb \sigg3\aa$ and $\sigg1\yy \sigg2\xx \sigg3\aa$ would require in particular $\xx = \bb$. But, on the other hand, since no relation applies to the word $\sigg3\cc \sigg2\aa \sigg3\bb$, the equivalence class of the word $\sigg3\cc \sigg2\aa \sigg1\cc \sigg3\bb \sigg2\bb$ on the bottom edge of~\eqref{E:Case12BGL} with respect to the congruence generated by the relations of~\eqref{E:BGL} consists of two words only, namely $\sigg3\cc \sigg2\aa \sigg1\cc \sigg3\bb \sigg2\bb$ and $\sigg3\cc \sigg2\aa \sigg3\bb \sigg1\cc \sigg2\bb$, none of which begins with~$\sigg2\bb$. Hence no $(\sigg1\aa, \sigg2\bb\sigg3\cc \sigg2\cc)$-grid may be equivalent to~\eqref{E:Case12BGL}.
\end{proof}

The above negative result does not say that the monoid~$\BPr{\nn, \CC}$ is not cancellative, it just says that the criterion of Proposition~\ref{P:Crit} fails to apply. The proof of Fact~\ref{F:BGL} provides an explicit example of a valid relation that cannot be checked using reversing, namely
\begin{equation}\label{E:Counter}
\sigg2\bb \sigg3\cc \sigg2\cc \sigg1\aa \sigg2\bb \sigg3\aa \eqR \sigg1\aa \sigg3\cc \sigg2\aa \sigg1\cc \sigg3\bb \sigg2\bb,
\end{equation}
whose only proof requires introducing an intermediate word beginning with~$\sigg3\cc$, for instance $\sigg3\cc \sigg2\cc \sigg3\bb \sigg1\aa \sigg2\bb \sigg3\aa$. In other words, every van Kampen diagram witnessing~\eqref{E:Counter} must contain a vertex from which three edges start. By adding~\eqref{E:Counter} as a new (redundant) relation in the presentation, we can make the above relation eligible for factor reversing, but new obstructions are likely to appear, and it is not clear why the completion procedure thus sketched should come to an end. Thus, the following question is left open:

\begin{ques}\cite{BGL}\label{Q:BGL}
Does the monoid~$\BPr{\nn, \CC}$ admit cancellation? Does it embed in its universal group?
\end{ques}


\begin{thebibliography}{99}

\def\Reff#1; #2; #3; #4; #5; #6; #7\par{%
\bibitem{#1} #2, \emph{#3}, #4 {\bf #5} (#6) #7}

\def\Ref#1; #2; #3; #4\par{%
\bibitem{#1} #2, \emph{#3}, #4}

\Reff Adj; S.I. Adyan; On the embeddability of monoids; Soviet. Math. Dokl.; 1-4; 1960; 819--820.

\Ref BGL; A.\,Berenstein, J.\,Greenberg, and J.R.\, Li; Monomial braidings; {in preparation, http://www.wisdom.weizmann.ac.il/{$\sim$}jianrong/mon\_braid\_fin.pdf}.

\Ref Bir; J.~Birman; Braids, Links, and Mapping Class Groups; Annals of Math. Studies {\bf 82} Princeton Univ. Press (1975). 
 
\Ref ClP; A.H.\,Clifford \& G.B.\,Preston; The Algebraic Theory of Semigroups, vol.~1; Amer. Math. Soc. Surveys {\bf 7}, (1961).

\Reff Dff; P.\,Dehornoy; Groups with a complemented presentation; J. Pure Appl. Algebra; 116; 1997; 115--137.

\Reff Dgc; P.\,Dehornoy; On completeness of word reversing; Discrete Math.; 225; 2000; 93--119.

\Reff Dgp; P.\,Dehornoy; Complete positive group presentations; J. Algebra; 268; 2003; 156--197.

\Reff Dia; P.\,Dehornoy; The subword reversing method; Internat. J. Algebra Comput.; 21; 2011; 71--118.

\Reff Dit; P.\,Dehornoy; Multifraction reduction I: The 3-Ore case and Artin-Tits groups of type FC; J. Comb. Algebra; 1; 2017; 185--228.

%
\Reff Dhg; P.\,Dehornoy \& B.\,Wiest; On word reversing in braid groups; Int. J. Algebra Comput.; 16(5); 2006; 931--947.

\Reff Gar; F.A.\,Garside; The braid group and other groups; Quart. J. Math. Oxford; 20-78; 1969; 235--254.

\Ref Hig; P.\,Higgins; Techniques of semigroup theory; Oxford Univ. Press (1992).


\Reff Ore; {\O}.\,Ore; Linear equations in non-commutative fields; Ann. of Math.; 34; 1933; 480--508.

\Reff Rem; J.H.\,Remmers; On the geometry of semigroup presentations; Adv. Math.; 36; 1980; 283--296.

\end{thebibliography}
\end{document}